\documentclass[12pt]{amsart}

\usepackage[english]{babel}

\usepackage[letterpaper,top=2cm,bottom=2cm,left=3cm,right=3cm,marginparwidth=1.75cm]{geometry}

\usepackage{tikz}
\usetikzlibrary{cd}
\usepackage{amsmath,amssymb}
\usepackage{thmtools}
\usepackage{graphicx}
\usepackage[colorlinks=true, allcolors=blue]{hyperref}
\usepackage[capitalise,nameinlink]{cleveref}

\newcommand{\bC}{{\mathbb{C}}}
\newcommand{\cE}{{\mathbb{E}}}

\newcommand{\bP}{{\mathbb{P}}}
\newcommand{\bZ}{{\mathbb{Z}}}
\newcommand{\bR}{{\mathbb{R}}}
\newcommand{\boldb}{{\mathbf{b}}}

\newcommand{\cX}{{\mathcal{X}}}
\newcommand{\cY}{{\mathcal{Y}}}
\newcommand{\cO}{{\mathcal{O}}}
\newcommand{\cT}{{\mathcal{T}}}

\newcommand{\fL}{{\mathsf{L}}}
\newcommand{\fT}{{\mathsf{T}}}

\newcommand{\tD}{{\widetilde{D}}}

\newcommand{\Si}{{\Sigma}}

\newcommand{\Bl}{{\mathrm{Bl}}}
\newcommand{\Tot}{{\mathrm{Tot}}}
\newcommand{\bSi}{{\mathbf{\Sigma}}}
\newcommand{\conv}{{\mathrm{conv}}}
\newcommand{\disk}{{\mathrm{disk}}}
\newcommand{\Nbd}{{\mathrm{Nbd}}}
\newcommand{\ori}{{\mathfrak{or}}}

\def\C{\mathbb{C}}
\def\Z{\mathbb{Z}}
\renewcommand{\P}{\mathbb{P}}

\def\In{\subset}
\renewcommand{\lim}{\text{lim}}
\def\RM{\backslash}
\def\d{\partial}
\def\wt{\widetilde}

\def\congto{\xrightarrow[]{\sim}}

\DeclareMathOperator{\Fuk}{Fuk}
\DeclareMathOperator{\Coh}{Coh}

\DeclareMathOperator{\CC}{CC}

\newcommand{\Spec}{\operatorname{Spec}}

\newcommand {\Perf}{\mathrm{Perf}}
\newcommand {\Sh}{\mathrm{Sh}}
\newcommand {\Fun}{\mathrm{Fun}}

\renewcommand{\ss}{\operatorname{ss}}
\newcommand{\prl}{\mathsf{Pr}^\L}

\newcommand{\prlstw}{\prl_{\mathrm{st},\omega}}
\newcommand{\prrstw}{\mathsf{Pr}^{\R}_{\mathrm{st},\omega}}
\newcommand{\ex}{{\operatorname{ex}}}
\newcommand{\Hom}{\operatorname{Hom}}

\newcommand{\catex}{\mathsf{Cat}^{\mathrm{ex}}}
\newcommand{\catperf}{\mathsf{Cat}^{\mathrm{perf}}}
\renewcommand{\dim}{\operatorname{dim}}

\newcommand{\mush}{\mu\mathrm{Sh}}

\newcommand{\R}{\mathrm{R}}
\renewcommand{\L}{\mathrm{L}}

\newcommand{\rank}{\mathrm{rank}}

\newcommand{\cC}{\mathcal{C}}
\newcommand{\cD}{\mathcal{D}}
\newcommand{\cF}{\mathcal{F}}

\newcommand{\id}{\mathbf{id}}

\renewcommand{\lim}{\qopname\relax m{\mathbf{lim}}}

\newcommand{\cofib}{\mathbf{cofib}}
\renewcommand{\ker}{\mathbf{ker}}

\newcommand{\GG}{\mathbb{G}}
\newcommand{\PP}{\mathbb{P}}
\newcommand{\RR}{\mathbb{R}}

\newcommand{\ZZ}{\mathbb{Z}}
\renewcommand{\AA}{\mathbb{A}}

\newcommand{\sL}{\mathsf{L}}

\declaretheorem[
  name=Theorem,
  numberwithin=section,
  refname={Thm.,Thms.},
  Refname={Theorem}{Theorems}
]{theorem}

\declaretheorem[
  name=Corollary,
  sibling=theorem,
  refname={Cor.,Cors.},
  Refname={Corollary,Corollaries}
]{cor}

\declaretheorem[
  name=Proposition,
  sibling=theorem,
  refname={Prop.,Props.},
  Refname={Proposition,Propositions}
]{prop}

\declaretheorem[
  name=Lemma,
  sibling=theorem,
  refname={Lem.,Lems.},
  Refname={Lemma,Lemmas}
]{lemm}

\declaretheorem[
  name=Definition,
  sibling=theorem,
  refname={Def.,Defs.},
  Refname={Definition,Definitions}
]{defi}

\declaretheorem[
  name=Example,
  sibling=theorem,
  refname={Ex.,Exs.},
  Refname={Example,Examples}
]{exam}

\declaretheorem[
  name=Remark,
  sibling=theorem,
  refname={Rem.,Rems.},
  Refname={Remark,Remarks}
]{rem}

\title{Homological Mirror Symmetry for Conic Bundle}
\author{Bohan Fang}
\address{Bohan Fang, Beijing International Center for Mathematical Research, Peking University, 5 Yiheyuan Road, Beijing 100871, China}
\email{bohanfang@gmail.com}

\author{Yuze Sun}
\address{Yuze Sun, Beijing International Center for Mathematical Research, Peking University, 5 Yiheyuan Road, Beijing 100871, China}
\email{sunyuze@stu.pku.edu.cn}

\author{Peng Zhou}
\address{Peng Zhou, UC Berkeley, 753 Evans Hall, Berkeley, CA, 94720}
\email{pzhou.math@gmail.com}

\begin{document}

\begin{abstract}
We study the homological mirror symmetry statement where A-side is the conic bundle Hori-Vafa mirror $\mathcal Y=\{uv = f(z)\} \subset \mathbb C^2 \times (\mathbb C^*)^n$ for a Laurent polynomial $f$ in $(\mathbb C^*)^n$, and B-side is some a toric Calabi--Yau $n+2$-fold with a smooth anti-canonical divisor removed ${\mathcal X}^\circ = \mathcal X \setminus w^{-1}(-1)$. We show that when $\mathcal X$ is the canonical bundle of a toric Fano $n$-orbifold $S$ and $f$ is its Givental superpotential, the strong deformation retraction skeleton $ \mathsf{L}$ of $\mathcal Y$ in the sense of \cite{RSTZ14} has a Weinstein neighborhood $U$, such that the wrapped microlocal sheaf category $\mu\mathrm{Sh}^w_{\mathsf L}({\mathsf L})\cong \mathrm{Coh}({\mathcal X}^\circ)$. This proves a microlocal categorical version of the SYZ mirror in \cite[Thm.~1.7] {abouzaid2016lagrangian}. We also extend the definition of characteristic cycles for constructible sheaves in cotangent bundles from \cite[Ch.~IX]{KS90} to finite-rank objects in $\mu\mathrm{Sh}^w_{\mathsf L}({\mathsf L})$, and describe the characteristic cycles for objects mirror to a coherent sheaf supported on $S$.
\end{abstract}

\maketitle

\section{Introduction}
Consider the following toric homological mirror symmetry for $\P^1$, 
$$ \Coh(\P^1) \cong \mathrm{FS}( \C^*, x + 1/x). $$
We can ask, what is the mirror for its canonical bundle $K_{\P^1}$? One answer is given by Givental mirror for Fano toric variety
\begin{equation}\label{FLTZ-mirror}
    \Coh(K_{\P^1})  \cong \mathrm{FS}( \C_x^* \times \C^*_v,\;v^{-1}(x + 1/x-R)), \quad R \gg 0
\end{equation}
where the Fukaya--Seidel category $\mathrm{FS}$ is interpreted as wrapped Fukaya category of Weinstein sector with the stop given by a generic fiber of the superpotential. Another answer is
\begin{equation} \label{alt-mirror}
    \Coh(K_{\P^1})  \cong \mathrm{FS}(\cY, u), \quad \cY =  \{uv=x+1/x-R\} \In \C^2_{uv} \times \C_x,  \quad R \gg 0
\end{equation}
The two mirrors to the same space $K_{\P^1}$ comes from different SYZ fibration in the complement of different anti-canonical divisors, $w^{-1}(0)$ vs $w^{-1}(1)$, where $w: K_{\P^1} \to \C$ is the canonical function that vanishes on the toric boundary.

However, we do not work on \cref{alt-mirror} here, but we can prove a version where B-side removes a divisor and A-side removes the superpotential, thus a wrapped Fukaya category with no stop on a Weinstein manifold.
\begin{prop} \label{pp:intro-ex}
\begin{equation} \label{eq:intro-ex}
     \Coh(K_{\P^1} \RM \{w=-1\} ) \cong \Fuk(\{uv = x+1/x - R\}),  \quad R \gg 0
\end{equation}
    
\end{prop}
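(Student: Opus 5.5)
The plan is to prove \cref{eq:intro-ex} by local-to-global descent on both sides, using a two-piece cover of each space and matching the pieces, their overlap and the restriction functors.

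\emph{B-side.} Write $\cX = K_{\P^1}$. It is covered by two toric charts $\C^2_{(a_1,b_1)}$ and $\C^2_{(a_2,b_2)}$, glued along $\C \times \C^*$ by $a_2 = a_1^{-1}$, $b_2 = a_1^2 b_1$. In each chart the toric function is $w = a_i b_i$, so removing $\{w=-1\}$ leaves charts $V_i = \C^2 \RM \{a_ib_i = -1\}$. Their overlap is $V_{12} = (\C^*\times\C)\RM\{ab=-1\}$. Zariski descent gives a pullback of dg categories
\[
\Coh(\cX \RM \{w=-1\}) \simeq \Coh(V_1) \times_{\Coh(V_{12})} \Coh(V_2),
\]
with localization (restriction) functors.

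\emph{A-side.} The map $x$ exhibits $\cY = \{uv = x + 1/x - R\}$ as a conic fibration over $\C^*_x$. Its two singular fibres lie over the roots $r_1 \approx 1/R$ and $r_2 \approx R$ of $x + 1/x - R$, both on the positive real axis.
\begin{itemize}
\item I will cut $\C^*_x$ along a circle $|x| = 1$, separating $r_1$ from $r_2$, into two pieces $D_1 \ni r_1$ and $D_2 \ni r_2$. Each is an annulus containing one critical value.
\item The preimage $\cY_i$ of $D_i$, after completion, is deformation equivalent to $\{uv = x - r_i\}$ with $x \in \C^*$. Eliminating $x$, this is $\C^2_{uv} \RM \{uv = -r_i\}$, exactly the mirror shape of $V_i$.
\item The preimage of a neighbourhood of the circle $|x|=1$ is a trivial $\C^*$-conic bundle over an annulus, $T^*T^2$ up to completion. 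Its wrapped category is $\Perf(\C^*\times\C^*)$, and this should match the B-side overlap after further localizing.
\end{itemize}
I will work on the skeleton and use the wrapped microlocal sheaf category, via the Ganatra--Pardon--Shende equivalence $\Fuk \simeq \mush^w$. Then descent is a pullback of categories, along the $S^1$-symmetric skeleton: a $T^2$ over $|x|=1$ plus Lefschetz thimble cones over each $r_i$. This pullback is stated over the open cover of the skeleton and is formally parallel to the B-side formula.

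\emph{Local equivalences.} For the pieces I invoke the known equivalence $\Fuk(\C^2 \RM \{uv=-r\}) \simeq \Coh(\C^2 \RM \{ab=-1\})$. One way to see it: the skeleton is a torus with a disk attached along a circle. The torus gives $\C[a^{\pm},b^{\pm}]$, and the disk cone imposes that one coordinate extends over $0$, which recovers $V_i$. I will write this computation explicitly on the microlocal side so that it is compatible with restrictions.

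\emph{Main obstacle: the gluing data.} I must check that the two local equivalences, restricted to the common overlap piece, differ by the automorphism $(a,b)\mapsto(a^{-1},a^2b)$ of the B-side transition. Concretely, the A-side identification of the overlap $T^2$ coming from $\cY_1$ and from $\cY_2$ must differ by the monodromy of the vanishing cycle around one critical value (a Dehn twist on $T^2$). That Dehn twist should produce the shear $b \mapsto a^{2} b$, and the exponent must come out as exactly $2$, the self-intersection $-2$ of the zero section.

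I would track this by computing the images of the structure sheaves $\cO_{V_i}$ in the overlap: they are tori with their two possible framings. I would then check that the change of framing is given by the twist number $2$ prescribed by the fan of $K_{\P^1}$. Once the functors agree, the two pullback squares are equivalent, which yields \cref{eq:intro-ex}.
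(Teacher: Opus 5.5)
\emph{The gap is the A-side gluing step.} Your B-side Zariski descent is fine. In fact $V_{12}=\{a\neq 0,\ 1+ab\neq 0\}\cong(\C^*)^2$ via $(a,1+ab)$, so no further localization is needed. Each $\cY|_{D_i}$ also does complete to $\C^2\RM\{uv=-r_i\}$. But nothing you invoke glues $\Fuk(\cY)$ from these pieces and $\Fuk(T^*T^2)$ as a pullback.
\begin{itemize}
\item GPS sectorial descent does not apply. The hypersurface $\cY|_{|x|=1}$ contains the compact Lagrangian torus $T^2$ of the skeleton, so it is not sectorial. A sectorial hypersurface is $F\times\RR$ whose characteristic foliation consists of properly embedded lines. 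But the characteristic direction is tangent to any Lagrangian contained in the hypersurface, so its leaves would be trapped in the compact $T^2$.
\item What you actually have are completions of neighbourhoods of the \emph{closed} subskeleta $T^2\cup D_i$ and $T^2$. The functors between their wrapped categories are Viterbo restrictions (stop removals). There is no general theorem expressing $\Fuk(\cY)$ as a pullback along such localizations.
\item At minimum you would need an extra argument, e.g.\ that the two localizations commute. This is the mirror of the disjointness of the removed fibres $\{a_1=0\}$ and $\{a_2=0\}$, and you do not supply it.
\end{itemize}

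\emph{Passing to $\mush$ on $\fL$ does not rescue these pieces.} An open cover of $\fL$ does give descent: a pullback for the large $\mush$, a pushout for the cosheaf $\mush^w$. However, any open subset of $\fL$ containing $T^2$ also contains collars of both thimbles along their common attaching circle $C$. Near $C$, $\fL$ is locally $\Lambda_{\P^1}\times S^1$, i.e.\ zero section plus a full cotangent fibre. Consequently:
\begin{itemize}
\item The overlap piece has $\mush^w\simeq\Sh^w_{\Lambda_{\P^1}}(S^1)\otimes\Coh(\C^*)\simeq\Coh(\P^1\times\C^*)$, not $\Perf((\C^*)^2)$.
\item The piece $T^2\cup D_1\cup(\text{collar of }D_2)$ gives $\Coh$ of $\Tot(\cO_{\P^1}(-1))$ minus a divisor. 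This has $K_0\cong\Z^2$, so it is not $\Coh(V_1)$, which has $K_0\cong\Z$.
\item The corestrictions are not Zariski restrictions; in the paper's cover they are $k_\times$ and $i^*$.
\end{itemize}
This mismatch is why the paper does not use a Zariski cover. It cuts $\cY=\C^*_x\times_{\C_z}\C^2_{uv}$ along $\operatorname{Re} z$ into honest sectors, with categories $\Coh(\P^1)\otimes\Coh(\C^*)$, $\Coh(\partial\P^1)$ and $\Coh(\partial\P^1)\otimes\Coh(\C^*)$. It then identifies the resulting pushout with the Gammage--Le pushout presenting $\Coh$ of $\Bl_{\partial\P^1\times 1}(\P^1\times\C^*)$ minus the strict transform.

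Your planned gluing check would also not come out as stated. The coordinate $b$ is not a unit on $V_{12}$, so ``$b\mapsto a^2b$'' is not a torus automorphism. Since $1+w$ is a global unit on $\cX^\circ$, in the torus coordinates $(a,1+ab)$ the transition is $(a,1+ab)\mapsto(a^{-1},1+ab)$. This is an inversion with no shear, consistent with the symmetry $x\mapsto 1/x$ of $\cY$. So the $-2$ does not appear as a Dehn-twist exponent on the overlap.
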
 
This is proposed in \cite[Thm.~1.7]{abouzaid2016lagrangian}. 
\begin{proof}

We prove this result by colimit gluing on both side. On the A-side, we present the space as a fiber product over $\C$
$$ \cY=\{uv = x+1/x - R\} = \{uv + R/2 = x+1/x - R/2=z\} = \C^*_x \times_{\C_z} \C^2_{u,v} $$
Then 
$$ \cY_L:=\cY|_{z \leq 1}, \quad \cY_R=\cY|_{z \geq -1} $$ 
are Weinstein subsector of $\cY$ for appropriate choices of Weinstein structures on $\cY$. Sectorial descent in \cite{GPS2} gives pushout diagram 
$$ \begin{tikzcd}
  & \Fuk(\cY) & \\
  \Fuk(\cY_L) \ar[ur] & & \Fuk(\cY_R) \ar[ul]\\
  & \Fuk(\cY_L \cap \cY_R)\ar[ur] \ar[ul] & 
\end{tikzcd}
$$
where we can trivialize the factors in the fiber product in $\cY_L, \cY_R$ and get
\begin{equation} \label{A-side-decompose}
    \begin{cases}
     \Fuk(\cY_L) \cong \Fuk(\{x+1/x \leq R/2\} \times \{uv+R/2=0\}) \cong \Coh(\P^1) \otimes \Coh(\C^*) \\
     \Fuk(\cY_R) \cong \Fuk(\{x+1/x = R/2\} \times \{uv + R/2 \geq 0\}) \cong \Coh(\d \P^1) \otimes \Coh(\{1\}) \\
     \Fuk(\cY_L \cap \cY_R) \cong \Coh(\d \P^1) \otimes \Coh(\C^*)
\end{cases}
\end{equation}

Now, using Gammage--Le \cite[Thm.~4.13]{gammage2022mirror}, we can recognize the B-side pushout diagram as
$$ \begin{tikzcd}
  & \Coh(\Bl_{\d \P^1 \times 1}(\P^1 \times \C^*) \RM \wt{\P^1 \times \C^*}) & \\
  \Coh(\P^1 \times \C^*) \ar[ur] & & \Coh(\d \P^1 \times 1) \ar[ul]\\
  & \Coh(\d \P^1 \times \C^*)\ar[ur] \ar[ul] & 
\end{tikzcd}
$$
It is easy to check that
$$ \Bl_{\d \P^1 \times 1}(\P^1 \times \C^*) \RM \wt{\P^1 \times \C^*}\cong K_{\P^1} \setminus w^{-1}(-1). $$ 
\end{proof}

For more general canonical bundle over a compact toric Fano manifold $S$, one could also expects an equivalence between $\Coh(\cX^\circ)$ and $\Fuk(\cY)$. When $S$ is toric del Pezzo surfaces, Seidel proved a full and faithful function from $D^b\Coh_S(\cX^\circ) \hookrightarrow D\Fuk_c(\cY),$ where the left side is the (triangulated) derived category of coherent sheaves supported on $S$, while the right side is the derived Fukaya category of compact exact Lagrangians \cite{seidel10}. Gross--Matessi explored this equivalence for line bundles on $\cX^\circ$ in \cite{grossmatessi2018}.

In the paper, we try to generalize $\bP^1$ to any compact Fano orbifold $S$. There are two problems.

\begin{enumerate}

  \item It is symplectically tricky to produce two Weinstein sectors $\cY_L$ and $\cY_R$. One needs to modify their standard Liouville structures to make them sectors, and then show these modified structures could work with known toric mirror symmetry. Both of these steps are highly non-trivial.
  
  \item On the algebraic geometry side, we need to generalize Gammage--Le's argument for singular center. 
\end{enumerate}

For 1) we simply work with microlocal sheaves on a skeleton in $\cY$, and bypass this problem, while avoid claiming we deal with Fukaya categories. For 2) we will prove the desired result for a singular center on the B-side in \cref{sec:b-model}.

\begin{theorem}
  \label{thm:main}
Let $N$ be a lattice of rank $n$. Let $S$ be a compact Fano toric orbifold with toric stacky fan $\bSi$ in $N_\RR$, and with ray generators $\mathbf{b}=\{v_1,\cdots,v_m\}\In N$. Let $\cX=K_S$ be the canonical bundle of $S$ with stacky fan $\wt\Sigma\In N_\RR\oplus\RR$, and ray generators $\{(v,1)\mid v\in\mathbf{b}\}\In N\times\Z$. Let $w:\cX\to\C$ be the function corresponding to the character $(\vec 0,1)$. Let
\[
  f=\sum_{v\in\Sigma^{(1)}}a_v z^v : (\C^*)^n\cong\Hom(N,\C^*)\to\C,
\]
and let $\cY$ be the conic bundle $\{xy=f(z)-R\}$ for a generic $R>0$. When $a_v>0$ are generic, the RSTZ skeleton $\fL$ of $\cY$ is a strong deformation retract of $\cY$, and there exists a Weinstein thickening $U$ with core $\fL$ such that there is an equivalence
\[
\kappa_{\cX^\circ}: \Coh(\cX^\circ)\simeq\mush^w_\fL(\fL),
\]
where $\mush^w$ is the cosheaf of wrapped microlocal categories defined in \cite{Nad16,shende2021,nadlershende20}, and $\cX^\circ=\cX \setminus \{w=-1\}$. 
\end{theorem}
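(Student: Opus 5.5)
The plan is to mimic the colimit-gluing proof of \cref{pp:intro-ex}, working with microlocal sheaves on the skeleton rather than with Fukaya categories. First I will set up the A-side geometry. Write $\cY=\{xy=f(z)-R\}$ as the fiber product $(\C^*)^n\times_{\C_t}\C^2_{x,y}$ over $t=f(z)-R/2=xy+R/2$. Following RSTZ, take $\fL$ to be the union of two pieces. The first is the RSTZ/FLTZ-type skeleton $\fL_S$ of $(\C^*)^n$ relative to the hypersurface $f=R/2$; after tropicalization for generic $a_v$, this is the FLTZ skeleton $\Lambda_{\bSi}\subset T^*T^n$ fibered over the ``zero'' circle in $\C^*_{xy}$. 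The second is a cone over the tropical hypersurface skeleton $\fL_H$ of $\{f=R/2\}$, joined with the vanishing thimble/sphere in $\C^2$ collapsing at $xy+R/2=0$. I will prove that $\fL$ is a strong deformation retract by fiberwise retraction. I would use the RSTZ retraction of the pair of pants decomposition of $\{f=R/2\}$ together with the radial retraction of $\C^2$ conic fibers. The thickening $U$ is then built as a union of cotangent-type plumbings along these pieces, so that $\mush_\fL$ is defined, following \cite{nadlershende20}.

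Next I will decompose $\mush^w_\fL(\fL)$ as a pushout, using that $\mush^w$ is a cosheaf. Cover $\fL$ by $\fL_L$, the part over $\mathrm{Re}\,t\le 1$, and $\fL_R$, the part over $\mathrm{Re}\,t\ge -1$. Over $\fL_L$ the conic fiber is smooth, so locally $\fL_L\cong \Lambda_{\bSi}\times(\text{zero section of }T^*S^1)$. Here I will invoke the coherent-constructible correspondence (Kuwagaki / FLTZ for toric orbifolds), together with $\mush$ of a zero section, to obtain $\Coh(S)\otimes\Coh(\C^*)$. Over $\fL_R$ the base is the hypersurface skeleton times the Lagrangian disk skeleton of $\{xy+R/2\in\text{half plane}\}$, which gives $\mush^w(\fL_H)\otimes\mathrm{Mod}(k)$. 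By Gammage--Shende mirror symmetry for very affine hypersurfaces, this is $\Coh(\d S)$, where $\d S$ is the toric boundary. The overlap gives $\Coh(\d S)\otimes\Coh(\C^*)$. I must check that the restriction functors match the pullbacks $i^*$ under these equivalences, using functoriality of the Gammage--Shende and Kuwagaki equivalences with respect to the boundary inclusion.

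On the B-side I will prove the analogue of Gammage--Le's theorem for the possibly singular, non-l.c.i.\ center $\d S\times\{1\}\subset S\times\C^*$ (\cref{sec:b-model}). The statement to prove is
\[
\Coh(\cX^\circ)\simeq \Coh(S\times\C^*)\sqcup_{\Coh(\d S\times\C^*)}\Coh(\d S).
\]
This uses the identification $\cX^\circ\cong \Bl_{\d S\times 1}(S\times\C^*)\setminus\widetilde{S\times\C^*}$, which is checked torically: the coordinate $w$ and the fiber coordinate give the blowup chart. The approach is to use the open/closed decomposition of $\cX^\circ$ with closed divisor $\{w=0\}\cong S$ and its complement, and to verify generation and fully faithfulness by computing Homs via Koszul-type resolutions of $\cO_{\d S}$.

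The main obstacle is the A-side matching of gluing functors. It is not enough to know that each piece of the decomposition has the correct mirror; the restriction functors in the skeletal cosheaf must agree with the algebraic pullbacks, and this also requires orbifold and non-smooth-center versions of the cited theorems. I will try first to reduce this to the known compatibility of the coherent-constructible correspondence with restriction to toric boundary strata, handling it stratum by stratum via the Čech/stacky fan cover.
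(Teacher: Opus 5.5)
Your A-side outline matches the paper. You cover $\fL$ by $\Lambda\times S^1$ and $\d\Lambda\times D_\disk$ with overlap $\d\Lambda\times S^1\times I$, use that $\mush^w$ is a cosheaf, and identify the pieces with $\Coh(X)$, $\Coh(H)$, $\Coh(D)$ (here $X=S\times\GG_m$, $D=P\times\GG_m$, $H=P\times\{1\}$). The tools are Kuwagaki's CCC, Gammage--Shende and K\"unneth, and the gluing functors are matched via ``restriction is mirror to microlocalization''. Two corrections on that side:
\begin{enumerate}
\item The cosheaf leg $\Coh(D)\to\Coh(X)$ is $k_\times=(k^*)^{\mathrm{L}}=k_*(-)\otimes\cO_X(D)[-1]$, not a pullback.
\item $U$ does not come from plumbing. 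The paper uses Shende's Weinstein ribbon $F\subset S^*Q$ whose core is the PL Legendrian $\d\Lambda$, the associated sector $U_\Lambda$ with core $\Lambda$, and a Liouville gluing of $U_\Lambda\times T^*S^1$ with $F\times T^*D_\disk$.
\end{enumerate}
The deformation retraction needs no fiberwise argument: RSTZ's theorem for hypersurfaces in $\C^a\times(\C^*)^b$ applies directly to $\cY$, with cone $N_\RR\oplus\RR_{\ge0}^2$.

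The genuine gap is the B-side. ``Open/closed decomposition plus generation and full faithfulness via Koszul resolutions'' has nothing to compute against. A pushout in $\catperf$ is not computed objectwise, and you give no model of its morphism complexes. So you cannot test full faithfulness of a functor out of $\Coh(X)\sqcup_{\Coh(D)}\Coh(H)$. Also, $\{w=0\}$ is not $S$: $(\vec 0,1)$ pairs to $1$ with every ray of $\wt\Sigma$, so $\{w=0\}$ is the whole toric boundary, namely the zero section together with the fibers over $P$.

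The missing idea is to realize the pushout as a localization of the blow-up.
\begin{enumerate}
\item Contrary to your description, the center $H$ is lci: it is the transverse intersection of the Cartier divisors $D$ and $S\times\{1\}$.
\item Hence Orlov's blow-up formula (in Kuznetsov's generality) gives a semi-orthogonal decomposition $\Coh(\Bl_HX)=\langle\Phi\Coh(H),p^*\Coh(X)\rangle$, with $\Phi=r_*(q^*(-)\otimes\cO_E(-1))$.
\item Its gluing functor $\Phi^{\mathrm{R}}p^*[1]=i^*k^*$ factors through $\Coh(D)$, since $k^*$ is the right adjoint of $k_\times$.
\item An abstract lemma then applies: for an SOD whose gluing factors through $\mathcal{A}$ in this way, the pushout equals the quotient that inverts the canonical transformation $\varphi\colon\Phi i^*\Rightarrow p^*k_\times[1]$.
\item The cone of $\varphi_\cF$ is the pushforward of $\cF$ from $\tD\cong D$, twisted by $\cO(p^*D)$. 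These objects generate $\Coh_{\tD}(\Bl_HX)$, so the quotient is $\Coh(\Bl_HX\setminus\tD)\cong\Coh(\cX^\circ)$.
\end{enumerate}
Without this, or some other explicit model of the pushout, your B-side step does not go through.
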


The RSTZ skeleton \cite{RSTZ14} is the combinatorial description of a strong deformation retraction for an  hypersurface in $\bC^a\times (\bC^*)^b$. When there are only torus components, i.e. $a=0$, with a non-standard choice of a Weinstein structure on $\cY$, RSTZ skeleton could be made into the core of $\cY$ \cite{shende-gammage,Peng-hypersurface}.

This theorem could be thought as a version of coherent-constructible correspondence (CCC) for $\cX^\circ$. The usual coherent constructible correspondence \cite{kuwagaki20} for a toric orbifold $\cX$ is an equivalence $\kappa_\cX:\Coh(\cX)\stackrel{\sim}{\longrightarrow}\mush^w_\Lambda(\Lambda)$ where $\Lambda$ is a conical, piecewise linear Lagrangian in a cotangent bundle (FLTZ skeleton \cite{FLTZ-variety}).

\begin{exam}
For $S=\PP^1$, the FLTZ skeleton is obtained from the zero section circle
$Q\simeq S^1$ by attaching two rays at the same point in $S^1$ one ray for each toric boundary point.  Hence
$\fL$ can be drawn as a torus $Q\times S^1$ with two meridian disks attached:
\[
  \fL=
  (\Lambda_{\PP^1}\times S^1)
  \cup_{\partial\Lambda_{\PP^1}\times S^1\times I}
  (\partial\Lambda_{\PP^1}\times D_\disk),
  \qquad
  \partial\Lambda_{\PP^1}=\{p_0,p_\infty\}.
\]
See Figure \ref{fig:P^1}.
\begin{figure}[htbp]
\centering
\includegraphics[width=0.58\textwidth]{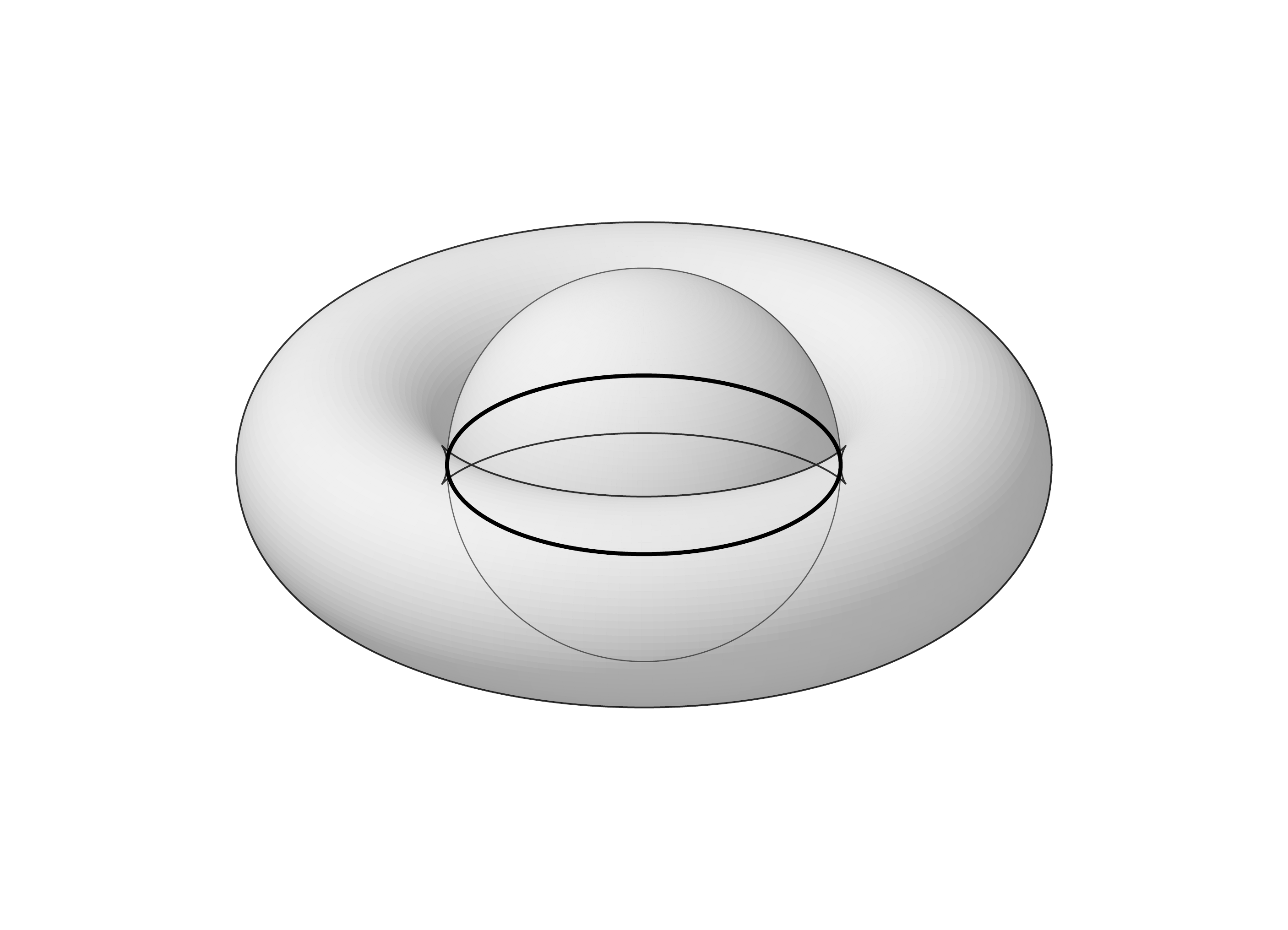}
\caption{The skeleton $\fL$ for $K_{\bP^1}$ is a torus glued with a sphere along an longitude/big circle. The torus is $Q\times S^1$ where $Q=M_\bR/M\cong S^1$. The upper and lower hemisphere of the sphere is $p_\infty \times D_{\mathrm{disk}}$ and $p_0\times D_{\mathrm{disk}}$ respectively.}
\label{fig:P^1}
\end{figure}

\end{exam}

\begin{rem}
We do not claim, but in good situation do expect $\mush^w_\fL(\fL)$ is equivalent to the wrapped Fukaya category $\Fuk(\cY)$. By \cite{GPS3}, one has to show $\fL$ is the Liouville core of $\cY$ with certain Weinstein structure. This problem is similar to identifying the piecewise combinatorial data in the mirror of $\bSi$ in terms of skeleton \cite{FLTZ-variety,FLTZDM} with the superpotential $f$. One expects to use a different Liouville structure from the standard one as an affine hypersurface, and needs to impose more condition than Fano.
See \cite{Peng-hypersurface,shende-gammage} for more details. 
\end{rem}

\begin{rem}
A general semi-projective toric Calabi-Yau orbifold $\cX$ is always derived equivalent to $K_S$ for some $S$. One simply replaces the triangulation of its defining polytope by a star triangulation.
\end{rem}

We also introduce a notion of characteristic cycles \cref{thm:cc-global-ambient} in \cref{sec:characteristic-cycles}, which is a generalization of the characteristic cycles in the cotangent bundle as defined in \cite[Ch.~IX]{KS90}. Namely, for a finite microlocal rank object $\cF$ in $\mush_\fL(\fL)$, we get a piecewise analytic Lagrangian cycle $\CC(\cF)$ in $U$. It could be regarded as topological approximation of the Lagrangian brane in $\Fuk(U)$ corresponding to $\cF$ under the microlocal-Fukaya equivalence \cite{GPS3}. We show the following theorem.

\begin{theorem}[\cref{thm:cap-formula}] 
Let $\kappa_{\cX^\circ}$ and $\kappa_S$ be the microlocal mirror CCC functors for $\cX^\circ$ and $S$ respectively. We have
\[
  \CC\bigl(\kappa_{\cX^\circ}(\iota_*G)\bigr)=
  \operatorname{Cap}\Bigl(\CC\bigl(\kappa_S(G)\bigr)\Bigr).
\]
\end{theorem}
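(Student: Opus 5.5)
We prove the cap formula by unwinding the construction of $\kappa_{\cX^\circ}$ through the gluing used for the main theorem, and checking the formula chart by chart on the skeleton. The skeleton $\fL$ decomposes, as in the $\PP^1$ example, into a piece $\Lambda_S\times S^1$ sitting over the torus part, where $\Lambda_S$ is the FLTZ skeleton of $S$, and a cap piece $\partial\Lambda_S\times D_\disk$. These are glued along a collar $\partial\Lambda_S\times S^1\times I$. On the B-side this mirrors the pushout
\[
\Coh(\cX^\circ)\simeq \Coh(S\times\C^*)\sqcup_{\Coh(\partial S\times \C^*)}\Coh(\partial S\times 1).
\]
Here $\partial S$ is the toric boundary, and we use the singular-center version of Gammage--Le proved in \cref{sec:b-model}. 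The functor $\kappa_{\cX^\circ}$ is assembled from $\kappa_S\otimes\kappa_{\C^*}$ on the first piece and $\kappa_{\partial S}$ on the cap piece.

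\textbf{Step 1: restriction of $\iota_*G$.} For $G\in\Coh(S)$, write $\iota:S\hookrightarrow\cX^\circ$ for the zero section, which sits inside the open chart $S\times\C^*$ at the point $1\in\C^*$. We compute the restriction of $\iota_*G$ to each piece of the pushout:
\begin{itemize}
\item on $S\times\C^*$ it is $G\boxtimes\cO_1$;
\item on $\partial S\times 1$ it is $G|_{\partial S}$, derived.
\end{itemize}
Applying the mirror functors, the image on the first piece is $\kappa_S(G)\boxtimes \kappa_{\C^*}(\cO_1)$. Here $\kappa_{\C^*}(\cO_1)$ is the rank-one local system on a fiber, i.e.\ a point of $S^1$ smeared along the cotangent fiber, whose characteristic cycle is a cotangent fiber $T^*_{pt}S^1$. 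On the cap piece the image is the microlocal object corresponding to $\kappa_{\partial S}(G|_{\partial S})$, extended over $D_\disk$.

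\textbf{Step 2: locality of $\CC$.} We use that $\CC$, as defined via \cref{thm:cc-global-ambient}, is local on $\fL$: on each chart it is computed from microlocal stalks, i.e.\ from ranks on the smooth strata of $\fL$. It is also multiplicative under external products, $\CC(A\boxtimes B)=\CC(A)\times\CC(B)$. Over $\Lambda_S\times S^1$ we therefore get
\[
\CC\bigl(\kappa_{\cX^\circ}(\iota_*G)\bigr)=\CC\bigl(\kappa_S(G)\bigr)\times T^*_{1}S^1 .
\]
Over the cap piece, the microlocal ranks along $\partial\Lambda_S\times D_\disk$ are determined by the microlocal ranks of $\kappa_S(G)$ at the boundary rays. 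This is the compatibility of $\kappa_S$ with restriction to $\partial S$.

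\textbf{Step 3: matching with $\operatorname{Cap}$.} The operation $\operatorname{Cap}$ is presumably defined to do exactly this: take a Lagrangian cycle in $T^*Q$ with ends on the conic part of $\Lambda_S$, form its product with the fiber, and close the ends off by the disks $\partial\Lambda_S\times D_\disk$ with the corresponding multiplicities. We then check that the two local descriptions agree on the collar $\partial\Lambda_S\times S^1\times I$, so that they glue to a single cycle. The main point is that multiplicities match on overlaps, including sign and orientation conventions of $\ori$.

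\textbf{Main obstacle.} We expect the hardest step to be the cap piece. There one must check that the restriction $\kappa_S(G)\mapsto\kappa_{\partial S}(G|_{\partial S})$ really computes the microlocal stalks of $\kappa_S(G)$ along the ends of $\Lambda_S$. This has to be shown compatibly with how the collar identifies $S^1\times I$ with a neighborhood of $\partial D_\disk$, and it requires care at the singular (orbifold or non-transverse) strata of $\partial\Lambda_S$. We would first try to verify it for line bundles $G=\cO(D)$, where the microlocal ranks are explicit, and then extend by additivity of $\CC$ in exact triangles. This extension works because $\CC$ is additive on $K_0$ and line bundles generate $\Coh(S)$ for Fano $S$.
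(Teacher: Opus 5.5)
You have the same architecture as the paper: restrict $\kappa_{\cX^\circ}(\iota_*G)$ to the two pieces $\Lambda\times S^1$ and $\partial\Lambda\times D_\disk$, use locality and multiplicativity of $\CC$, then glue along the collar. However, two of your concrete computations are wrong.

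\textbf{First error: the circle factor.} Under $\Coh(\GG_m)\simeq\mush^w_{S^1}(S^1)\simeq\Perf(k[t^{\pm1}])$, the sheaf $\cO_1=k[t^{\pm1}]/(t-1)$ is the constant rank-one local system on the zero-section circle. Its characteristic cycle is $[0_{S^1}]$, not a cotangent fiber. In Fukaya terms the cotangent fiber is mirror to the wrapped generator $\cO_{\GG_m}$, which does not even have finite microlocal rank. More basically, every object of $\mush_\fL$ is microsupported in $\fL$. So your Step 2 cycle $\CC(\kappa_S(G))\times T^*_1S^1$ is not supported on $\fL$ and is not a section of $\mathcal Z_{\fL,\nu_\fL}$.

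Your guess for $\operatorname{Cap}$ (product with a fiber, then close off by disks) fails for the same reason. The disk $D_\disk$ caps the \emph{circle} factor through the collar $S^1\times I\subset D_\disk$. The correct local pieces are $c_\alpha\boxtimes[0_{S^1}]$ on $\Lambda_\alpha\times S^1$ and $c^\infty_\alpha\boxtimes[0_{D_\disk}]$ on $\Lambda^\infty_\alpha\times D_\disk$. They glue because $0_{D_\disk}|_{S^1\times I}=0_{S^1}\times 0_I$.

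\textbf{Second error: the restrictions and the sign.} $X=S\times\GG_m$ is not an open chart of $\cX^\circ$; only $X\setminus D$ is open. The functors in \eqref{eqn:B-side-colim} are $j^*p^*[1]$ and $j^*\Phi$, not restrictions. Restricting a microlocal sheaf to a piece of $\fL$ is right adjoint to the cosheaf corestriction. So on the B-side you must apply the right adjoints $p_*j_*[-1]$ and $\Phi^\R j_*=q_*r^*j_*[-1]$. To evaluate them, use that the zero section is the strict transform of $S\times\{1\}$: it is disjoint from $\tD$ and meets $E$ in a copy of $H$. This yields $s_*G[-1]$ and $i_P^*G[-1]$, not $G\boxtimes\cO_1$ and $G|_{\partial S}$.

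The shift $[-1]$ is exactly the source of the minus sign in the paper's definition $\rho_\fL\bigl(\operatorname{Cap}(c)\bigr)=\bigl(-\theta_{\alpha,P}(\operatorname{cap}_\alpha(c_\alpha))\bigr)_\alpha$. Without it you get the opposite sign. The sign cannot be absorbed into orientation conventions, because those are fixed independently by $\theta_{\alpha,P}$.

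With these two corrections, the step you flag as the main obstacle is handled as in the paper, by Gammage--Shende's ``restriction is mirror to microlocalization'' \cite[\S7.2]{shende-gammage}. The reduction to line bundles is then unnecessary, since the argument is uniform in $G$.
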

The notion $\CC$ on right side is the usual Kashiwara--Schapira's characteristic cycle, 
and the $\mathrm{Cap}$ operation glues its infinity boundary with a disk
(\cref{sec:cap}).

\subsection{Structure of the paper}

We review the categorical notion in \cref{sec:notion}. In \cref{sec:setup}, we introduce the definition of $\cX=K_S$ for a compact toric Fano orbifold $S$, and define $\cX^\circ$ and the Hori-Vafa mirror $\cY$. We furthre compute the RSTZ skeleton of $\cY$ and construct the A-model microlocal sheaf category for the skeleton.

In Section \cref{sec:b-model}, we study the category of coherent sheaves on $\cX^\circ$ and prove a pushout diagram \cref{B-side-colim}. In Section \cref{sec:cap}, by identifying the pushout diagrams from A and B-sides, we prove the equivalence $\kappa_{\cX^\circ}$.

In  \cref{sec:characteristic-cycles}, we provide a general construction of characteristic cycles for $\mush^c$, and prove the cap formula \cref{thm:cap-formula} for characteristic cycles of a coherent sheaf supported on the base.

\subsection*{AI statements}

The work of this paper is substantially assisted by AI. We mainly use the Web version of \textsf{GPT-5 Pro}. In particular, the construction of the characteristic cycles was proposed by \textsf{GPT}. With heavy human correction and prompts, we eventually obtained the current definition (\cref{thm:cc-global-ambient}). The formula of the characteristic cycle \cref{thm:cap-formula}  was also proposed by \textsf{GPT}, but the AI did not produce any repairable proof.

All AI generated texts are reviewed and revised by the authors. Some sample interaction with \textsf{GPT} is at \url{https://github.com/bohanfang/hms-toric-cy-sheaf-ai-inquiry-sample}.

\subsection*{Acknowledgments} Bohan Fang would like to thank Chiu-Chu Melissa Liu, Song Yu and Zhengyu Zong for valuable discussion. Yuze Sun would like to thank Tatsuki Kuwagaki and Yixiao Li for helpful discussion. The work of BF and YS are partially supported by by National Key R\&D Program of China 2023YFA1009803, NSFC 12125101, NSFC 11890661 and NSFC 11831017.

\section{Categorical notion} 
\label{sec:notion}

In this section we recall the microlocal sheaf formalism in cotangent bundles and on Weinstein manifolds.
Our conventions follow Kashiwara--Schapira \cite{KS90}, Nadler \cite{Nad16}, Nadler--Shende \cite{nadlershende20}, and Ganatra--Pardon--Shende \cite{GPS3}.
Throughout we fix an algebraically closed field $k$ of characteristic $0$.
\smallskip

\begin{defi}\label{def:categorical-conventions}
\begin{enumerate}
    \item Let $\prlstw$ be the category of compactly generated stable categories and colimit-preserving exact functors which preserve compact objects.
    \item Let $\prrstw$ be the same objects, with morphisms those functors which admit both left and right adjoints.
    \item Let $\widehat{\catex}$ be the category of (not necessarily small) stable categories and exact functors.
    \item Let $\catperf$ be the category of essentially small idempotent complete stable categories and exact functors.
\end{enumerate}
\end{defi}

\smallskip

\begin{defi}[Sheaves and singular support {\cite[Ch.~V]{KS90}}]\label{def:sheaves-and-ss}
Let $M$ be a real analytic manifold.
\begin{enumerate}
\item We write $\Sh(M)$ for the $k$-linear category of sheaves of $k$-modules on $M$.
\item For a closed conic subset $K\subset T^*M$, we write $\Sh_K(M)\subset \Sh(M)$ for the full subcategory of objects with singular support contained in $K$.
\item We write $\Sh^c(M)\subset \Sh(M)$ for the full subcategory of constructible objects with perfect stalks, and $\Sh^c_K(M):=\Sh^c(M)\cap \Sh_K(M)$.
\end{enumerate}
\end{defi}

For a compactly generated category $\cC$ we write $\cC^{\omega}$ for its full subcategory of compact objects.
When $\Sh_K(M)$ is compactly generated, we also write $\Sh^w_K(M):=(\Sh_K(M))^\omega$.

\smallskip

\begin{defi}[Microlocalization on cotangent bundles {\cite[\S3]{Nad16}, \cite[\S6]{nadlershende20}}]\label{def:mush-cotangent}
Let $\mathrm{Op}^{\RR_{>0}}_{T^*M}$ be the poset of $\RR_{>0}$-invariant open subsets of $T^*M$.\begin{enumerate}
    \item The large microlocal presheaf is
\[
  \mush^{\mathrm{pre}}:\ \Omega\longmapsto
  \Sh(M)\big/\Sh_{T^*M\setminus \Omega}(M),
\]
viewed as a functor $\mathrm{Op}^{\RR_{>0}}_{T^*M}\to \widehat{\catex}$.
Its sheafification is denoted $\mush$.
    \item The finite microlocal-rank subcategory is the full subsheaf
\[
  \mush^c\subset \mush
\]
of objects whose microstalks are perfect complexes.
    \item The wrapped microlocal category is obtained by taking compact objects:
\[
  \mush^w(\Omega):=\bigl(\mush(\Omega)\bigr)^{\omega}.
\]
\end{enumerate}
\end{defi}

\begin{defi}\label{def:ss-mush}
Let $\Omega\subset T^*M$ be conic open and let $F\in \mush(\Omega)$.
For a point $p\in \Omega$ write $F_p\in \mush_p$ for the stalk of the sheaf of categories $\mush$ at $p$.
Define the microsupport of $F$ inside $\Omega$ by
\[
  \ss(F):=\{p\in \Omega\mid F_p\neq 0\in \mush_p\}.
\]
\end{defi}

\smallskip

\begin{defi}\label{def:subanalytic-isotropic}
Let $(X,\omega)$ be a real analytic symplectic manifold.
A closed subset $\Lambda\subset X$ is called \emph{subanalytic isotropic} if it is subanalytic and admits a locally finite Whitney stratification
$\Lambda=\bigsqcup_\alpha \Lambda_\alpha$
such that each stratum $\Lambda_\alpha$ is a smooth isotropic submanifold, namely $\omega|_{T\Lambda_\alpha}=0$.
\end{defi}

\begin{defi}\label{def:isotropic-in-cosphere}
Let $S^*M$ be the cosphere bundle with its standard contact structure.
A closed subset $\Lambda^\infty\subset S^*M$ is called \emph{subanalytic isotropic} if its conic lift
\[
  \widehat{\Lambda^\infty}:=\bigl(\RR_{>0}\cdot \Lambda^\infty\bigr)\cup M\subset T^*M
\]
is a conic subanalytic isotropic subset in the sense of \cref{def:subanalytic-isotropic}.
\end{defi}
\begin{defi}[Microlocal categories with support condition {\cite[\S3]{Nad16}, \cite[\S6]{nadlershende20}}]\label{def:mush-Lambda}
Let $\Lambda^\infty\subset S^*M$ be a closed subset.
Define a full subsheaf $\mush_{\Lambda^\infty}\subset \mush$ by
\[
  \mush_{\Lambda^\infty}(\Omega)
  :=
  \{F\in \mush(\Omega)\mid \ss(F)\subset \widehat{\Lambda^\infty}\cap \Omega\}
\]
for conic open $\Omega\subset T^*M$.
Set
\[
  \mush^c_{\Lambda^\infty}(\Omega):=\mush^c(\Omega)\cap \mush_{\Lambda^\infty}(\Omega),
  \qquad
  \mush^w_{\Lambda^\infty}(\Omega):=\bigl(\mush_{\Lambda^\infty}(\Omega)\bigr)^{\omega}.
\]
\end{defi}

Equivalently, one can start from the presheaf
\[
  \mush^{\mathrm{pre}}_{\Lambda^\infty}:\ \Omega\longmapsto
  \Sh_{(T^*M\setminus \Omega)\cup \widehat{\Lambda^\infty}}(M)\big/\Sh_{T^*M\setminus \Omega}(M)
\]
and then sheafify.

\begin{prop}[{\cite[\S3]{Nad16}, \cite[\S6]{GPS3}}]\label{thm:basic-mush-cotangent}
Assume $\Lambda^\infty\subset S^*M$ is subanalytic isotropic.
\begin{enumerate}
  \item The sheafification of the presheaf $\mush^{\mathrm{pre}}_{\Lambda^\infty}$ is $\mush_{\Lambda^\infty}$.
  \item For any conic open $\Omega\subset T^*M$, the category $\mush_{\Lambda^\infty}(\Omega)$ is compactly generated, and restriction functors admit both left and right adjoints. That is, $\mush_{\Lambda^\infty}$ takes values in $\prrstw$.
  \item The assignment $\Omega\mapsto \mush^c_{\Lambda^\infty}(\Omega)$ is a sheaf of stable categories.
\end{enumerate}
\end{prop}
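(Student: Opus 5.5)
The three assertions are standard consequences of Kashiwara--Schapira microlocal theory combined with the results of \cite{Nad16,nadlershende20,GPS3}, and I will establish them in order (1), (2), (3).

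For (1), I first check that $\mush^{\mathrm{pre}}_{\Lambda^\infty}$ is a full sub-presheaf of $\mush^{\mathrm{pre}}$. The inclusion $\Sh_{(T^*M\setminus\Omega)\cup\widehat{\Lambda^\infty}}(M)\subset \Sh(M)$ induces a fully faithful functor on Verdier quotients by $\Sh_{T^*M\setminus\Omega}(M)$. Full faithfulness holds because morphisms in the quotient are computed by colimits over maps whose cones have singular support off $\Omega$. Sheafification preserves full subpresheaves cut out by a local condition on objects. The condition $\ss(F)\cap\Omega\subset\widehat{\Lambda^\infty}$ is local in $\Omega$, and on the presheaf level it agrees with the image of $\mush^{\mathrm{pre}}_{\Lambda^\infty}$. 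Indeed, any object of the big quotient whose microsupport in $\Omega$ lies in $\widehat{\Lambda^\infty}$ is locally represented by a sheaf with $\ss\subset (T^*M\setminus\Omega)\cup\widehat{\Lambda^\infty}$. To see this, I apply the microlocal cut-off lemma \cite[Ch.~VI]{KS90} on small conic neighborhoods, which uses that $\Lambda^\infty$ is subanalytic isotropic. Comparing stalks then identifies the two sheafifications.

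For (2), compact generation of $\Sh_K(M)$ for $K=(T^*M\setminus\Omega)\cup\widehat{\Lambda^\infty}$ closed conic is proved as in \cite{GPS3,Nad16}. The inclusion $\Sh_K(M)\hookrightarrow\Sh(M)$ preserves limits and colimits, because singular support conditions are closed under both when $K$ is subanalytic. So it has a left adjoint, and the images of the compact generators of $\Sh(M)$ compactly generate $\Sh_K(M)$. The presheaf values are therefore Verdier quotients of compactly generated categories by compactly generated subcategories, hence lie in $\prlstw$.

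Restriction functors are quotients between such categories. They preserve colimits, and they preserve limits because both support conditions are closed under limits. Hence they admit left adjoints preserving compacts and right adjoints, so the presheaf lands in $\prrstw$. Sheafification can then be computed as a limit in $\prrstw$. Limits there agree with limits in $\prlstw$ after passing to adjoints, via the equivalence $\prlstw\simeq(\prrstw)^{op}$ on underlying objects. So $\mush_{\Lambda^\infty}(\Omega)$ remains compactly generated and its restrictions keep both adjoints.

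For (3), $\mush^c_{\Lambda^\infty}$ is the full subsheaf of objects with perfect microstalks. Microstalks at smooth points of $\Lambda$ are detected locally. Hence gluing of objects and morphisms follows from the sheaf property of $\mush_{\Lambda^\infty}$, since a full subcategory defined by a local condition inherits descent. Stability holds because perfectness of stalks is preserved under cones.

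The main obstacle I expect is the step in (1): showing that the local presentation via cut-off works uniformly. This requires controlling the boundary of $\Omega$ near $\widehat{\Lambda^\infty}$. I would try to reduce to small balls in which $\widehat{\Lambda^\infty}$ is a finite union of conormal strata, and then invoke \cite[\S3]{Nad16}.
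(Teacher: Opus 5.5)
Your outline for (1) and (3) follows the paper's. The argument you give for compact generation in (2), however, has a genuine gap.

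You get compact generators of $\Sh_K(M)$, with $K=(T^*M\setminus\Omega)\cup\widehat{\Lambda^\infty}$, as images of ``the compact generators of $\Sh(M)$'' under the left adjoint of the inclusion. But $\Sh(M)$ is not compactly generated: already for $M=\RR^n$ with $n\geq 1$ it has no nonzero compact objects (Neeman). Nor can the presheaf values be treated as Verdier quotients of compactly generated categories by compactly generated subcategories. The set $K$ is neither subanalytic nor isotropic. Moreover, $\Sh_K(M)$ and $\Sh_{T^*M\setminus\Omega}(M)$ contain every sheaf supported on a closed $Z\subset M$ with $T^*M|_Z\subset T^*M\setminus\Omega$, so there is no reason for either to be compactly generated. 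Your later step (computing the sheafification as a limit in $\prrstw$ and concluding that $\mush_{\Lambda^\infty}(\Omega)$ is compactly generated) therefore has no local input. Note that your write-up never uses the hypothesis that $\Lambda^\infty$ is subanalytic isotropic in (2), which is exactly where it is needed.

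The missing idea is constructibility. Choose a Whitney stratification $\mathcal S$ of $M$ with $\widehat{\Lambda^\infty}\subset\bigcup_{X\in\mathcal S}T^*_XM$. Then $\Sh_{\widehat{\Lambda^\infty}}(M)\subset\Sh_{\mathcal S}(M)\simeq\Fun(\mathrm{Exit}(\mathcal S),\mathrm{Mod}_k)$, and the latter is compactly generated by corepresentatives of stalks. Inside it, $\Sh_{\widehat{\Lambda^\infty}}(M)$ is cut out by the vanishing of microstalk functors at generic points of the conormals lying outside $\widehat{\Lambda^\infty}$. These functors are corepresented by compact objects and commute with limits and colimits. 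Hence the inclusion has both adjoints, and the left adjoint sends compact generators to compact generators. The microlocal version on a conic open $\Omega$ needs a local argument of the same kind. The paper does not prove this itself: it imports it from \cite[Lem.~3.15]{Nad16}, and uses \cite{kuo_Spherical} for the fact that restriction functors preserve products and coproducts.

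Separately, the ``main obstacle'' you identify in (1) is not one. If $\ss(F)\cap\Omega\subset\widehat{\Lambda^\infty}$, then trivially $\ss(F)\subset(T^*M\setminus\Omega)\cup\widehat{\Lambda^\infty}$, so no cut-off lemma is needed. The only real input is the stalk identification $\mush_p\simeq\mush^{\mathrm{pre}}_p\simeq\Sh(M)/\Sh_{T^*M\setminus\RR_{>0}\cdot p}(M)$. This makes the microsupport of \cref{def:ss-mush} agree with $\ss(G)\cap\Omega$ for any local representative $G$, and it is how the paper argues.
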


\begin{proof}
(1) For an object $F\in \mush^{\mathrm{pre}}(\Omega)$, the set $\ss_\Omega(F):=\ss(F)\cap \Omega$ is well-defined.
Then \cite[Thm.~6.1.2]{KS90} identifies
\[
  \mush_p=\mush^{\mathrm{pre}}_p=\Sh(M)\big/\Sh_{T^*M\setminus \RR_{>0}\cdot p}(M).
\]

(2) $\mush_{\Lambda^\infty}(\Omega)$ is compactly generated by \cite[Lem.~3.15]{Nad16}; the restriction functors preserve products and coproducts by \cite{kuo_Spherical}.

(3) This is local on $\Omega$.
In each cotangent chart it is the traditional microlocal sheaf of objects with perfect microstalks, which is a sheaf by the usual microlocal descent construction.
\end{proof}

Let $\Lambda\subset T^*M$ be a closed conic subset containing the zero section.
By abuse of notation, we also write $\mush_\Lambda$, $\mush^c_\Lambda$, and $\mush^w_\Lambda$ for the categories associated with $\Lambda^\infty:=\Lambda\cap S^*M$. As they are supported on $\Lambda$, we also regard them as sheaves on $\Lambda$.

\begin{prop}\label{prop:mush-restrict-zero}
There are canonical equivalences
\[
  \mush_{\Lambda}(\Lambda)\simeq \Sh_{\Lambda}(M),
  \qquad
  \mush^c_{\Lambda}(\Lambda)\simeq \Sh^c_{\Lambda}(M),
  \qquad
  \mush^w_{\Lambda}(\Lambda)\simeq \Sh^w_{\Lambda}(M).
\]
\end{prop}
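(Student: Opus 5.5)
The plan is to evaluate the sheaf of categories $\mush_\Lambda$ on the whole of its support $\Lambda$, which we regard as the conic open set $\Omega=T^*M$, and compare the sheafification with the presheaf. Since $\Lambda$ is closed and the sheaf is supported on $\Lambda$, we have $\mush_\Lambda(\Lambda)=\mush_{\Lambda^\infty}(T^*M)$. On $\Omega=T^*M$ the presheaf of \cref{def:mush-Lambda} reads
\[
  \mush^{\mathrm{pre}}_{\Lambda^\infty}(T^*M)=\Sh_{\widehat{\Lambda^\infty}}(M)\big/\Sh_{\emptyset}(M)=\Sh_{\Lambda}(M),
\]
because $T^*M\setminus T^*M=\emptyset$ and the only sheaf with empty singular support is $0$ (KS). Here we use that $\Lambda$ is conic and contains the zero section, so $\widehat{\Lambda^\infty}=\Lambda$.

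The main step is to show that the canonical map from presheaf to sheafification, $\Sh_\Lambda(M)\to \mush_\Lambda(T^*M)$, is an equivalence. The cleanest route is a direct identification of global sections. An object of $\mush_\Lambda(T^*M)$ is a compatible family of local objects $F_i\in \Sh(M)/\Sh_{T^*M\setminus\Omega_i}(M)$, one for each member of a conic open cover $\{\Omega_i\}$. We then argue in two steps.

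\begin{enumerate}
\item Restricting to the zero section part gives the cover $\{\Omega_i\cap T^*_MM\}$ of $M$. On $\Omega=T^*M|_V$ with $V\subset M$ open, the presheaf quotient is $\Sh(M)/\Sh_{T^*M|_{M\setminus V}}(M)\simeq \Sh(V)$. This holds because $\ss(F)\subset T^*M|_{M\setminus V}$ if and only if $F|_V=0$.
\item The sheaf $V\mapsto \Sh_{\Lambda}(V)$ satisfies descent on $M$, since singular support is local. It therefore suffices to show that the sheafification of the presheaf on conic opens agrees with the pullback along $T^*M\to M$ of the sheaf $V\mapsto\Sh_\Lambda(V)$, evaluated on opens of the form $T^*M|_V$.
\end{enumerate}

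By (1) of \cref{thm:basic-mush-cotangent}, the sheafification is computed stalkwise via $\mush_p=\Sh(M)/\Sh_{T^*M\setminus\RR_{>0}p}(M)$. The difficulty is that conic opens need not contain the zero section fiberwise, so we must show that opens of the form $T^*M|_V$ are cofinal for computing global sections over all of $T^*M$. This is automatic, because any conic open cover of $T^*M$ can be refined to include opens $T^*M|_{V}$ that cover the zero section. Each such open is itself a presheaf object where descent already holds. Combining this with step (1), global sections become $\lim_V \Sh_\Lambda(V)=\Sh_\Lambda(M)$.

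I expect the main obstacle to be the claim in step (2) that the presheaf is already a sheaf on the opens $T^*M|_V$ and their intersections. This amounts to the identity $\Sh_{\Lambda}(V)=\Sh_{(T^*M\setminus T^*M|_V)\cup\Lambda}(M)/\Sh_{T^*M|_{M\setminus V}}(M)$. I would prove it using the functor $F\mapsto F|_V$ together with $j_*$ as a section, and the microsupport estimate for restriction to open sets from KS Ch.~V.

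With the large statement established, the other two equivalences follow formally. For the finite-rank version, microstalks of $F\in\Sh_\Lambda(M)$ are perfect if and only if $F$ is constructible with perfect stalks, since $\Lambda$ is subanalytic (KS Ch.~VIII). This gives $\mush^c_\Lambda(\Lambda)\simeq\Sh^c_\Lambda(M)$. For the wrapped version, we take compact objects on both sides of the equivalence $\mush_\Lambda(\Lambda)\simeq\Sh_\Lambda(M)$; both sides are compactly generated by (2) of \cref{thm:basic-mush-cotangent}, and by definition $\Sh^w_\Lambda(M)=(\Sh_\Lambda(M))^\omega$.
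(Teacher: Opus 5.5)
Your proposal follows essentially the same route as the paper. Both arguments observe that on the opens $T^*V$ the presheaf is $\Sh(V)$ (resp.\ $\Sh_\Lambda(V)$), which already satisfies descent, and that these opens are cofinal among conic neighbourhoods of the zero section, so sheafification does not change global sections. The finite-rank and wrapped statements then follow by restricting to perfect-microstalk objects and taking compact objects.

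You are more explicit than the paper in places, namely the $j_*$-section argument for the supported quotient and the KS Ch.~VIII link between perfect microstalks and perfect stalks. However, you should state why cofinality is ``automatic'' rather than only asserting it: any $\RR_{>0}$-invariant open set containing $(x,0)$ contains $T^*V$ for some neighbourhood $V$ of $x$. Hence the opens $T^*V$ alone cover $T^*M$, and do not merely cover the zero section.
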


\begin{proof}
It suffices to prove $\mush|_M=\Sh$ and $\mush^c|_M=\Sh^c$.
For an open subset $U\subset M$ one has $\mush^{\mathrm{pre}}(T^*U)=\Sh(U)$, so $\mush^{\mathrm{pre}}|_M=\Sh$ is already a sheaf.
The statement for $\mush^c$ is the corresponding finite microlocal-rank subcategory.
Taking compact objects yields the wrapped statement.
\end{proof}
\begin{rem}[sheaf versus cosheaf]\label{rem:sheaf_cosheaf}
Let $\Lambda$ be subanalytic isotropic. By \cite[Notation~5.5.7.7]{HTT}, taking left adjoints exhibits $\mush_\Lambda$ as a cosheaf taking values in $\prlstw$. As taking compact objects induces an equivalence $\prlstw\congto\catperf$, the functor $\mush^w_\Lambda$ is a cosheaf with values in $\catperf$.
\end{rem}
\begin{defi}[Microlocal sheaves on Weinstein manifolds {\cite{shende2021}, \cite{nadlershende20}}]\label{def:mush-weinstein}
Let $(W,\lambda,\phi)$ be a Weinstein manifold, and let $\Lambda\subset W$ be a subanalytic isotropic subset.
Assume $W$ is \emph{stably polarized}, namely we fix a section of $\mathrm{LGr}(TW\oplus \mathbb C^k)$ for some $k$.
Shende constructs an embedding of $W$ into $S^*M$ for some auxiliary manifold $M$, together with a Lagrangian subbundle $\nu_W$ of the symplectic normal bundle, regarded as a submanifold of $S^*M$ via a tubular neighborhood.
One then defines
\[
  \mush^W:=\mush_{\nu_W}|_W,
  \qquad
  \mush^{c,W}:=\mush^c_{\nu_W}|_W,
\]
and for a closed subset $\Lambda\subset W$ one writes $\mush_\Lambda$ and $\mush^c_\Lambda$ for the corresponding subsheaves microsupported in $\Lambda$.
\end{defi}

\begin{prop}[\cite{nadlershende20,GPS3}]
For a stably polarized Weinstein manifold $W$ with a tame skeleton $\Lambda\subset W$, the large microlocal category $\mush_\Lambda$ is a sheaf taking values in $\prrstw$, and $\mush^c_\Lambda$ is a sheaf of stable categories.
If $\Lambda=\mathrm{core}(W)$, then $\mush_\Lambda(\Lambda)$ is invariant under Weinstein homotopy.
\end{prop}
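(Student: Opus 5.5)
The statement has three parts: a sheaf property for $\mush_\Lambda$ with values in $\prrstw$, a sheaf property for $\mush^c_\Lambda$, and Weinstein homotopy invariance of $\mush_\Lambda(\Lambda)$ when $\Lambda=\mathrm{core}(W)$. I would reduce each to the cotangent-bundle statements of \cref{thm:basic-mush-cotangent}, using the embedding of \cref{def:mush-weinstein}.

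\emph{Sheaf properties.} After Shende's construction, $W$ sits in $S^*M$ with a Lagrangian subbundle $\nu_W$ of the symplectic normal bundle. The thickening $\nu_\Lambda\subset \nu_W$ of a tame (subanalytic isotropic) skeleton $\Lambda$ is again subanalytic isotropic in $S^*M$, because it is locally a product of $\Lambda$ with a Lagrangian fiber direction.

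Applying \cref{thm:basic-mush-cotangent}(2) to $\nu_\Lambda$, the sheaf $\mush_{\nu_\Lambda}$ on $S^*M$ takes values in $\prrstw$. Restricting a sheaf to the locally closed subset $\nu_W$ and then pushing forward along the projection $\nu_W\to W$ preserves the sheaf property. The fibers of this projection are contractible, and $\mush_{\nu_\Lambda}$ is locally constant along them by the standard microlocal invariance. Hence the values on small opens of $W$ agree with the values on their preimages, and restriction maps still have both adjoints.

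For $\mush^c_\Lambda$ the argument is the same, now using \cref{thm:basic-mush-cotangent}(3). The condition of perfect microstalks is local and is preserved under restriction.

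\emph{Homotopy invariance.} Let $W_t$ be a Weinstein homotopy with cores $\Lambda_t$. By Nadler--Shende, the Liouville flow provides a contact isotopy of the ambient $S^*M$ (after stabilization) carrying $\nu_{\Lambda_0}$ to $\nu_{\Lambda_t}$. Guillermou--Kashiwara--Schapira sheaf quantization then gives equivalences
\[
\mush_{\nu_{\Lambda_0}}(\nu_{\Lambda_0})\simeq\mush_{\nu_{\Lambda_t}}(\nu_{\Lambda_t}).
\]
The family of stable polarizations must stay continuous in $t$. This keeps the Maslov data, and hence the choice of twisting, unchanged.

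\emph{Main obstacle.} The delicate step is showing the result is independent of the auxiliary embedding and the polarization, and that the Liouville-flow homotopy can be realized by an ambient contact isotopy. For this I would cite \cite{nadlershende20} (and GPS3, via the comparison with wrapped Fukaya categories) rather than reprove it.
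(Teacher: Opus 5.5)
The paper gives no argument for this proposition. It quotes it from \cite{nadlershende20,GPS3}, and the following remark says that the invariance is proved by Nadler--Shende and also follows from the GPS3 comparison with wrapped Fukaya categories. Your reduction of the two sheaf statements to \cref{thm:basic-mush-cotangent} through the $\nu$-thickening is a fair unpacking of that citation.

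The invariance step, however, rests on a claim that fails: a Weinstein homotopy does not give a contact isotopy of $S^*M$ carrying $\nu_{\Lambda_0}$ to $\nu_{\Lambda_t}$. The Liouville flow of a fixed structure preserves its own core, so it cannot move $\Lambda_0$ at all. The Moser-type isotopy attached to a Liouville homotopy identifies the completed manifolds, but it does not send core to core. More fundamentally, a Weinstein homotopy may pass through births and deaths of cancelling critical points and through handle slides. At those moments stable discs are added to or removed from the skeleton, or the attaching data jump. So $\mathrm{core}(W_0)$ and $\mathrm{core}(W_1)$ are in general only homotopy equivalent (both are deformation retracts of $W$), not isotopic and not even homeomorphic. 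GKS applied to the skeleta therefore cannot produce the equivalence, and this is not the form in which the cited works establish invariance.

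What you need is an argument that never transports the skeleton. The route named in the paper's remark works. GPS3 identify $\mush^w_{\mathrm{core}(W)}(\mathrm{core}(W))$ with $\Fuk(W)$, with grading and orientation data induced by the polarization. The wrapped Fukaya category is invariant under Liouville, in particular Weinstein, homotopy. Since $\mush_\Lambda(\Lambda)$ is compactly generated, $\mush_\Lambda(\Lambda)\simeq\mathrm{Ind}\bigl(\mush^w_\Lambda(\Lambda)\bigr)$, which carries the invariance over to the large category.

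A smaller correction: once the polarization is fixed, the category is independent of the auxiliary embedding, but it is not independent of the polarization itself. Changing the polarization changes the Maslov data and can change the category; already for $T^*S^1$ it twists the grading around the circle. Your own requirement that the polarizations vary continuously in $t$ reflects this dependence.
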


\begin{rem}
When $\Lambda$ is the core of $W$, the wrapped category $\mush^w_\Lambda(\Lambda)$ is the microlocal analogue of the wrapped Fukaya category $\Fuk(W)$.
The above invariance is proved in \cite{nadlershende20}, and also follows from the comparison with wrapped Fukaya categories in \cite{GPS3}.
\end{rem}

\begin{rem}
For simplicity we work with a polarization of $W$, as required in \cite{shende2021,GPS3}.
It induces the grading and orientation data for Fukaya categories \cite{Sei08} and also the Maslov data needed to define $\mush$ and $\mush^c$; see \cite[\S5.3]{GPS3} and \cite{nadlershende20}.
\end{rem}

\section{Mirror A-model to a local toric Calabi-Yau orbifold}

\label{sec:setup}
Let $N\cong \bZ^n$ be a lattice of rank $n$, and let $S$ be a proper Fano toric orbifold with toric stacky fan $\bSi=(\Si,\mathbf b)$ in the sense of \cite{BCS05}. The stacky fan $\bSi$ comes with a fan $\Si$ in $N_\RR$, with $\mathbf b=\{v_1,\cdots, v_m\}\In N$ such that the $1$-cones $\Si(1)=\{\bR_{>0} v_1,\dots,\bR_{>0} v_m\}$. We do not require $\boldb$ to be primitive.

The fan $\bSi$ is equivalently given by an integral convex polytope $P=\mathrm{conv}(v_1,\dots,v_m)$, and a star triangulation $\mathcal T_P$ centered at $0$.
Let 
\[
  f = \sum_{v \in \boldb} a_v z^v: (\C^*)^n\cong \Hom(N, \C^*) \to \C
\]
be the Givental superpotential mirror to $S$.
The conic bundle is often written as $\{xy=f(z)-R\}$. In \cref{sec:skeleton-definition} we use the linearly equivalent presentation obtained from
$
  x=u+iv,\ y=u-iv,
$
namely the hypersurface (Hori-Vafa mirror)
\[
\cY=\{(z,u,v)\in (\bC^*)^n \times \bC^2\mid f(z)-R=u^2+v^2\}.
\]

We impose the positive-phase convention
\[
  R\in\bR_{>0},\qquad a_v\in\bR_{>0}.
\]

We also assume the RSTZ non-degeneracy condition for the polynomial
\[
  -R+\sum_{v\in\boldb}a_vz^v+u^2+v^2
\]
with respect to the toric variety $(\bC^*)^n\times\bC^2$: for every coordinate torus orbit $O$, the scheme-theoretic intersection $\cY\cap O$ is either empty or a smooth reduced hypersurface of $O$, and no coordinate torus orbit is contained in $\cY$.

\subsection{The Lagrangian skeleton $\fL$}
\label{sec:skeleton-definition}
Let 
\[
  Q:=\fT_\bR^\vee\cong (S^1)^n
\]
be the real $n$--torus underlying $(\bC^*)^n$.  We write $T^*Q$ for its cotangent bundle and $S^*Q\subset T^*Q$ for the unit cosphere bundle for an auxiliary metric on $Q$.

Let $\Lambda\subset T^*Q$ be the toric FLTZ conic Lagrangian skeleton mirror to $S$.  It is conic, hence determined by its Legendrian link at infinity
\[
  \partial\Lambda:=\Lambda\cap S^*Q\subset S^*Q.
\]

We recall the explicit (stacky) FLTZ construction.
Let $M:=\Hom(N,\bZ)$ so that $Q=\Hom(N,S^1)\cong M_\RR/M$ and hence $T^*Q\cong Q\times N_\RR$.
For a cone $\sigma\in\Si$ let $N_\sigma\subset N$ be the sublattice generated by $\{v_i\mid \bR_{>0}v_i\subset \sigma\}$.
Following \cite[Def.~6.3]{FLTZDM}, we define
\[
  \Lambda=\Lambda_{\bSi}:=\bigcup_{\sigma\in\Si}\bigl(\sigma^{\perp}_{\bSi}\times (-\sigma)\bigr)\subset T^*Q,
  \qquad
  \sigma^{\perp}_{\bSi}:=\{\theta\in Q\mid \theta(N_\sigma)=1\}.
\]
If the $\bSi$ defines a smooth toric variety, then one recovers the usual FLTZ skeleton.
In the toric setting $\partial\Lambda$ is a compact \emph{piecewise linear} (subanalytic) Legendrian.
We will need a \emph{smooth Weinstein hypersurface} (a ``ribbon'')
\[
  F\subset S^*Q
\]
whose Weinstein skeleton (core) is exactly the given (singular) Legendrian:
\[
  \mathrm{core}(F)=\partial\Lambda.
\]
Thus $F$ is a smooth manifold with boundary equipped with a Liouville form $\lambda_F$  and a Lyapunov function, while its skeleton is allowed to be singular and is identified with the PL Legendrian $\partial\Lambda$.

For a \emph{smooth} Legendrian $L\subset (Y,\alpha)$, the existence of such a ribbon is local: by the Legendrian neighborhood theorem one has a contactomorphism from a neighborhood of $L$ to a neighborhood of the zero section in the $1$-jet space $J^1L=T^*L\times \bR_z$, and the hypersurface $D^*L\times\{0\}\subset J^1L$ is a Weinstein hypersurface with core the zero section; see \cite[Ex.~2.15]{GPS1}.

In our case $\partial\Lambda\subset S^*Q$ is not smooth, but it is the RSTZ Legendrian at infinity associated to the \emph{stacky} fan $\bSi$.  A global ribbon for this \emph{PL} Legendrian is constructed in \cite[Prop.~16]{Shende22}: Shende produces a Weinstein pair $(T^*Q,F)$ whose \emph{relative skeleton} is the conic Lagrangian $\Lambda$ (in the sense recalled in \cite{Shende22}, following \cite{GPS1}).  Since the relative skeleton of a Weinstein pair $(X,F)$ is the union of $\mathrm{core}(X)$ with the Liouville cone on $\mathrm{core}(F)$,
\[
  \mathrm{core}(F)=\partial_\infty\Lambda=\partial\Lambda\subset S^*Q.
\]

Let $D^2$ be a closed disk, let $D_\disk:=\operatorname{int}(D^2)$, and let $I=(0,1)$. We write $\partial D^2=S^1$ and fix an embedding $S^1\times I\hookrightarrow D_\disk$ identifying $S^1\times I$ with an annular collar of the end of the open disk.
Since $\Lambda$ is conical, an end of $\Lambda$ is identified with $\partial\Lambda\times I$.
Define 
\[
\fL= (\Lambda\times S^1) \bigcup_{\partial\Lambda \times S^1\times I} (\partial \Lambda \times D_\disk).
\]

The main result of \cite{RSTZ14} applied here is the following proposition.
\begin{prop}\label{prop:RSTZ-skeleton}
The skeleton $\fL$ is a strong deformation retraction of $\cY$.
\end{prop}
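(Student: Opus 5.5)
The plan is to apply the main theorem of \cite{RSTZ14} to the Laurent polynomial
\[
  W(z,u,v)=-R+\sum_{v\in\boldb}a_vz^v+u^2+v^2
\]
on $(\bC^*)^n\times\bC^2$. That theorem produces a strong deformation retraction of the hypersurface $\cY=\{W=0\}$ onto an explicit combinatorial skeleton. The remaining work is to identify this skeleton with $\fL$.

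\textbf{Step 1: verify the hypotheses.} The Newton polytope of $W$ is $\Delta=\conv\bigl(\{(v,0,0)\}_{v\in\boldb}\cup\{0,(0,2,0),(0,0,2)\}\bigr)$. Since $S$ is Fano, the origin lies in the interior of $P$, so $0$ is a lattice point in the relative interior of the face $P\times\{0\}$. The coherent (star) triangulation induced by the coefficients $(-R,a_v,1,1)$ should be the join of the star triangulation $\mathcal T_P$ with the $2$-simplex spanned by $0,(0,2,0),(0,0,2)$. The positivity conventions $R>0$ and $a_v>0$, together with genericity, are what make this triangulation star-shaped at $0$; this is the condition RSTZ need. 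The non-degeneracy assumption on torus orbits was imposed above precisely to match their setup in $\bC^a\times(\bC^*)^b$. Stackiness (non-primitive $\boldb$) is absorbed by working with the sublattices $N_\sigma$, as in \cite{FLTZDM}.

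\textbf{Step 2: identify the skeleton.} The RSTZ skeleton is a union, over simplices $\tau$ of the triangulation containing $0$, of pieces of the form (real torus coset of $\tau$) $\times$ (dual cone). I would split this union into two kinds of simplices.
\begin{enumerate}
  \item For $\tau=\sigma$, a simplex of $\mathcal T_P$ not involving the $u,v$ directions, the piece in the $(\bC^*)^n$-factor is $\sigma^\perp_{\bSi}\times(-\sigma)$, which is exactly the FLTZ piece. In the $(u,v)$-directions, the conic $u^2+v^2=c$ with $c\neq 0$ contributes a circle, the vanishing cycle. This gives $\Lambda\times S^1$.
  \item For simplices that also contain the $u,v$ vertices, the circle collapses at the radial end and contributes a disk $D_\disk$. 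This happens only over the boundary cones, i.e.\ over $\partial\Lambda$, since reaching the edge of $P$ in the tropical picture is what makes $f-R$ dominate.
\end{enumerate}
Matching the gluing regions then recovers $\fL=(\Lambda\times S^1)\cup_{\partial\Lambda\times S^1\times I}(\partial\Lambda\times D_\disk)$, up to homeomorphism. Since strong deformation retractions transport along homeomorphisms of the target that commute with the inclusion, this is enough.

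\textbf{Expected obstacle.} The main difficulty is the combinatorial identification in Step 2, and specifically the disk-capping over $\partial\Lambda$. The quadratic exponents $(0,2,0)$ and $(0,0,2)$ are not primitive. In addition, RSTZ phrase things via the coordinates $x,y$ with $xy=f-R$ rather than $u^2+v^2$. I would first change variables back to $xy$, where the Newton polytope is the join of $P$ with the segment from $(0,1,0)$ to $(0,0,1)$ and everything is unimodular in the extra directions. I would then check that the RSTZ piece for the cone over $\partial P$ joined with $\{x,y\}$ is the product of the FLTZ boundary link with a real $2$-disk bounded by the $S^1$ of the conic fiber.
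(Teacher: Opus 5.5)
Your overall plan, applying the general-cone version of \cite{RSTZ14} to $-R+f+u^2+v^2$ and identifying the resulting combinatorial space with $\fL$, is the right one. However, Step~2, which is the whole content of the proof, does not work as written.

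First, the RSTZ skeleton is not a union of ``torus coset $\times$ cone'' pieces over simplices through $0$; that would be an $(n+2)$-dimensional FLTZ-type Lagrangian for $\widetilde N$. It is the $(n+1)$-dimensional space of pairs $(x,\phi)$ such that $x\in\partial\Delta'$, the union of simplices \emph{not} meeting $0$, and $\phi(w)=1$ for the vertices $w$ of $\tau_x$. This space is taken modulo the quotient attached to the cone $K=N_\RR\oplus\RR_{\ge0}e_u\oplus\RR_{\ge0}e_v$.

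Second, and more seriously, your assignment of pieces is reversed. Take $x$ in the $P$-face, i.e.\ in simplices of $\cT_P$ avoiding $2e_u,2e_v$. The smallest face of $K$ containing such $x$ lies in $N_\RR$, so the quotient forgets both $u,v$-phases. This locus is the compact skeleton of $\{f=R\}$, i.e.\ $\partial\Lambda$, and it sits in $\fL$ as the centres of the disks, not as $\Lambda\times S^1$. Geometrically, this is the region where $u^2,v^2$ are negligible and the conic degenerates, so no vanishing circle survives there.

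All of $\Lambda\times S^1$ lies over simplices containing $2e_u$ or $2e_v$. This includes the zero section $Q\times S^1$, which comes from $x\in E=[2e_u,2e_v]$, where $\theta$ is unconstrained. Let $\lambda_E$ be the barycentric weight on $E$. The rescaled covector $\xi=-\lambda_E^{-1}\sum_i\lambda_i v_i$ identifies the toric part of $\{\lambda_E>0\}$ with the conic $\Lambda$. The limit $\lambda_E\to0$ is the radial end, where the circle shrinks to the cone point.

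Your ``expected obstacle'' is also misdiagnosed. The circle is produced exactly by the non-primitivity of $2e_u,2e_v$. Over $\operatorname{relint}E$ the conditions $\alpha^2=\beta^2=1$ give four phase points, and each endpoint gives two after the quotient. The resulting graph $K_{2,2}$ is the $S^1$.

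Passing back to $xy=f-R$ does not help. The Newton polytope of $xy-f+R$ is the pyramid $\conv\bigl(P\times\{0\}\cup\{(0,1,1)\}\bigr)$, of dimension $n+1$. It is not the join of $P$ with the segment from $(0,1,0)$ to $(0,0,1)$. So the full-dimensionality and cone-generation hypotheses of the RSTZ cone theorem fail, which is why the $u^2+v^2$ presentation is used. Even for a genuine join with a unimodular segment, the $u,v$-phase factor would be an interval rather than a circle, and the vanishing cycle would be lost.
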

\begin{proof}
We apply the PL skeleton construction of Ruddat--Sibilla--Treumann--Zaslow \cite{RSTZ14}.
The hypersurface is cut out by
\begin{equation}
  \label{eq:rstz-poly}
  -R+\sum_{v\in \boldb}a_v z^v+u^2+v^2=0,
\end{equation}
with $R\in\bR_{>0}$ and $a_v\in\bR_{>0}$.
Set
\[
  \widetilde N:=N\oplus\bZ e_u\oplus\bZ e_v,
  \qquad
  K:=N_\RR\oplus\bR_{\geq 0}e_u\oplus\bR_{\geq 0}e_v\subset \widetilde N_\RR .
\]
Then
\[
  \Spec\bC[K\cap\widetilde N]\cong (\bC^*)^n\times\bC^2,
\]
and the Newton polytope of \eqref{eq:rstz-poly} is
\[
  \Delta=
  \conv\Bigl(\{0\}\cup\{v_1,\ldots,v_m\}\cup\{2e_u,2e_v\}\Bigr)
  \subset K .
\]
This is the Newton polytope used below.

Let
\[
  E:=\conv\{2e_u,2e_v\}\subset \bR e_u\oplus\bR e_v .
\]
Then
$
  \Delta=P*E
$
is the join of an $n$-dimensional polytope and a segment, so
\[
  \dim\Delta=n+2=\dim K.
\]
Moreover the linear part of $K$ is $K^\times=N_\RR$, and the image of $\Delta$ in
\[
  K/K^\times\cong \bR_{\geq0}e_u\oplus\bR_{\geq0}e_v
\]
is $\conv\{0,2e_u,2e_v\}$, whose positive span is the whole cone $\bR_{\geq0}e_u+\bR_{\geq0}e_v$.
Thus the dimension and cone-generation hypotheses in the RSTZ theorem are satisfied for this choice of $\Delta$ and $K$. The non-degeneracy convention in \cref{sec:setup} gives the remaining RSTZ input: $\cY$ is smooth, its intersections with coordinate torus orbits are smooth and reduced, and no coordinate torus orbit is contained in $\cY$.

Let $\cT_P$ be the regular star triangulation of $P$ fixed in \cref{sec:setup}, so that the fan of cones over its nonzero boundary simplices is the given fan $\Sigma$.
Let $\cT_E$ be the triangulation of the segment $E$ with vertices $2e_u$ and $2e_v$.
The join
\[
  \cT_\Delta:=\cT_P*\cT_E
\]
is again a regular star triangulation of $\Delta=P*E$ based at $0$.
Write $\cT$ for the simplices of $\cT_\Delta$ not meeting $0$, with support denoted $\partial\Delta'$ as in \cite[Def.~1.1]{RSTZ14}.
RSTZ associate to the data $(\Delta,\cT,K)$ a topological space
$
  S_{\Delta,\cT,K}
$
and construct a canonical PL embedding
$
  j:S_{\Delta,\cT,K}\hookrightarrow \cY
$
whose image is a strong deformation retract of $\cY$ by the general-cone deformation-retract theorem of \cite[\S5]{RSTZ14}.
It remains to identify $S_{\Delta,\cT,K}$ with $\fL$.

\smallskip

By \cite[Def.~1.1]{RSTZ14}, the space
\[
  S_{\Delta,\cT}\subset \partial\Delta'\times \Hom(\widetilde N,S^1)
\]
consists of pairs $(x,\phi)$ such that $\phi(w)=1$ for every vertex $w$ of the smallest simplex $\tau_x\in\cT$ containing $x$.
Write
\[
  \Hom(\widetilde N,S^1)
  \cong \Hom(N,S^1)\times S^1_u\times S^1_v
  =Q\times S^1_u\times S^1_v,
\]
and write $\phi=(\theta,\alpha,\beta)$, where $\alpha=\phi(e_u)$ and $\beta=\phi(e_v)$.
The vertex conditions are
\begin{equation}
  \label{eq:rstz-constraints-corrected}
  \theta(v_i)=1\quad\text{if }v_i\in\tau_x^{[0]},
  \qquad
  \alpha^2=1\quad\text{if }2e_u\in\tau_x^{[0]},
  \qquad
  \beta^2=1\quad\text{if }2e_v\in\tau_x^{[0]}.
\end{equation}
The first equations are the stacky FLTZ equations: if the toric vertices occurring in $\tau_x$ span a cone $\sigma\in\Sigma$, then
\[
  \{\theta\in Q\mid \theta(v_i)=1\text{ for all such }v_i\}
  =\sigma^\perp_{\bSi}.
\]
The equality uses the definition $\sigma^\perp_{\bSi}=\{\theta\in Q\mid \theta(N_\sigma)=1\}$ and the fact that $N_\sigma$ is generated by the corresponding stacky ray generators.

We now describe the $u,v$-phase factor, including the quotient relation of \cite[Def.~5.6]{RSTZ14} associated to the cone $K$.
Over the relative interior of the segment $E$, both vertices $2e_u$ and $2e_v$ occur, so \eqref{eq:rstz-constraints-corrected} imposes
\[
  \alpha^2=1,
  \qquad
  \beta^2=1.
\]
Thus there are four phase choices $(\alpha,\beta)\in\{\pm1\}\times\{\pm1\}$.
At the endpoint $2e_u$, the smallest face of $K$ containing the $E$-coordinate is $N_\RR\oplus\bR_{\geq0}e_u$.
The RSTZ quotient forgets the $e_v$-phase and keeps only the condition $\alpha^2=1$; hence this endpoint contributes two vertices indexed by $\alpha=\pm1$.
Similarly, the endpoint $2e_v$ contributes two vertices indexed by $\beta=\pm1$.
The four open edges over the interior of $E$ connect these two pairs of endpoints.
Consequently the $u,v$-phase factor is the complete bipartite graph
\[
  G=K_{2,2},
\]
which is a PL circle.

Let $\lambda_E(x)$ be the total barycentric weight of $x\in\partial\Delta'$ on the two vertices $2e_u,2e_v$.
On the locus $\lambda_E(x)>0$, write $\lambda_i(x)$ for the barycentric weights of the toric vertices $v_i$ and set
\[
  \xi(x):=-\lambda_E(x)^{-1}\sum_i \lambda_i(x)v_i\in N_\RR .
\]
If the toric vertices of $\tau_x$ lie in a cone $\sigma\in\Sigma$, then $\xi(x)\in -\sigma$, and the phase equations give $\theta\in\sigma^\perp_{\bSi}$.
Thus the toric part over $\lambda_E>0$ is
\[
  \bigcup_{\sigma\in\Sigma}\sigma^\perp_{\bSi}\times(-\sigma)=\Lambda.
\]
Together with the $u,v$-phase factor this identifies the locus $\lambda_E>0$ in $S_{\Delta,\cT,K}$ with $\Lambda\times G$.

When $\lambda_E(x)$ tends to $0$, the covector $\xi(x)$ tends radially to infinity, and the toric part approaches the Legendrian boundary $\partial\Lambda\subset S^*Q$.
At $\lambda_E(x)=0$, the point $x$ lies in the $P$-face of $\Delta$; the smallest face of $K$ containing $x$ is contained in the lineality space $N_\RR$, and the RSTZ quotient forgets both $\alpha$ and $\beta$.
Hence the whole circle $G$ collapses to one point at $\lambda_E=0$.
The $u,v$-factor at the end is therefore the cone
\[
  C(G)\cong C(S^1)\cong D^2.
\]
The punctured cone $C(G)\setminus\{\text{cone point}\}$ is identified with $G\times I$ for a collar interval $I$. After choosing a PL homeomorphism $G\cong S^1$ and identifying $C(G)$ with the closed disk $D^2$, this model gives
\[
  \fL_{\rm RSTZ}
  \cong
  (\Lambda\times S^1)
  \cup_{\partial\Lambda\times S^1\times I}
  (\partial\Lambda\times D_\disk)
  =\fL.
\]

By construction, the RSTZ image $j(S_{\Delta,\cT,K})\subset\cY$ is a strong deformation retract of $\cY$.
Via the above identification of its interior model with $\fL$, we regard $\fL$ as the embedded RSTZ skeleton in $\cY$.
\end{proof}

\subsection{A Weinstein sector $U_\Lambda\subset T^*Q$ with core $\Lambda$}
The standard cotangent Liouville form $\lambda_{\mathrm{can}}$ on $T^*Q$ has core the zero section, not the conic Lagrangian $\Lambda$.
We will construct a Weinstein sector with core $\Lambda$ from the ribbon $F$.

\smallskip

Recall from \cref{sec:skeleton-definition} that Shende constructs a Weinstein pair $(T^*Q,F)$ whose relative skeleton is the FLTZ Lagrangian $\Lambda$, and in particular $\mathrm{core}(F)=\partial\Lambda$ \cite[Prop.~16]{Shende22}.
Following the gluing discussion of Gammage--Shende \cite[\S4.1]{gammage-shende-large-volume}, we view $F$ as the replacement for the jet-bundle neighborhood used there: the hypersurface $F$ provides a smooth Weinstein neighborhood of $\partial\Lambda$ inside $\partial_\infty T^*Q=S^*Q$.

\smallskip

One can construct a Liouville sector from a sutured Liouville manifolds \cite[Lem.~2.13, Def.~2.14]{GPS1}.
Let $U_\Lambda\subset T^*Q$ denote the Liouville sector associated to the sutured Liouville manifold $(T^*Q,F)$, i.e. the completion of $T^*Q$ away from a standard neighborhood of $F$ \cite[Lem.~2.13]{GPS1}.
We equip $U_\Lambda$ with the induced Liouville form and a compatible exhausting Weinstein function, which we denote by $(\lambda_\Lambda,\phi_\Lambda)$.
By construction, $U_\Lambda$ has the conic end of the form
\begin{equation}
  \label{eq:collar-U-lambda}
  \mathrm{Nbd}(\partial U_\Lambda)\cong F\times T^*(0,\varepsilon)_t,
  \qquad
  \lambda_\Lambda=\lambda_F+p_t\,dt
\end{equation}
for $\varepsilon>0$ sufficiently small, and on $\Nbd(\partial U_\Lambda)$, $\phi_\Lambda = \phi_F + \frac{p_t^2}{2}$ where $\phi_F$ is the Weinstein function on $F$.

\begin{prop}
  \label{prop:core-U-lambda}
The Weinstein sector $(U_\Lambda,\lambda_\Lambda,\phi_\Lambda)$ has core $\mathrm{core}(U_\Lambda)=\Lambda$.
\end{prop}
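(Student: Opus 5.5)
The plan is to compute the core of $U_\Lambda$ directly as the set of points whose forward Liouville flow stays bounded. We use the description of $U_\Lambda$ as the sector associated to the sutured Liouville manifold $(T^*Q,F)$ from \cite[Lem.~2.13, Def.~2.14]{GPS1}, and compare it with the relative skeleton of the Weinstein pair $(T^*Q,F)$ given by \cite[Prop.~16]{Shende22}. Recall that for a Weinstein pair $(X,F)$ the relative skeleton is
\[
  \mathrm{core}(X)\cup \bigl(\RR_{>0}\cdot \mathrm{core}(F)\bigr)
\]
in the completion. In our case this union is $Q\cup \RR_{>0}\cdot\partial\Lambda=\Lambda$, since $\Lambda$ is conic, contains the zero section, and has $\partial_\infty\Lambda=\partial\Lambda$. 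It therefore suffices to show that the core of the sector $U_\Lambda$ coincides with the relative skeleton of the pair.

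The argument has three steps, carried out in order.

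First, away from a neighborhood of $F$ at infinity, $\lambda_\Lambda$ agrees with the (possibly Shende-modified) Liouville form of the pair. Here the forward flow escapes to infinity except on $\mathrm{core}(T^*Q)\cup\RR_{>0}\cdot\partial\Lambda$. The points on the cone over $\partial\Lambda$ are exactly those whose flow enters the region near $F$ at infinity, so they must be examined in the collar.

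Second, in the collar \eqref{eq:collar-U-lambda} the form is $\lambda_\Lambda=\lambda_F+p_t\,dt$. Its Liouville vector field splits as $Z_F+p_t\partial_{p_t}$, and the gradient-like function is $\phi_F+p_t^2/2$. A point has bounded forward flow there iff its $F$-component lies in $\mathrm{core}(F)=\partial\Lambda$ and $p_t=0$. The locus $\{p_t=0\}\times \partial\Lambda\times(0,\varepsilon)_t$ is precisely how the convex-completion of \cite[Lem.~2.13]{GPS1} encodes the Liouville cone over $\partial\Lambda$ near the boundary. So the bounded-flow set in the collar is the end $\partial\Lambda\times I$ of $\Lambda$.

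Third, we glue the interior and collar descriptions. Because $\phi_\Lambda$ is an exhausting Weinstein function compatible on the overlap, no further stable manifolds appear. Hence $\mathrm{core}(U_\Lambda)=\Lambda$, with the identification being the identity on the overlap.

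The main obstacle is the second step, in matching conventions. The GPS sector construction removes a standard neighborhood of $F$ and reinserts it as $F\times T^*(0,\varepsilon)$. We must check that the trajectories asymptotic to $\partial\Lambda$ at infinity in the pair $(T^*Q,F)$ correspond exactly to the set $\{p_t=0,\ x\in\mathrm{core}(F)\}$. We must also check that no trajectory of the original pair with limit in $F\setminus\mathrm{core}(F)$ becomes trapped. To handle this, I would use that on the neighborhood of $F$ the contact form near infinity has normal form $\lambda_F+dz$, with $z$ becoming $t$ after the sector modification, and that $Z_F$ is expanding on $F\setminus\mathrm{core}(F)$, so those trajectories still escape through $\partial F$. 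If that matching is delicate, a fallback is to cite Shende's statement in \cite[Prop.~16]{Shende22} that the sector has relative skeleton $\Lambda$ and invoke the standard equivalence between Weinstein pairs and sectors in \cite[\S2]{GPS1}.
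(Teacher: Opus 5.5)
Your reduction and fallback are exactly the paper's proof: it cites \cite[Lem.~2.13, Def.~2.14, Rem.~2.16]{GPS1} for the fact that the sector of a sutured Liouville manifold $(X,F)$ has core $\mathrm{core}(X)\cup(\text{cone on }\mathrm{core}(F))$, and then plugs in $\mathrm{core}(T^*Q)=Q$ and $\mathrm{core}(F)=\partial\Lambda$. Your observation that $Q\cup\RR_{>0}\cdot\partial\Lambda=\Lambda$, because $\Lambda$ is conic and contains the zero section, is the implicit last step there.

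The three-step flow check is extra and does not stand on its own as sketched. Step~1 only asserts that points on $\RR_{>0}\cdot\partial\Lambda$ are trapped, which is precisely what the citation provides; for $\lambda_{\mathrm{can}}$ those points escape radially. Moreover, the collar field $Z_F+p_t\partial_{p_t}$ preserves $t$. So the collar core $\partial\Lambda\times\{p_t=0\}\times(0,\varepsilon)_t$ is a union of invariant $t$-slices, not the forward continuation of interior trajectories, and the gluing in step~3 really rests on the cited result.
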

\begin{proof}
For a sutured Liouville manifold $(X,F)$, the associated sector has skeleton equal to the relative skeleton of the sutured manifold, namely
\[
  \mathrm{core}(X,F):=\mathrm{core}(X)\ \cup\ \big(\text{Liouville cone on }\mathrm{core}(F)\big),
\]
see \cite[Lem.~2.13, Def.~2.14, Rem.~2.16]{GPS1}.
In our case $X=T^*Q$, so $\mathrm{core}(X)$ is the zero section, and $\mathrm{core}(F)=\partial\Lambda$ by construction.
Therefore $\mathrm{core}(U_\Lambda)=\Lambda.$
\end{proof}

\subsection{A Weinstein neighborhood $U$ of $\fL$ by Liouville gluing}\label{subsec:U_fL}
Define the left piece
\[
  U_1:=U_\Lambda\times T^*S^1,
\qquad
  \lambda_1:=\lambda_\Lambda + p_\theta d\theta,
\qquad
  \phi_1:=\phi_\Lambda + \tfrac12 p_\theta^2.
\]
By \cref{prop:core-U-lambda}, $\mathrm{core}(U_1)=\Lambda\times S^1$.

By the collar identification \eqref{eq:collar-U-lambda}, the sector $U_\Lambda$ has a standard collar near its boundary, hence after taking the product with $T^*S^1$ we obtain an exact symplectic overlap chart:
\[
  U_{12}:=F\times T^*S^1\times T^*(0,\varepsilon)
\subset U_1,
\qquad
  \lambda_1|_{U_{12}}=\lambda_F+p_\theta d\theta+p_t dt.
\]

Let $D_\disk$ be the open unit disk with the chosen collar $S^1\times I\hookrightarrow D_\disk$.
Define the right piece
\[
  U_2:=F\times T^*D_\disk,
\qquad
  \lambda_2:=\lambda_F+\lambda_{D_\disk},
\qquad
  \phi_2:=\phi_F+\phi_{D_\disk},
\]
where $\lambda_{D_\disk}$ is the canonical $1$--form on $T^*D_\disk$ and $\phi_{D_\disk}$ is any exhausting Weinstein function on $T^*D_\disk$ which, on the collar chart $T^*(S^1\times I)\cong T^*S^1\times T^*I$, agrees with $\tfrac12(p_\theta^2+p_t^2)$.
On this collar we have a canonical identification
\[
  F\times T^*(S^1\times I)\cong F\times T^*S^1\times T^*I
\]
under which
\[
  \lambda_2|_{U_{12}}=\lambda_F+p_\theta d\theta+p_t dt.
\]
Thus $U_{12}$ is an exact symplectic overlap for $(U_1,\lambda_1)$ and $(U_2,\lambda_2)$.
We define the Weinstein manifold
\begin{equation}
  \label{eq:def-U}
  U:=U_1\cup_{U_{12}}U_2,
\end{equation}
glued by the identity on $U_{12}$.
Because the primitives $\lambda_1$ and $\lambda_2$ agree on $U_{12}$, they glue to a global Liouville form $\lambda$ on $U$; by construction $\phi_1$ and $\phi_2$ may be chosen to agree on $U_{12}$, hence glue to a global Weinstein function $\phi$.

\begin{prop}
  \label{prop:core-U}
The Weinstein manifold $U$ defined in \eqref{eq:def-U} has core
\[
  \mathrm{core}(U)=\fL.
\]
\end{prop}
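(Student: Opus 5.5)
The plan is to compute the core of $U$ as the set of points whose forward Liouville flow stays bounded, equivalently the union of stable manifolds of the zeros of the Liouville vector field $Z$. This set is local with respect to $Z$: a point lies in $\mathrm{core}(U)$ if and only if its $Z$-trajectory neither escapes to infinity nor leaves through a region where we can already see that it escapes. Since $\lambda_1$ and $\lambda_2$ agree on $U_{12}$, the vector field $Z$ is globally defined and restricts to $Z_1$ on $U_1$ and to $Z_2$ on $U_2$. So I would compute the core piecewise and then check that the pieces match on the overlap.

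First, on $U_2=F\times T^*D_\disk$ the Liouville form is a product, so $Z_2=Z_F+Z_{D_\disk}$. Its skeleton is $\mathrm{core}(F)\times\mathrm{core}(T^*D_\disk)=\partial\Lambda\times D_\disk$, where $D_\disk$ is the zero section. Here one uses that the flows split and that $\phi_{D_\disk}$ is chosen so its only critical locus meets the zero section. On the collar chart the function $\tfrac12(p_\theta^2+p_t^2)$ makes $Z_{D_\disk}=p_\theta\partial_{p_\theta}+p_t\partial_{p_t}$, so nothing off the zero section survives.

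Second, on $U_1=U_\Lambda\times T^*S^1$ the product structure and \cref{prop:core-U-lambda} give $\mathrm{core}(U_1)=\Lambda\times S^1$. However, $U_1$ is a sector, so trajectories can exit through $\partial U_\Lambda\times T^*S^1$ into $U_2$. I would handle this with the collar normal form \eqref{eq:collar-U-lambda}. In $U_{12}$ the field is $Z=Z_F+p_\theta\partial_{p_\theta}+p_t\partial_{p_t}$, and the $t$-coordinate is constant along the flow. Therefore a trajectory starting in $U_{12}$ stays in $U_{12}$ for all forward time, as long as $\phi$ stays bounded. Such a trajectory is bounded exactly when its $F$-component lies in $\partial\Lambda$ and $p_\theta=p_t=0$. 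This gives $\mathrm{core}(U)\cap U_{12}=\partial\Lambda\times S^1\times I$, and this agrees both with the end of $\Lambda\times S^1$ (the end $\partial\Lambda\times I$ of $\Lambda$) and with the collar of $\partial\Lambda\times D_\disk$.

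Third, I would assemble the pieces. A point of $U_1\setminus U_{12}$ whose trajectory reaches $U_{12}$ stays there, so its boundedness is decided by the previous step. A point whose trajectory stays in $U_1\setminus U_{12}$ has bounded trajectory exactly when it lies in $\mathrm{core}(U_\Lambda)\times S^1$. The analogous statement holds on $U_2$. Taking the union gives $\mathrm{core}(U)=(\Lambda\times S^1)\cup_{\partial\Lambda\times S^1\times I}(\partial\Lambda\times D_\disk)=\fL$.

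The main obstacle is the second step. We must make sure that the Liouville field on $U_\Lambda$ away from the collar is genuinely inward or tangent at the sectorial boundary, so that trajectories cannot leave $U_1$ other than through $U_{12}$. We also need $\phi$ to be exhausting on the glued manifold and Morse–Bott-compatible, so that "bounded trajectory" correctly characterizes the core. For this I would invoke the sutured-sector construction \cite[Lem.~2.13]{GPS1}: it gives exactly the collar form with $Z$ having no $\partial_t$ component. I would then shrink $\varepsilon$ so that $U_{12}$ is forward invariant.
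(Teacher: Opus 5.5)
Your proposal is correct and follows the same route as the paper: it computes $\mathrm{core}(U_1)=\Lambda\times S^1$, $\mathrm{core}(U_2)=\partial\Lambda\times D_\disk$ and the overlap core $\partial\Lambda\times S^1\times I$ from the product structures, then glues them along $U_{12}$. Your extra observation is that on $U_{12}$ the field $Z=Z_F+p_\theta\partial_{p_\theta}+p_t\partial_{p_t}$ preserves $(\theta,t)$, so the collar is flow-invariant and no trajectory passes between the pieces; this is the justification the paper leaves implicit when it says the cores glue because the Liouville fields agree on $U_{12}$.
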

\begin{proof}
We have $\mathrm{core}(U_1)=\Lambda\times S^1$ and $\mathrm{core}(U_2)=\mathrm{core}(F)\times D_\disk=\partial\Lambda\times D_\disk$.
On the overlap $U_{12}$, the core is $\partial\Lambda\times S^1\times I$.
Since the Liouville vector fields agree on $U_{12}$, the core of the glued sector is the union of the cores glued along their common part, i.e.
\[
  \mathrm{core}(U)=(\Lambda\times S^1)\cup_{\partial\Lambda\times S^1\times I}(\partial\Lambda\times D_\disk)=\fL.
\]
\end{proof}

\subsection{Brane structures and grading data}\label{subsec:polarization}
To speak about $\bZ$-graded microlocal sheaves, we must fix brane data: a stable polarization (equivalently, grading/Maslov data) together with a $\bZ/2$ background class; see \cite[\S2]{GPS3} and \cite[\S2]{nadlershende20}.
In this paper we work in the \emph{stably polarized} framework of \cref{def:mush-weinstein}, following \cite{shende2021,nadlershende20,GPS3}.

\smallskip\noindent
\emph{Polarization on $U$ from cotangent models.}
Recall that $U$ was constructed by Liouville gluing $U=U_1\cup_{U_{12}}U_2$ with
\[
  U_1=U_\Lambda\times T^*S^1,\qquad
  U_2=F\times T^*D_\disk,\qquad
  U_{12}=F\times T^*S^1\times T^*(0,\varepsilon).
\]
Each cotangent bundle $T^*M$ carries a canonical Lagrangian distribution (polarization) given by the vertical tangent subbundle
\[
  T_{\mathrm{vert}}(T^*M):=\ker(d\pi_M)\subset T(T^*M),
\]
which is Lagrangian for the canonical symplectic form.

We also define a canonical polarization on the Weinstein hypersurface $F\subset S^*Q$ as follows.
Let $\pi:S^*Q\to Q$ be the projection and let $\alpha$ be the restriction of the canonical $1$--form on $T^*Q$ (so $\ker(\alpha)$ is the standard contact distribution on $S^*Q$).
Set $\xi:=\ker(\alpha)$, which is symplectic for $d\alpha$, and consider the vertical subbundle
\[
  V:=\ker(d\pi)\cap \xi\subset \xi.
\]
Since $d\alpha$ pairs vertical directions only with horizontal ones, $V$ is isotropic; moreover $\mathrm{rank}(V)=n-1=\tfrac12\mathrm{rank}(\xi)$, hence $V$ is Lagrangian inside $\xi$.
Because $F$ is a Weinstein hypersurface, the form $d(\alpha|_F)=d\alpha|_{TF}$ is symplectic; in particular, the Reeb direction of $\alpha$ is transverse to $F$, and the projection $TF\to \xi|_F$ along the Reeb field is a symplectic isomorphism.
We define $T_{\mathrm{vert}}F\subset TF$ to be the inverse image of $V|_F$ under this identification; it is a Lagrangian distribution on $(F,d\lambda_F)$.

Next, we choose a polarization on the sector $U_\Lambda$.
On the interior of $U_\Lambda$ (i.e.\ away from the boundary at infinity), we view $U_\Lambda$ as an open subset of $T^*Q$ and take the restriction of the cotangent polarization $T_{\mathrm{vert}}(T^*Q)=\ker(d\pi_Q)$.
Near the boundary, we use the collar chart \eqref{eq:collar-U-lambda}
\[
  \mathrm{Nbd}(\partial U_\Lambda)\cong F\times T^*(0,\varepsilon)_t.
\]
In standard cotangent coordinates $(\theta,p)$ on $T^*Q$, the cotangent polarization is spanned by the fiber vectors $\partial_{p_i}$.
Passing to polar coordinates $p=r\cdot u$ on the fibers (where $u\in S^*_qQ$ and $r>0$), this polarization splits as the direct sum of the vertical directions tangent to the unit sphere (giving $V$ above) and the radial direction $\partial_r$.
Restricting to $F\subset S^*Q$ and identifying the collar parameter $t$ with a monotone function of $r$, we obtain the product polarization
\[
  \ker(d\pi_Q)|_{\mathrm{Nbd}(\partial U_\Lambda)\subset U_\Lambda}=T_{\mathrm{vert}}F\oplus T_{\mathrm{horiz}}\bigl(T^*(0,\varepsilon)\bigr)\subset T\bigl(F\times T^*(0,\varepsilon)\bigr)
\]
on the collar, where $T_{\mathrm{horiz}}(T^*(0,\varepsilon))$ is generated by $\partial_t$.
We take
\[
  T_{\mathrm{pol}}U_\Lambda:=
  \begin{cases}
  \ker(d\pi_Q)|_{U_\Lambda}\subset TU_\Lambda,\text{away from $\mathrm{Nbd}(\partial U_\Lambda)$, or $\varepsilon_2<t<\varepsilon$ on $\mathrm{Nbd}(\partial U_\Lambda)$};\\
  T_{\mathrm{vert}}F\oplus\mathbb R\left (\frac{t-\varepsilon_1}{\varepsilon_2-\varepsilon_1} \partial_t+ \frac{\varepsilon_2-t}{\varepsilon_2-\varepsilon_1} \partial_p\right ), \text{$\varepsilon_1\leq t\leq\varepsilon_2$ on $\mathrm{Nbd}(\partial U_\Lambda)$};\\
  T_{\mathrm{vert}} F \oplus T_{\mathrm{vert}}(T^*(0,\varepsilon)), \text{$t<\varepsilon_1$ on $\mathrm{Nbd}(\partial U_\Lambda)$.} 
  \end{cases}
  \]
Recall that the collar of $U_2$ is
\[
\mathrm{Nbd}(\partial U_2)=F\times T^*(S^1\times (0,\varepsilon))\hookrightarrow U_2=F\times T^*D_{\mathrm{disk}}.
\]
Finally we set
\begin{align*}
  &T_{\mathrm{pol}}U_1:=T_{\mathrm{pol}}U_\Lambda\oplus T_{\mathrm{vert}}(T^*S^1)\subset TU_1,\\
  &T_{\mathrm{pol}}U_2:=
  \begin{cases}
    T_{\mathrm{vert}}F\oplus T_{\mathrm{vert}}(T^*D_\disk)\subset TU_2, \text{away from $\mathrm{Nbd}(\partial U_2$), or $t<\varepsilon_1$ on $\mathrm{Nbd}(\partial U_2)$};\\
    T_{\mathrm{vert}}F\oplus\mathbb R\left (\frac{t-\varepsilon_1}{\varepsilon_2-\varepsilon_1} \partial_t+ \frac{\varepsilon_2-t}{\varepsilon_2-\varepsilon_1} \partial_p\right )\oplus T_{\mathrm{vert}}(T^*S^1), \text{$\varepsilon_1\leq t\leq\varepsilon_2$ on $\mathrm{Nbd}(\partial U_2)$};\\
    T_{\mathrm{vert}} F\oplus T_{\mathrm{horiz}}(T^*(0,\varepsilon))\oplus T_{\mathrm{vert}}(T^*S^1), \text{$t>\varepsilon_2$ on $\mathrm{Nbd}(\partial U_2)$.}
    \end{cases}
\end{align*}
By construction, they coincide on $U_{12}$, and hence glue to a Lagrangian distribution $T_{\mathrm{pol}}U\subset TU$.
In particular, it is a polarization of $U$; we denote the corresponding section of the Lagrangian Grassmannian bundle by
\[
  \mathfrak{p}:U\longrightarrow \mathrm{LGr}(TU).
\]

\subsection{Descent for $\mush$ on the core $\fL$}
\begin{prop}
  \label{prop:A-sheaf-side-colim}
The wrapped microlocal category $\mush^w_\fL(\fL)$ fits into the following homotopy pushout diagram.
\begin{equation}
  \label{eqn:A-sheaf-side-colim}
  \begin{tikzcd}
      \mush^w_\fL(\partial \Lambda \times S^1\times I) \arrow[d] \arrow[r] & \mush^w_\fL(\partial\Lambda\times D_\disk) \arrow[d] \\
      \mush^w_\fL(\Lambda\times S^1) \arrow[r]  & \mush^w_\fL(\fL).
  \end{tikzcd}
\end{equation}
\end{prop}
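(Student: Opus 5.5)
The pushout should come from cosheaf descent for $\mush^w_\fL$ applied to a two-set open cover of $\fL$. By \cref{rem:sheaf_cosheaf}, together with the proposition that $\mush_\Lambda$ is a sheaf valued in $\prrstw$ for a stably polarized Weinstein manifold with tame skeleton, $\mush^w_\fL$ is a cosheaf on $\fL$ valued in $\catperf$. Here we use the polarization $\mathfrak p$ of \cref{subsec:polarization} and $\mathrm{core}(U)=\fL$ from \cref{prop:core-U}. So it suffices to exhibit $\fL$ as the union of two open subsets whose intersection is the upper-left corner of the diagram.

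\textbf{The cover.} Take $A:=\Lambda\times S^1$, understood as the open piece $\fL\cap U_1$, so that it includes the collar $\partial\Lambda\times S^1\times I$. Take $B:=\partial\Lambda\times D_\disk=\fL\cap U_2$. Both are open in $\fL$ by the definition of $\fL$ as a gluing along the open collar. Their intersection is $A\cap B=\fL\cap U_{12}=\partial\Lambda\times S^1\times I$, and $A\cup B=\fL$. Since $\mush^w_\fL$ is a cosheaf on $\fL$, hence on the poset of opens, the Čech condition for a two-element cover gives the pushout
\[
\mush^w_\fL(A)\sqcup_{\mush^w_\fL(A\cap B)}\mush^w_\fL(B)\simeq\mush^w_\fL(\fL)
\]
in $\catperf$. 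The maps are corestrictions, which are the left adjoints of the restrictions in the sheaf $\mush_\fL$. Equivalently, one can obtain the pullback of large categories in $\prrstw$, pass to left adjoints, and then take compact objects, which is an equivalence $\prlstw\simeq\catperf$.

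\textbf{Main obstacle.} The real subtlety is to justify that the sheaf property holds on $\fL$ with these particular opens, and that the categories on the pieces are computed correctly. Concretely, one needs the following.
\begin{enumerate}
\item The skeleton $\fL$ is tame, so that the Nadler--Shende and GPS sheaf statement applies. Here the local models are products of the PL FLTZ Lagrangian $\Lambda$, or its Legendrian boundary $\partial\Lambda$ inside the ribbon $F$, with smooth factors.
\item The sets $A$ and $B$ should be identified with the cores of $U_1$ and $U_2$ inside $U$. Since the Liouville forms and polarizations agree on $U_{12}$, restricting the global $\mush$ to $\fL\cap U_i$ agrees with the intrinsic microlocal category of $(U_i,\lambda_i,T_{\mathrm{pol}}U_i)$.
\end{enumerate}
I would handle the first point by citing the arboreal/tameness property of FLTZ skeleta implicit in Shende's ribbon construction \cite{Shende22}, together with the fact that products with a cotangent factor preserve tameness. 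I would handle the second point by noting that the restriction of the sheaf $\mush$ to an open set depends only on a neighborhood germ of the skeleton and on the polarization there, which is local by the construction in \cref{def:mush-weinstein}. Once these are granted, the rest is formal.
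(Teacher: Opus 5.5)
Your proposal is correct and follows the paper's proof: both apply the cosheaf property of $\mush^w_\fL$ (\cref{rem:sheaf_cosheaf}) to the two-set open cover $\fL=(\Lambda\times S^1)\cup(\partial\Lambda\times D_\disk)$, whose intersection is the collar $\partial\Lambda\times S^1\times I$. Your second ``obstacle'', matching these restrictions with the intrinsic categories of $U_1$ and $U_2$, is not needed for this proposition, because all four corners are sections of the same cosheaf on $\fL$; it only matters later, in \cref{prop:identify-corners}.
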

\begin{proof}
    Apply \cref{rem:sheaf_cosheaf} to the cover $\fL=(\Lambda\times S^1)\cup(\partial\Lambda\times D_\disk)$ with intersection $\partial\Lambda\times S^1\times I$.
\end{proof}

\section{Affine Blow-up as push-out}
\label{sec:b-model}
The main result in this section is \cref{B-side-colim}, which generalizes \cite[Thm.~4.13]{gammage2022mirror}.

Let $S$ be a $k$-variety, and $P$ be an effective Cartier divisor in $S$. Define $X=S\times \GG_m$, $D= P\times \GG_m$ and $H=P\times \{1\} \subset D$. We consider the blow up $\Bl_{H} X$.

\begin{prop} \label{aff-blow-up}
The blow-up of $X$ along $H$ minus the strict transform of $D$
\[
\Bl_{H} X \setminus \tD
\]
is isomorphic to $\Tot(\cO(-P)) \setminus \{w\mid \langle w, v \rangle+1=0\}$. Here $v$ is the canonical section of $\cO(P)$, and $w\in \Tot(\cO(-P))$ takes fiberwise pairing with $v$.
\end{prop}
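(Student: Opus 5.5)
The statement is Zariski-local on $S$ and both sides are built functorially from the data $(S,P)$. So I would first prove it on an affine open $V=\Spec A\subset S$ where $P$ is cut out by a nonzerodivisor $g\in A$. Then I would check that the local isomorphisms are compatible with restriction and glue. Over such a $V$ we have $X=\Spec A[t,t^{-1}]$ and $H=V(g,t-1)$. Since $g$ is a nonzerodivisor in $A$, the pair $(g,t-1)$ is a regular sequence in $A[t^{\pm1}]$, because $A[t^{\pm 1}]/(t-1)\cong A$. Hence the blow-up is the closed subscheme of $X\times\PP^1_{[a:b]}$ cut out by $ag... $; precisely $\Bl_HX=\{\,g\,b=(t-1)\,a\,\}$ in $X\times\PP^1_{[a:b]}$.

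Next I would identify the strict transform $\tD$ of $D=\{g=0\}$. In the chart $b\neq 0$, with $s=a/b$, the equation reads $g=(t-1)s$. The total transform of $D$ is $\{(t-1)s=0\}$, the exceptional divisor there is $\{t-1=0\}$, so the strict transform is $\{s=0\}$. In the chart $a\neq0$, with $w=b/a$, the equation is $t-1=gw$. Here the exceptional divisor is $\{g=0\}$ and it meets $\tD$ only where $w=\infty$, i.e. the strict transform does not lie in this chart. Since the first chart minus $\{s=0\}$ is contained in the second chart, we get
\[
\Bl_HX\setminus\tD=\{a\neq0\}\cap\{t\neq 0\}=\Spec A[w,t^{\pm1}]/(t-1-gw)=\Spec A[w]\bigl[(1+gw)^{-1}\bigr].
\]
This step needs a short argument that the blow-up of a regular-sequence ideal is the displayed Proj. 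That is standard, via the Rees algebra being $A[t^{\pm1}][a,b]/(gb-(t-1)a)$ for a regular sequence.

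Finally, I would interpret the result globally. The coordinate $w$ transforms under a change of local generator $g\mapsto ug$ by $w\mapsto u^{-1}w$, since $gw=t-1$ is intrinsic. So $w$ is a fiber coordinate on $\Tot(\cO(-P))$, with $gw=\langle w,v\rangle$ where $v$ is the canonical section of $\cO(P)$. The local pieces $\Spec A[w][(1+gw)^{-1}]$ therefore glue to $\Tot(\cO(-P))\setminus\{\langle w,v\rangle+1=0\}$. The map back to $X$ is $t=1+\langle w,v\rangle$.

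The main obstacle is the bookkeeping of the strict transform when $P$ is singular or nonreduced. The strict transform must be computed as the closure of $D\setminus H$, and I would verify on the chart $b\neq0$ that this closure is exactly $\{s=0\}$. This is where the regularity of $(g,t-1)$ gets used. It should also be checked that the removed set is the complement of the $a\neq 0$ chart, taking care near points where both $g$ and $t-1$ vanish.
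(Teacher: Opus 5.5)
Your proposal is correct and follows essentially the same route as the paper, which carries out your chart computation globally: it embeds $\Bl_H X$ in $\bP(\cO_S\oplus\cO(P))$ by the single equation $(t-1)V=v\,U$ (your $a,b$ are its $V,U$), identifies $\tD=\{V=0\}$, and reads off $\Tot(\cO(-P))$ from the chart $w=U/V$, where $t=1+\langle w,v\rangle$ must be invertible. Your local version is somewhat more careful than the paper's: you justify the Proj description by the regular sequence $(g,t-1)$, and you point out the scheme-theoretic check that the closure of $D\setminus H$ is $\{s=0\}$, which holds because $t-1$ is a nonzerodivisor on $(A/g)[t^{\pm1}]$.
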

\begin{proof}

Let $L=\cO(P)$, and consider the rank $2$ vector bundle $\cE:=\cO_S\oplus L$ on $X$ and its projectivization $\bP(\cE)\to S$. Over a point $x\in S$, write homogeneous coordinates $[U:V]$ on the fiber $\bP(\cE_x)$, with $U$ corresponding to the $\cO_X$ summand and $V$ to the $L$ summand. The blow–up $\Bl_H X$ is realized as the closed subvariety
\[
\Big\{((x,t),[U:V])\in (S\times \GG_m)\times_S \bP(\cE) : (t-1)V=v(x)U\Big\}.
\]

On the open locus $t\neq 1$, $v=0$ is equivalent to $V=0$.
Hence the strict transform $\widetilde D=\overline{\pi^{-1}(D \setminus H)}$ is the Cartier divisor $\{V=0\}\subset \Bl_H X$.

Therefore
\[
   \Bl_H X \setminus \tD=\{V\neq 0\}.
\]

On this chart set $w:=U/V$, and fiberwisely $w$ is a
coordinate in $L^{-1}$. The incidence equation becomes $
t-1=\langle w,v\rangle .$ This induces an embedding map $\Phi$ as
\begin{align*}
   \Phi:\ \Bl_H X\ &\longrightarrow\ \Tot(L^{-1}),\\
   ((x,t),[U:V]) &\longmapsto (x,w=U/V),
\end{align*}
whose image is cut out by the condition $\langle w,v\rangle +1\neq 0$.
\end{proof}

\begin{defi}
    Let $\cC,\cC_\pm$ be stable categories, and $i_\pm:\cC_\pm\to\cC$ be fully faithful exact functors. We say $(i_+, i_-)$ is a (2-term) {\bf semi-orthogonal decomposition} of $\cC$ if the following conditions hold:\begin{enumerate}
        \item $\Hom(i_-c_-, i_+c_+)=0$ for any $c_\pm\in\cC_\pm$;
        \item For any $c\in\cC$, there is a fiber sequence $$i_-c_-\to c\to i_+c_+,$$
        where $c_\pm\in\cC_\pm$.
    \end{enumerate}
\end{defi}

\begin{prop}\label{fracture}
    Under the conditions above, we have:\begin{enumerate}
        \item $i_-$ admits a right adjoint, $i_+$ admits a left adjoint, and there is a fiber sequence of endo-functors:
            $$i_-i_-^\R\to\id\to i_+i_+^\L.$$
        \item If $i_+$ admits a right adjoint, then $(i_-^{\R\R},i_+)$ is a semi-orthogonal decomposition of $\cC$. There is a fiber sequence $i_+i_+^\R\to\id\to i_-^{\R\R}i_-^\R$ consequently.
    \end{enumerate}
\end{prop}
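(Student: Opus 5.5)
Both parts are standard manipulations with semi-orthogonal decompositions, and I will prove them directly from the two axioms in the definition.

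\emph{Part (1).} Given $c\in\cC$, choose a fiber sequence $i_-c_-\to c\to i_+c_+$ as in condition (2). First I show it is functorial and unique. For any $d_+\in\cC_+$, apply $\Hom(-,i_+d_+)$. Condition (1) kills $\Hom(i_-c_-,i_+d_+)$ and all its shifts, because $\cC_-$ is stable and so closed under shifts. This gives $\Hom(i_+c_+,i_+d_+)\simeq\Hom(c,i_+d_+)$. Since $i_+$ is fully faithful, $\Hom_{\cC_+}(c_+,d_+)\simeq\Hom_\cC(c,i_+d_+)$, so $c\mapsto c_+$ corepresents $\Hom(c,i_+(-))$. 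By the Yoneda lemma in the $\infty$-categorical form (pointwise representability yields an adjoint functor), $i_+$ has a left adjoint $i_+^\L$ with $i_+^\L c=c_+$, and the unit $c\to i_+i_+^\L c$ is the given map.

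Dually, $\Hom(i_-d_-,c)\simeq\Hom(i_-d_-,i_-c_-)\simeq\Hom(d_-,c_-)$, so $i_-$ has a right adjoint $i_-^\R c=c_-$ whose counit is $i_-c_-\to c$. The sequence is then identified with $i_-i_-^\R\to\id\to i_+i_+^\L$. Here one must check that the composite of counit and unit is the given fiber sequence. The space of factorizations is contractible because $\Hom(i_-\cdot,i_+\cdot)=0$, so the fiber of $c\to i_+i_+^\L c$ is canonically $i_-i_-^\R c$. This gives the fiber sequence of endofunctors.

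\emph{Part (2).} Assume $i_+$ has a right adjoint $i_+^\R$. Set $\cC'_-:=\ker(i_+^\R)=\{c:\Hom(i_+\cC_+,c)=0\}$, the right orthogonal of $i_+\cC_+$, with its inclusion $j$. For $c\in\cC$, let $i_+i_+^\R c\to c\to c'$ be the counit followed by its cofiber. Applying $\Hom(i_+d_+,-)$ and using full faithfulness, $\Hom(i_+d_+,i_+i_+^\R c)\to\Hom(i_+d_+,c)$ is an equivalence, so $c'\in\cC'_-$. Hence $(j,i_+)$, with $j$ playing the role of the ``$+$'' slot and $i_+$ that of the ``$-$'' slot, satisfies both axioms. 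Applying part (1) then gives a left adjoint $j^\L$ and the fiber sequence $i_+i_+^\R\to\id\to jj^\L$.

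It remains to identify $\cC'_-$ with the essential image of $i_-^{\R\R}$, meaning a right adjoint of $i_-^\R$ exists and is fully faithful with image $\cC'_-$. Compare the two descriptions $i_-i_-^\R c\to c\to i_+i_+^\L c$ and $i_+i_+^\R c\to c\to jj^\L c$. The functor $j^\L j\colon$ restricted appropriately, the composite $i_-^\R j\colon\cC'_-\to\cC_-$, is an equivalence. Its inverse is $j^\L i_-$: for $c\in\cC'_-$ one has $j^\L i_-i_-^\R c\simeq j^\L c\simeq c$, because $j^\L$ kills $i_+\cC_+$; the other composite is handled symmetrically.

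Then for $d_-\in\cC_-$ and $c\in\cC$,
\[
\Hom(i_-^\R c,d_-)\simeq\Hom(j^\L c,\,(i_-^\R j)^{-1}d_-)\simeq\Hom(c,\,j(i_-^\R j)^{-1}d_-),
\]
where the first equivalence uses $i_-^\R\simeq (i_-^\R j)\circ j^\L$, which holds since $i_-^\R$ kills $i_+\cC_+$. So $i_-^{\R\R}:=j\circ(i_-^\R j)^{-1}$ is fully faithful with image $\cC'_-$. Substituting it for $j$ gives the semi-orthogonal decomposition $(i_-^{\R\R},i_+)$ and the sequence $i_+i_+^\R\to\id\to i_-^{\R\R}i_-^\R$, because $j j^\L\simeq i_-^{\R\R}i_-^\R$.

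The main obstacle is this last identification, specifically proving that $i_-^\R j$ is an equivalence. The rest is formal Yoneda bookkeeping.
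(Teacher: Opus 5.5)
Your proof is correct, but it takes a genuinely different route from the paper. The paper argues by citation. It notes that a semi-orthogonal decomposition $(i_+,i_-)$ is the same as a left split Verdier sequence $\cC_+\xrightarrow{i_+}\cC\xrightarrow{i_-^\R}\cC_-$, via \cite[Cor.~A.2.17]{hermitian}. When $i_+$ also has a right adjoint, it then invokes \cite[Lem.~A.2.8]{hermitian}: $i_-^\R$ acquires a fully faithful right adjoint, and $\cC_-\xrightarrow{i_-^{\R\R}}\cC\xrightarrow{i_+^\R}\cC_+$ is again a left split Verdier sequence, which is the new decomposition.

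You instead work by hand:
\begin{enumerate}
\item You form the right orthogonal $\cC'_-=\ker(i_+^\R)$ and check, using the counit triangle, that $(j,i_+)$ is a semi-orthogonal decomposition.
\item You identify $\cC'_-$ with the essential image of $i_-^{\R\R}$ through the mutation functor $i_-^\R j:\cC'_-\to\cC_-$, with inverse $j^\L i_-$.
\end{enumerate}
Both inverse checks go through as you indicate. They use only $j^\L i_+=0$, $i_-^\R i_+=0$, $j^\L j\simeq\id$ and $i_-^\R i_-\simeq\id$. The factorization $i_-^\R\simeq(i_-^\R j)\,j^\L$ then exhibits $j\,(i_-^\R j)^{-1}$ as the right adjoint of $i_-^\R$. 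It also gives $jj^\L\simeq i_-^{\R\R}i_-^\R$.

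The paper's version is two lines, but it delegates to the Verdier-sequence lemma exactly the step you flag as the main obstacle: existence and full faithfulness of $i_-^{\R\R}$. Your argument is self-contained. It shows that $i_-^{\R\R}$ exists as a consequence rather than as an extra hypothesis, and it supplies an explicit equivalence between the two complements, at the cost of more bookkeeping. The sentence beginning ``The functor $j^\L j\colon$ restricted appropriately'' is garbled and should be replaced by a direct statement that $i_-^\R j$ is an equivalence.
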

\begin{proof}
    The first statement is trivial, and proves a semi-orthogonal decomposition is equivalent to a left split Verdier sequence $\cC_+\xrightarrow{i_+}\cC\xrightarrow{i_-^\R}\cC_-$ by \cite[Cor.~A.2.17]{hermitian}. If $i_+$ admits a right adjoint, then $\cC_-\xrightarrow{i_-^{\R\R}}\cC\xrightarrow{i_+^\R}\cC_+$ is also a left split Verdier sequence by \cite[Lem.~A.2.8]{hermitian}.
\end{proof}
The following proposition is a direct corollary of \cite[Thm.~B.8]{Lax_Additivity}. Here we present another proof.
\begin{prop}\label{SOD_pushout}
    In the diagram below, suppose the following statements hold:\begin{enumerate}
        \item $(i_+,i_-)$ is a semi-orthogonal decomposition of $\cC$.
        \item The functors $i_+,f_-$ admit right adjoints.
        \item There is an equivalence $i_+^\R i_-=f_+f_-^\R$, defining $\varphi:i_+f_+\to i_-f_-$.
    \end{enumerate}
    Then coequalizer of $\varphi$ coincides with the pushout of $\cC_\pm$ over $\mathcal{A}$.
\[
\begin{tikzcd}[row sep=1.2em, column sep=3.2em]
 & \cC_+ \arrow[rd, "i_+"] \arrow[dd, "\varphi", Rightarrow] &\\
\mathcal{A} \arrow[rd, "f_-"'] \arrow[ru, "f_+"] & & \cC \\
& \cC_- \arrow[ru, "i_-"']  &    
\end{tikzcd}
\]

\end{prop}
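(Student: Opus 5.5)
We check the universal property of the pushout directly. A cocone under $\cC_+\xleftarrow{f_+}\mathcal A\xrightarrow{f_-}\cC_-$ with apex $\cD$ consists of exact functors $g_\pm:\cC_\pm\to\cD$ and an equivalence $g_+f_+\simeq g_-f_-$. A cocone for the coequalizer of $\varphi$ (the lax colimit of $i_\pm$ made strict along $\varphi$) consists of an exact functor $G:\cC\to\cD$ such that $G\varphi: Gi_+f_+\to Gi_-f_-$ is an equivalence. So we must show that restriction $G\mapsto (Gi_+,Gi_-,G\varphi)$ is an equivalence between these two kinds of data, naturally in $\cD$.

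First we describe $\cC$ as a gluing. By \cref{fracture}(2), the hypothesis that $i_+$ has a right adjoint means $(i_-^{\R\R},i_+)$ is again a semi-orthogonal decomposition. Together with the original SOD, this identifies $\cC$ with the oriented gluing (lax limit) $\cC_+\vec\times_{\Phi}\cC_-$ along the gluing functor $\Phi:=i_+^\R i_-:\cC_-\to\cC_+$. Concretely, an object of $\cC$ is the fiber of a map $i_-c_-\to i_+c_+$ classified by $c_-\to \Phi\ldots$; dually, by the SOD, every $c$ sits in $i_-i_-^\R c\to c\to i_+i_+^\L c$. Consequently an exact functor $G$ out of $\cC$ is determined by $G_\pm:=Gi_\pm$ together with the natural transformation $G_+\Phi^{\L}\to G_-$, or equivalently the induced transformation, subject to no further conditions. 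This is the standard statement that exact functors out of a recollement/lax limit of stable categories are lax cocones.

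Next we match the transformations. Hypothesis (3), $\Phi=i_+^\R i_-\simeq f_+f_-^\R$, converts the datum $G_+\to G_-\circ(\cdot)$ through $\Phi$ into a transformation $G_+f_+\to G_-f_-$. Because $f_-$ has a right adjoint, the mate correspondence turns transformations against $f_+f_-^\R$ into transformations $G_+f_+\Rightarrow G_-f_-$, and under this correspondence the universal transformation corresponds exactly to $\varphi$. Hence the condition ``$G\varphi$ is an equivalence'' becomes ``the mate $G_+f_+\to G_-f_-$ is an equivalence'', which is precisely a pushout cocone. Naturality in $\cD$ then gives the identification of the coequalizer with the pushout.

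The main obstacle is the second step: making the lax-colimit description of $\cC$ precise at the $\infty$-categorical level, so that functors out of $\cC$ really are lax cocones with no hidden coherence conditions. The cleanest route would be to invoke the left-split Verdier sequence $\cC_+\to\cC\to\cC_-$ from the proof of \cref{fracture} and identify $\cC$ with the lax pushout via \cite{hermitian}. Alternatively, one could cite \cite[Thm.~B.8]{Lax_Additivity} for this step and verify by hand only the mate computation.
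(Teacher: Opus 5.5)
Your outline is correct and is essentially the paper's argument: the paper also compares $\{u\mid u\varphi \text{ invertible}\}$ with pushout cocones via $u\mapsto(ui_\pm,u\varphi)$. Instead of citing the universal property of the gluing, it writes the inverse explicitly as $u=\cofib\bigl(u_+i_+^\L[-1]\to u_+f_+f_-^\R i_-^\R\xrightarrow{\varepsilon}u_-f_-f_-^\R i_-^\R\to u_-i_-^\R\bigr)$ using the fracture sequence of \cref{fracture}, which is exactly your gluing reconstruction composed with your mate computation.

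When you write it up, fix two slips. First, the gluing datum is $G_+\Phi\Rightarrow G_-$, induced by the counit $i_+i_+^\R i_-\to i_-$; it is not $G_+\Phi^\L\to G_-$, which does not compose and would need a left adjoint of $i_-$. Second, objects of $\cC$ are fibers of maps $i_+c_+\to i_-c_-[1]$, classified by $c_+\to\Phi(c_-)[1]$, since every map $i_-c_-\to i_+c_+$ vanishes by condition (1).
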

\begin{proof}
    Arbitrarily pick a stable category $\cD$. By definition, we must find an equivalence between two spaces $$\{u\in\Fun^{\ex}(\cC,\cD)\mid u\varphi\mbox{ is invertible}\};$$
    $$\{(u_\pm\in\Fun^\ex(\cC_\pm,\cD),\varepsilon:u_+f_+\to u_-f_-)\mid \varepsilon\mbox{ is invertible}\}.$$
    In one direction, we construct the map as $$u\mapsto(u_\pm=ui_\pm,\varepsilon=u\varphi).$$
    To construct the inverse, we apply that the composite 
    $$i_+i_+^\L[-1]\to i_+i_+^\R i_+i_+^\L[-1]\to i_+i_+^\R i_-i_-^\R=i_+f_+f_-^\R i_-^\R\xrightarrow{\varphi} i_-f_-f_-^\R i_-^\R\to i_-i_-^\R$$ is identified with the natural map in \cref{fracture} by diagram chasing, hence the cofiber is $\id_\cC$. If a tuple $(u_\pm,\varepsilon)$ is given, we construct
    $$u=\cofib[u_+i_+^\L[-1]\to u_+i_+^\R i_+i_+^\L[-1]\to u_+i_+^\R i_-i_-^\R\to u_+f_+f_-^\R i_-^\R\xrightarrow{\varepsilon} u_-f_-f_-^\R i_-^\R\to u_-i_-^\R].$$
    Identifying $ui_\pm=u_\pm$, it suffices to show $u\varphi=\varepsilon$.
    In the formula $$u\varphi=[u_+f_+\to u_+i_+^\R i_+f_+\to u_+i_+^\R i_-f_-\to u_+f_+f_-^\R f_-\to u_-f_-f_-^\R f_-\to u_-f_-],$$
    we unwind the definition of the second arrow as 
    $$u_+i_+^\R\varphi=u_+i_+^\R[i_+f_+\to i_+f_+f_-^\R f_-\to i_+ i_+^\R i_-f_-\to i_-f_-].$$
    A similar diagram chasing simplifies it into $u\varphi=\varepsilon$ as desired.
\end{proof}

\begin{prop}\label{lemma:qcqs}
    Let $X$ be a qcqs scheme over $k$. Let $H\subset X$ be an lci closed subscheme of codimension 2. Let $D\subset X$ be an effective Cartier divisor containing $H$, as the former diagram below, where $E$ is the exceptional divisor. Set $\Phi=r_*(q^*(-)\otimes\cO_E(-1))$ and $k_\times:=(k^*)^\L=k_*(-)\otimes\cO_X(D)[-1]$. Then the latter diagram below satisfies the conditions of \cref{SOD_pushout}.
\\ \noindent\begin{minipage}{0.5\linewidth}
\[
\begin{tikzcd}
E \arrow[d, "q"] \arrow[rr, "r"] &                  & \Bl_H X \arrow[d, "p"] \\
H \arrow[r, "i"]                 & D \arrow[r, "k"] & X                     
\end{tikzcd}
\]
\end{minipage}
\begin{minipage}{0.5\linewidth}
\[
\begin{tikzcd}[row sep=1.2em, column sep=3.2em]
 & \Coh(H) \arrow[rd, "\Phi"] \arrow[dd, "\varphi", Rightarrow] &\\
\Coh(D) \arrow[rd, "k_\times"] \arrow[ru, "i^*"'] & & \Coh(\Bl_H X) \\
& \Coh(X) \arrow[ru, "{p^*[1]}"']  &    
\end{tikzcd}
\]
\end{minipage}
\end{prop}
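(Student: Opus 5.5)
Three things have to be checked for the second diagram in \cref{lemma:qcqs}: (1) $(\Phi, p^*[1])$ is a semi-orthogonal decomposition of $\Coh(\Bl_H X)$; (2) $\Phi$ and $k_\times$ admit right adjoints; (3) there is an equivalence $\Phi^\R\circ p^*[1]\simeq i^*\circ k_\times^\R$. Here $i_+=\Phi$, $i_-=p^*[1]$, $f_+=i^*$, $f_-=k_\times$.

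\textbf{Step (1).} Since $H\subset X$ is lci of codimension $2$, the blow-up $p$ is the projectivization of the rank-$2$ normal bundle $N_{H/X}$ over $H$, and $E=\bP(N_{H/X})$ is a $\bP^1$-bundle over $H$. Orlov's blow-up formula then gives a semi-orthogonal decomposition
\[
\langle \Phi(\Coh(H)),\ p^*\Coh(X)\rangle .
\]
Both functors are fully faithful: $p^*$ because $p_*\cO=\cO$, and $\Phi$ by the computation $q_*\cO_E(-1)=0=q_*\cO_E(-2)$ together with the normal bundle sequence $\cO_E(E)=\cO_E(-1)$. The vanishing $\Hom(p^*c,\Phi h)=\Hom(c,p_*r_*(q^*h(-1)))=\Hom(c,i_*q_*(q^*h(-1)))=0$ is exactly condition (1) of the definition with $i_-=p^*[1]$ and $i_+=\Phi$. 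Generation follows from Orlov in the qcqs setting; alternatively, one can argue Zariski-locally using a Koszul presentation of $H$ together with Beilinson's description of $\bP^1$-bundles, and glue by fpqc descent of semi-orthogonal decompositions.

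\textbf{Step (2).} The right adjoint of $\Phi$ is $q_*(r^!(-)\otimes\cO_E(1))$, which on $\Coh$ is the same as $q_*(r^*(-)\otimes\cO_E(1)\otimes\cO_E(E))[-1]=q_*r^*(-)[-1]$. For $k_\times = k_*(-)\otimes\cO_X(D)[-1]$, the right adjoint is $k^!(-\otimes\cO_X(-D))[1]=k^*(-)$, because $k^!=k^*(-)\otimes\cO_D(D)[-1]$. So $k_\times^\R=k^*$. These are bounded and coherent, since $q$ is proper and $r$, $k$ are lci.

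\textbf{Step (3).} This is where the real work is. We compute
\[
\Phi^\R p^*[1]=q_*r^*p^*=q_*q^*i^*k^*=i^*k^*,
\]
using $pr=kiq$ and $q_*\cO_E=\cO_H$. On the other side,
\[
f_+f_-^\R=i^*k_\times^\R=i^*k^*.
\]
So the two sides agree, and this equivalence defines $\varphi$.

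The main obstacle is checking the shifts and twists in the adjunctions precisely, so that the identification is canonical and compatible with the natural transformation $\varphi$ used in \cref{SOD_pushout}. In particular one has to track the normal bundle twist $\cO_E(E)\cong\cO_E(-1)$, which cancels the $\cO_E(1)$ in $\Phi^\R$ and the $[1]$ in $p^*[1]$. A secondary subtlety is that the adjoints of $\Phi$ and $k_\times$ must preserve $\Coh$ rather than only $\mathrm{QCoh}$. This holds because every morphism involved is proper or lci.
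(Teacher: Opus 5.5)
Your proposal follows the paper's proof essentially step for step. Both cite the blow-up semi-orthogonal decomposition, get the right adjoints from Grothendieck duality ($\Phi^\R=q_*r^*[-1]$, $k_\times^\R=k^*$), and compute $\Phi^\R p^*[1]=q_*q^*i^*k^*=i^*k^*$.

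One slip in your aside on full faithfulness: $q_*\cO_E(-2)$ is not zero for a $\bP^1$-bundle; it is a line bundle on $H$ placed in degree $1$. In codimension $2$, though, only $q_*\cO_E(-1)=0$ is needed, so nothing breaks.
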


\begin{proof}
    The semi-orthogonal decomposition is proved in \cite[Thm.~3.4]{SOD_kuz}; the right adjoints are known as Grothendieck duality. For the last condition, we have:
    $$\Phi^\R=q_*(r^\times(-)\otimes\cO_E(1))=q_*(r^*(-)\otimes r^*\cO_{\Bl_H X}(E)\otimes\cO_E(1))[-1]=q_*r^*[-1],$$
    $$\Phi^\R p^*[1]=q_*r^*p^*=q_*q^*i^*k^*=i^*k^*(-)\otimes q_*\cO_E=i^*k^*.$$
\end{proof}

\begin{theorem} \label{B-side-colim}
    In the situation of \cref{lemma:qcqs}, we suppose that $X$ is Noetherian, and $H$ is a transverse intersection of $D$ and another effective Cartier divisor. Then there is a pushout diagram:\\
    \begin{equation}
        \label{eqn:B-side-colim}
        \begin{tikzcd}
            \Coh(D) \arrow[d, "k_\times"'] \arrow[r, "i^*"] & \Coh(H) \arrow[d] \\
            \Coh(X) \arrow[r]                          & \Coh(\Bl_H X\setminus\tD)
        \end{tikzcd}
    \end{equation}
\end{theorem}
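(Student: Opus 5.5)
The plan is to combine \cref{lemma:qcqs} with \cref{SOD_pushout}, and then pass to the Verdier quotient by the subcategory supported on $\tD$.

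\textbf{Step 1: a coequalizer description of $\Coh(\Bl_H X)$.} By \cref{lemma:qcqs}, the diagram with $\Phi$, $p^*[1]$, $i^*$, $k_\times$ satisfies the hypotheses of \cref{SOD_pushout}. Hence $\Coh(\Bl_H X)$ is the coequalizer of the natural transformation $\varphi\colon \Phi i^*\to p^*[1]k_\times$. Concretely, for any stable $\cD$, exact functors out of $\Coh(\Bl_H X)$ correspond to triples $(u_H,u_X,\varepsilon)$, where $\varepsilon\colon u_H i^*\Rightarrow u_X k_\times$ is a natural transformation of functors on $\Coh(D)$, not necessarily invertible.

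\textbf{Step 2: pass to the quotient.} Since $X$ is Noetherian, $\Coh(\Bl_H X\setminus \tD)$ is the Verdier quotient (idempotent completed) of $\Coh(\Bl_H X)$ by $\Coh_{\tD}(\Bl_H X)$. The latter is generated by pushforwards from $\tD$. So exact functors out of $\Coh(\Bl_H X\setminus\tD)$ are those $u$ that kill $\Coh_{\tD}$. The goal is to show that, in terms of the triple $(u_H,u_X,\varepsilon)$, this condition is exactly that $\varepsilon$ is invertible. That identifies the quotient with the pushout in \eqref{eqn:B-side-colim}.

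\textbf{Step 3: compute the cofibers of $\varphi$.} For $\mathcal F\in\Coh(D)$, I would compute $\cofib(\varphi_{\mathcal F})$ and show it is (up to shift and twist) $\tilde k_*\,\tilde p^*\mathcal F$, where $\tilde k\colon \tD\to \Bl_H X$ is the inclusion and $\tilde p\colon \tD\to D$ is the projection.

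This step is where the hypothesis on $H$ enters. Since $H=D\cap D'$ is a transverse intersection of Cartier divisors, the map $\tD\cong \Bl_H D\to D$ is an isomorphism. Moreover $p^*D=\tD+E$ as Cartier divisors. This gives an exact sequence
\[0\to \cO_{\tD}(-E)\to \cO_{p^{-1}D}\to \cO_E\to 0.\]
Tensoring it with $p^*\mathcal F$ expresses $p^*k_*\mathcal F$ as an extension of $r_*q^*i^*\mathcal F$ by $\tilde k_*(\tilde p^*\mathcal F(-E))$. After matching the twist $\cO_E(-1)=\cO(E)|_E$ and the shift $k_\times=k_*(-)\otimes\cO(D)[-1]$, the map $\varphi_{\mathcal F}$ is the first map of this sequence, so its cofiber lies in $\Coh_{\tD}$.

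\textbf{Step 4: generation.} Because $\tilde p$ is an isomorphism, the objects $\tilde k_*\tilde p^*\mathcal F(-E)$, $\mathcal F\in\Coh(D)$, generate $\Coh_{\tD}(\Bl_H X)$ under cones, shifts and retracts (all coherent sheaves on $\tD$ arise, and sheaves set-theoretically supported on $\tD$ are iterated extensions of these). Hence $u$ kills $\Coh_{\tD}$ exactly when $u(\varphi_{\mathcal F})$ is invertible for all $\mathcal F$, which finishes the identification in Step 2.

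The main obstacle is Step 3: identifying $\varphi$, defined abstractly through the adjunction $\Phi^\R p^*[1]=i^*k^*$ in \cref{lemma:qcqs}, with the geometric map coming from $p^*D=\tD+E$, and keeping track of twists and shifts. I would first check this on $\mathcal F=\cO_D$ and then extend by naturality, using that $\varphi$ is determined by the adjunction unit.
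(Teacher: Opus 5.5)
Your overall route is the paper's: \cref{lemma:qcqs} and \cref{SOD_pushout} reduce the statement to two facts. First, $\cofib(\varphi_{\mathcal F})$ is supported on $\tD$. Second, these cofibers generate $\Coh_{\tD}(\Bl_H X)$. Given these, your Steps 2 and 4 are fine. (A small correction to Step 1: $\Coh(\Bl_H X)$ is the \emph{lax} pushout, and the pushout is its localization inverting $\varphi$. You only use the latter, which is what \cref{SOD_pushout} provides.)

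The genuine problem is in Step 3: you took the wrong one of the two decomposition sequences for $p^{-1}D=\tD+E$. In $0\to\cO_{\tD}(-E)\to\cO_{p^{-1}D}\to\cO_E\to0$, the $E$-supported term is the \emph{quotient}. Tensor with $\pi^*\mathcal F$ for $\pi\colon p^{-1}D\to D$, push forward, and twist by $\cO(p^*D)$. This gives
\[
\tilde k_*\bigl(\tilde p^*\mathcal F\otimes\cO(\tD)|_{\tD}\bigr)\longrightarrow p^*k_\times\mathcal F[1]\longrightarrow r_*q^*\bigl(i^*\mathcal F\otimes\cO_X(D)|_H\bigr).
\]
The first map here has $\tD$-supported source and $E$-supported cofiber, the opposite of what you assert. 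Moreover, the $E$-term has fiber degree $0$ over $H$, since $\cO(p^*D)$ is pulled back from $X$. So it is not $\Phi i^*\mathcal F=r_*(q^*i^*\mathcal F\otimes\cO_E(-1))$. Indeed, $\Phi i^*\mathcal F$ is right orthogonal to $p^*\Coh(X)$, while $\Hom(p^*A,r_*q^*B)=\Hom(A,k_*i_*B)$ is generally nonzero. Hence $\varphi_{\mathcal F}$ does not appear in this triangle under any rotation, and no matching of twists and shifts can put it there.

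The fix is the other sequence, which is exactly what the paper uses:
\[
0\to\cO_E(-\tD)\to\cO_{p^{-1}D}\to\cO_{\tD}\to0.
\]
Tensor with $\pi^*\mathcal F\otimes\cO(p^*D)$ and push forward. The first term becomes $r_*\bigl(q^*i^*\mathcal F\otimes\cO(p^*D-\tD)|_E\bigr)=r_*\bigl(q^*i^*\mathcal F\otimes\cO(E)|_E\bigr)=\Phi i^*\mathcal F$. The middle term is $p^*k_\times\mathcal F[1]$ by tor-independent base change. The third term is $\tilde k_*\bigl(\tilde p^*\mathcal F\otimes\cO(p^*D)|_{\tD}\bigr)$. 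With this twist, your Step 4 goes through verbatim.

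For identifying the first arrow with $\varphi_{\mathcal F}$, checking on $\cO_D$ and extending by naturality is not enough in the case of interest. There $D=P\times\GG_m$ is singular, and $\Perf(D)$ does not generate $\Coh(D)$. The paper instead reduces by flat base change to the universal local model $\ast\hookrightarrow\AA^1\hookrightarrow\AA^2$, and it too omits that computation.
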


\begin{proof}
    
    Let $T=p^*D\subset \Bl_H X$ and name the morphisms as the following diagram. Note that the right square is a tor-independent base change.
\[
  \begin{tikzcd}
                                & \tilde{D} \arrow[d, "b"]          &                        \\
E \arrow[d, "q"] \arrow[r, "a"] & T \arrow[d, "\pi"] \arrow[r, "t"] & \Bl_H X \arrow[d, "p"] \\
H \arrow[r, "i"]                & D \arrow[r, "k"]                  & X.                     
\end{tikzcd}\]
  We may omit the isomorphism $\pi b=p|_{\tilde{D}}:\tilde{D}\congto D$ along the proof. 

    Following \cref{lemma:qcqs}, it suffices to find $\cofib~\varphi$.
    We start with the short exact sequence \[0\to a_*\cO_E\otimes t^*\cO_{\Bl_H X}(-\tilde{D})\to \cO_T\to b_*\cO_{\tilde{D}}\to0.\]
    Let $\cF\in\Coh(D)$. Then we apply $t_*(\pi^*\cF\otimes t^*\cO_{\Bl_H X}(T)\otimes-)$ to this sequence to obtain a fiber sequence.

    The first term is \[t_*(\pi^*\cF\otimes a_*\cO_E(-1))=(ta)_*((\pi a)^*\cF\otimes a_*\cO_E(-1))=\Phi i^*\cF.\]

    The second term is \[t_*(\pi^*\cF\otimes t^*\cO_{\Bl_H X}(T))=t_*\pi^*\cF\otimes\cO_{\Bl_H X}(T)=p^*k_*\cF\otimes p^*\cO_X(D)=p^*k_\times\cF[1].\]

    The third term is \[t_*(\pi^* F\otimes t^*\cO_{\Bl_H X}(T)\otimes b_*\cO_{\tilde{D}})=\cO_{\Bl_H X}(T)\otimes(tb)_*\cF.\]

    It suffices to give a homotopy connecting the first arrow and $\varphi_\cF$. Locally everything is flat base change of $*\hookrightarrow \AA^1\hookrightarrow\AA^2$. We omit the proof for this universal example.
\end{proof}

\section{Categorical equivalences}\label{sec:cat_eq}

Let $S$ be the Fano toric orbifold with stacky fan $\bSi$ fixed in \cref{sec:setup}, and let $P\subset S$ denote its toric boundary divisor.
Let $K_S$ be the canonical bundle of $S$, with stacky fan $\wt\Sigma\subset N_\bR\oplus\bR$ and ray vectors $\{(v,1)\mid v\in\boldb\}\subset N\times\bZ$, and let $w:K_S\to \C$ be the toric function corresponding to the character $(\vec 0,1)$.
Set
\[
  \cX^\circ:=K_S\setminus\{w+1=0\}.
\]
By \cref{aff-blow-up}, we may identify $\cX^\circ\cong \Bl_H X\setminus\tD$, where
\[
  X:=S\times \GG_m,\qquad D:=P\times \GG_m,\qquad H:=P\times\{1\}\subset D,
\]
and $\tD$ denotes the strict transform of $D$.

\smallskip

Let $\Lambda=\Lambda_\bSi\subset T^*Q$ be the (stacky) FLTZ skeleton from \cref{sec:skeleton-definition}. Recall the three pieces of the skeleton $\fL$ from \cref{subsec:U_fL}:
\[
  \fL_1:=\Lambda\times S^1,\qquad \fL_{12}:=\partial\Lambda\times S^1\times I,\qquad \fL_2:=\partial\Lambda\times D_{\mathrm{disk}},
\]
where $D_{\mathrm{disk}}$ denotes the open unit disk introduced, which agrees with  $\partial\Lambda\times D_\disk$ appearing in \eqref{eqn:A-sheaf-side-colim}.

\begin{lemm}\label{prop:identify-corners}
There are equivalences of stable categories
\[
  \mush^w_\fL(\fL_1)\ \simeq\ \Coh(X),\qquad
  \mush^w_\fL(\fL_{12})\ \simeq\ \Coh(D),\qquad
  \mush^w_\fL(\fL_2)\ \simeq\ \Coh(H).
\]
\end{lemm}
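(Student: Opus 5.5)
Each of the three identifications will come from a Künneth-type splitting of the microlocal category into a toric factor and a surface factor. On the toric factor we then apply the coherent-constructible correspondence of \cite{kuwagaki20} (and its stacky version \cite{FLTZDM}). On the surface factor we use the elementary computations on $T^*S^1$ and $T^*D_\disk$.

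\emph{Step 1: product decompositions.} On $U_1=U_\Lambda\times T^*S^1$, the polarization $T_{\mathrm{pol}}U_1=T_{\mathrm{pol}}U_\Lambda\oplus T_{\mathrm{vert}}(T^*S^1)$ is a product. Away from the collar, $U_\Lambda$ is an open piece of $T^*Q$ carrying the cotangent polarization. Near $\Lambda$ we therefore have $\mush_\fL|_{\fL_1}\simeq \mush_{\Lambda\times 0_{S^1}}$ on $T^*(Q\times S^1)$. Since $\Lambda\times S^1$ is conic in $T^*(Q\times S^1)$, \cref{prop:mush-restrict-zero} gives
\[
\mush^w_\fL(\fL_1)\simeq \Sh^w_{\Lambda\times 0_{S^1}}(Q\times S^1)\simeq \Sh^w_\Lambda(Q)\otimes \Sh^w_{0}(S^1).
\]
The last equivalence is the Künneth formula for compactly generated categories of sheaves with product microsupport. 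Then $\Sh^w_\Lambda(Q)\simeq\Coh(S)$ by \cite{kuwagaki20}, and $\Sh^w_0(S^1)\simeq \mathrm{Perf}(k[\ZZ])\simeq\Coh(\GG_m)$. Because $S$ and $\GG_m$ are smooth, hence perfect, $\Coh(S)\otimes\Coh(\GG_m)\simeq\Coh(S\times\GG_m)=\Coh(X)$.

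\emph{Step 2: the pieces $\fL_2$ and $\fL_{12}$.} On $U_2=F\times T^*D_\disk$ the polarization is again a product, $T_{\mathrm{vert}}F\oplus T_{\mathrm{vert}}(T^*D_\disk)$, away from the collar. Hence
\[
\mush^w_\fL(\fL_2)\simeq \mush^w_{\partial\Lambda}(\partial\Lambda)\otimes \Sh^w_0(D_\disk)\simeq \mush^w_{\partial\Lambda}(\partial\Lambda),
\]
where the first factor is computed in the Weinstein hypersurface $F$ with core $\partial\Lambda$. Similarly,
\[
\mush^w_\fL(\fL_{12})\simeq \mush^w_{\partial\Lambda}(\partial\Lambda)\otimes\Coh(\GG_m).
\]
It remains to identify $\mush^w_{\partial\Lambda}(\partial\Lambda)\simeq\Coh(P)$, where $P$ is the toric boundary of $S$. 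This is the boundary version of CCC. Via the doubling/stop-removal description of \cite{GPS1,GPS3}, $\mush_{\partial\Lambda}(\partial\Lambda)$ is the category of microlocal sheaves along the Legendrian at infinity. One route is to invoke the stop-removal sequence
\[
\mush^w_{\partial\Lambda}(\partial\Lambda)\to \Sh^w_\Lambda(Q)\to \Sh^w_0(Q),
\]
and match it with $\Coh(P)\xrightarrow{k_*}\Coh(S)\to\Coh(S\setminus P)=\Coh(T)$. Alternatively, we can use Gammage--Shende's identification of the Legendrian boundary of the FLTZ skeleton with the mirror of the toric boundary \cite{gammage-shende-large-volume}, extended to the stacky setting via \cite{Shende22}. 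I would take the second route. Concretely, cover $\partial\Lambda$ by the pieces $\sigma^\perp_{\bSi}\times(-\sigma)^\infty$ indexed by the cones $\sigma\neq0$. The local categories are CCC for the toric orbit closures $\overline{O_\sigma}$ (lower-dimensional stacky toric varieties). Gluing them with \cref{rem:sheaf_cosheaf} reproduces the colimit presentation of $\Coh(P)$ over the strata of the normal-crossings boundary divisor.

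\emph{Main obstacle.} The hard step is the boundary identification $\mush^w_{\partial\Lambda}(\partial\Lambda)\simeq\Coh(P)$. There are two difficulties. The first is that $P$ is singular: it is a reducible normal-crossings divisor, and with stacky strata in the orbifold case. The second is making sure the Maslov and polarization data from \cref{subsec:polarization} agree with the cotangent conventions, so that no twist appears in the gluing. A related subtlety is that the descent presentation of $\Coh(P)$ must be in terms of $\Coh$ and not $\Perf$. I would handle this by working with the ind-completed categories in $\prlstw$ throughout and taking compact objects at the end. The Künneth steps are routine once the polarizations are split, which \cref{subsec:polarization} arranged on each chart.
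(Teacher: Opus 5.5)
Your proposal is correct and follows the same route as the paper. The paper also uses the K\"unneth splitting of each piece \cite{kuo2025dualitykernelsmicrolocalgeometry}, Kuwagaki's CCC $\Sh^w_\Lambda(Q)\simeq\Coh(S)$, the elementary computations $\mush^w_{S^1}(S^1)\simeq\Coh(\GG_m)$ and $\mush^w_{D_\disk}(D_\disk)\simeq\Perf(k)$, and the Gammage--Shende boundary identification $\mush^w_{\partial\Lambda}(\partial\Lambda)\simeq\Coh(P)$.

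The only difference is in that last step: the paper simply cites \cite[Thm.~7.13, Rem.~7.14]{shende-gammage}, whereas you sketch its proof via a cover of $\partial\Lambda$ indexed by cones (which should be the open stars, not the closed strata $\sigma^\perp_{\bSi}\times(-\sigma)^\infty$). You were right to discard the stop-removal route, since the cup functor, like $(i_P^*)^{\L}$, is not fully faithful and so cannot pin down the first term.
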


\begin{proof}
We have an equivalence of large categories
\[
  \mush_\Lambda(\Lambda)\ \simeq\ \Sh_{\Lambda}(Q)
\]
(\cite{Nad16}, \cite{nadlershende20} and \cite{GPS3}).
Taking compact objects yields
\[
  \mush^w_{\Lambda}(\Lambda)\ \simeq\ \Sh^{w}_{\Lambda}(Q).
\]
Non-equivariant coherent-constructible correspondence for orbifolds (\cite{kuwagaki20}) identifies $\Sh^{w}_{\Lambda}(Q)$ with $\Coh(S)$; hence
\begin{equation}\label{eq:CCC-S}
  \mush^w_{\Lambda}(\Lambda)\ \simeq\ \Coh(S).
\end{equation}

Similarly, Gammage--Shende prove (\cite[Thm.~7.13]{shende-gammage} and \cite[Rem.~7.14]{shende-gammage} for variants beyond the smooth case)
\begin{equation}\label{eq:CCC-boundary}
  \mush^w_{\partial\Lambda}(\partial\Lambda)\ \simeq\ \Coh(P).
\end{equation}
Then K\"unneth property for microlocal sheaves from \cite{kuo2025dualitykernelsmicrolocalgeometry} gives
\[
  \mush^w_\fL(\Lambda\times S^1)\ \simeq\ \mush^w_{\Lambda}(\Lambda)\otimes \mush^w_{S^1}(S^1),
\qquad
  \mush^w_\fL(\partial\Lambda\times S^1\times I)\ \simeq\ \mush^w_{\partial\Lambda}(\partial \Lambda)\otimes \mush^w_{S^1}(S^1),
\]
since $I$ is contractible.
Moreover, $\mush^w_{S^1}(S^1)\simeq \Perf(k[t,t^{-1}])=\Coh(\GG_m)$, so using \eqref{eq:CCC-S} and \eqref{eq:CCC-boundary} we obtain
\[
  \mush^w_\fL(\fL_1)\ \simeq\ \Coh(S)\otimes \Coh(\GG_m)\ \simeq\ \Coh(S\times\GG_m)=\Coh(X),
\]
and likewise $\mush^w_\fL(\fL_{12})\simeq \Coh(P\times\GG_m)=\Coh(D)$.

Finally, since $D_{\mathrm{disk}}$ is contractible, $\mush^w_{D_{\mathrm{disk}}}(D_\mathrm{disk})\simeq \Perf(k)=\Coh(\Spec k)$, and the same K\"unneth argument gives
\[
  \mush^w_\fL(\fL_2)\ \simeq\ \mush^w_{\partial\Lambda}(\partial \Lambda)\otimes \mush^w_{D_{\mathrm{disk}}}(D_\mathrm{disk})\ \simeq\ \Coh(P)\otimes \Coh(\Spec k)\ \simeq\ \Coh(P)=\Coh(H),
\]
since $H=P\times\{1\}\cong P$.
\end{proof}

\smallskip

\begin{lemm}\label{prop:compare-arrows}
Under the identifications in \cref{prop:identify-corners}, the upper horizontal arrow and left vertical arrow in \eqref{eqn:A-sheaf-side-colim} identify with the upper horizontal arrow and left vertical arrow in \eqref{eqn:B-side-colim}, namely
\[
  i^*:\Coh(D)\to \Coh(H),
\qquad\text{and}\qquad
  k_\times:\Coh(D)\to \Coh(X).
\]
\end{lemm}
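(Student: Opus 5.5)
We treat the two arrows separately, in each case reducing via the K\"unneth decompositions of \cref{prop:identify-corners} to a statement about one factor. Both corestriction functors in \eqref{eqn:A-sheaf-side-colim} are products. For the upper arrow, the inclusion $\fL_{12}=\partial\Lambda\times S^1\times I\hookrightarrow \fL_2=\partial\Lambda\times D_\disk$ is the identity on $\partial\Lambda$ times the collar inclusion $S^1\times I\hookrightarrow D_\disk$. For the left arrow, the inclusion $\fL_{12}\hookrightarrow\fL_1=\Lambda\times S^1$ is the inclusion of the end $\partial\Lambda\times I\hookrightarrow \Lambda$ times the identity on $S^1$. By naturality of the K\"unneth equivalence of \cite{kuo2025dualitykernelsmicrolocalgeometry} with respect to open inclusions (corestrictions are left adjoints of restrictions, and both are compatible with external tensor products), each arrow is the tensor product of a corestriction on one factor with the identity on the other.

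\emph{Upper arrow.} The corestriction $\mush^w_{S^1}(S^1)\to\mush^w_{D_\disk}(D_\disk)$ is the left adjoint of restricting local systems on the disk to the annulus. On $\Perf(k[t,t^{-1}])\to\Perf(k)$ it is $-\otimes_{k[t^{\pm1}]}k$ with $t\mapsto 1$, because the monodromy becomes trivial on the disk. Geometrically this is $j^*$ for $j:\{1\}\hookrightarrow\GG_m$. Tensoring with $\id_{\Coh(P)}$ gives $(\id_P\times j)^*=i^*:\Coh(P\times\GG_m)\to\Coh(P\times\{1\})$. One must check that the point $1$ corresponds to the trivial-monodromy local system; the meridian disk caps the $S^1$ so that the monodromy along it is killed, which singles out $t=1$.

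\emph{Left arrow.} The factor $\mush^w_\fL(\partial\Lambda\times I)\simeq\Coh(P)\to\mush^w_\Lambda(\Lambda)\simeq\Coh(S)$ is the corestriction from the end of $\Lambda$ into $\Lambda$, that is, the ``stop removal / boundary'' functor of the Weinstein pair $(T^*Q,F)$. We claim that under \eqref{eq:CCC-S} and \eqref{eq:CCC-boundary} this is $k_*(-)\otimes\cO(P)[-1]$, the left adjoint of $k^*$. The plan is the following:
\begin{enumerate}
\item Use the result of Gammage--Shende \cite{shende-gammage} (and \cite{gammage-shende-large-volume}) that the right adjoint of this corestriction, the microlocal restriction $\Sh_\Lambda(Q)\to\mush_{\partial\Lambda}(\partial\Lambda)$, intertwines with $k^*:\Coh(S)\to\Coh(P)$ (or $k^!$, depending on conventions).
\item Pass to left adjoints. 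The left adjoint of $k^*$ is $k_*(-)\otimes\cO(P)[-1]$ by Grothendieck duality, since $k^!=k^*\otimes\cO(P)[-1]$.
\item Tensor with $\id_{\Coh(\GG_m)}$ to obtain $k_\times$ on $D=P\times\GG_m$.
\end{enumerate}

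The main obstacle is pinning down exactly which functor Gammage--Shende's comparison produces, $k^*$ versus $k^!$, together with the shift and twist. This depends on the Maslov/polarization data of \cref{subsec:polarization}. I would first verify it on generators: the corestriction of a microlocal skyscraper at a point of $\partial\Lambda$ should go to a twisted structure sheaf of a toric stratum of $P$, and I would compare this with $k_\times$ of the corresponding sheaf on $P$. An alternative check is the case $S=\P^1$, where Hom computations fix the twist $\cO(P)$ and the shift $[-1]$. If the conventions yield $k^!$ instead, one composes with the autoequivalence $-\otimes\cO(P)$ of $\Coh(X)$, which is harmless for the pushout. Finally, the identifications must be checked for coherence with the chosen polarization, which on the overlap is the product one.
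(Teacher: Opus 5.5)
Your proposal is correct and follows the paper's proof essentially step for step. The upper arrow is the collar-into-disk corestriction on the circle factor, identified with $-\otimes_{k[t^{\pm1}]}k$ (pullback to $\{1\}\subset\GG_m$) and tensored with $\Coh(P)$; the left arrow comes from Gammage--Shende's ``restriction is mirror to microlocalization'' for $i_P^*:\Coh(S)\to\Coh(P)$, by passing to left adjoints (the cosheaf corestrictions) and tensoring with $\Coh(\GG_m)$ to obtain $k_\times$, while your extra checks (naturality of K\"unneth in the open inclusion, $k^*$ versus $k^!$ conventions, and the remark that an autoequivalence of $\Coh(X)$ would not affect the pushout) are refinements the paper leaves implicit.
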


\begin{proof}
The left vertical arrow in \eqref{eqn:A-sheaf-side-colim} is the corestriction functor for the inclusion $\fL_{12}\subset\fL_1$.
It suffices to treat the $\Lambda$-factor and then tensor with $\mush^w_{S^1}(S^1)\simeq \Coh(\GG_m)$.

On the B-side, the inclusion $k:D=P\times\GG_m\hookrightarrow X=S\times\GG_m$ is induced from the inclusion $i_P:P\hookrightarrow S$ of the toric boundary divisor.
Gammage--Shende identify the pullback $i_P^*:\Coh(S)\to\Coh(P)$ with a microlocalization functor on the A-side (``restriction is mirror to microlocalization''), by comparing both functors on the standard toric generators; see \cite[\S7.2, esp. Lem.~7.4 and Lem.~7.5]{shende-gammage}.
Taking left adjoints, and using that the wrapped microlocal categories form a cosheaf whose corestriction maps arise as these adjoints (as explained in the proof of \cite[Thm.~7.13]{shende-gammage}), we obtain that the corestriction functor
\[
  \mush^w_{\partial\Lambda}(\partial \Lambda)=\mush^w_\Lambda(\partial \Lambda)\longrightarrow \mush^w_{\Lambda}(\Lambda).
\]
corresponds to the functor $(i_{P}^*)^\L:\Coh(P)\to\Coh(S)$.
Tensoring with $\Coh(\GG_m)$ then identifies the corestriction $\mush^w_\fL(\fL_{12})\to \mush^w_\fL(\fL_1)$ with
\[
  k_\times=(i_{P}^*)^\L\otimes \id_{\Coh(\GG_m)}:\ \Coh(P\times\GG_m)\to \Coh(S\times\GG_m).
\]

\smallskip\noindent
Under \cref{prop:identify-corners}, the upper horizontal arrow in \eqref{eqn:A-sheaf-side-colim} is induced by the inclusion of the collar $S^1\times I\hookrightarrow D_{\mathrm{disk}}$ on the $\mush^w_{S^1}(S^1)$-factor, tensored with $\Coh(P)$.
Identifying $\mush^w_{S^1}(S^1)\simeq \Perf(k[t,t^{-1}])$ and $\mush^w_{D_{\mathrm{disk}}}(D_\mathrm{disk})\simeq \Perf(k)$, this functor is the derived tensor product
\[
  -\otimes_{k[t,t^{-1}]} k,
\]
which is the standard algebraic model for pullback along the inclusion $\{1\}\hookrightarrow \GG_m$, hence agrees with
\[
  i^*:\Coh(P\times\GG_m)\to \Coh(P\times\{1\}).
\]
\end{proof}

\smallskip

 By \'{e}tale descent, \cref{B-side-colim} also holds on DM stacks. Together with \cref{prop:A-sheaf-side-colim}, both \eqref{eqn:A-sheaf-side-colim} and \eqref{eqn:B-side-colim} are colimit diagrams in $\catperf$ (for \eqref{eqn:B-side-colim}, note that $\catperf\to\widehat{\catex}$ reflects pushouts). By \cref{prop:identify-corners} and \cref{prop:compare-arrows}, they are equivalent as colimit diagrams of equivalent data. In particular, we have
\[
  \mush^w_\fL(\fL)\ \simeq\ \Coh(\Bl_H X\setminus\tD)\ \simeq\ \Coh(\cX^\circ).
\]
This completes the categorical equivalence,
and finishes the proof of \cref{thm:main}. We denote this equivalence
$\Coh(\cX^\circ)\congto\mush^w_\fL(\fL)$ by $\kappa_{\cX^\circ}$.

\section{Characteristic cycles for finite microlocal-rank objects}
\label{sec:characteristic-cycles}

In this section we study the characteristic cycle for the sheaf $\mush^c$ of finite microlocal-rank objects. Every functor associated to the (Lagrangian) cycle sheaf and the orientation sheaves is underived.

\subsection{Global ambient construction}
\label{subsec:cc-global-ambient}

Let $(W,\lambda,\phi)$ be a stably polarized Weinstein manifold, and let $\Lambda\subset W$ be a compact subanalytic Lagrangian subset.
We assume that $\Lambda$ satisfies the following two conditions.
\begin{enumerate}
    \item The set $\Lambda$ admits a finite Whitney stratification
    \[
      \Lambda=\bigsqcup_{\alpha\in A}\Lambda_\alpha
    \]
    by connected smooth isotropic strata, and every top-dimensional stratum is Lagrangian.
    \item There exists a closed Legendrian subset
    \[
      \widetilde\Lambda\subset W\times \RR_z
    \]
    for the contact form $dz+\lambda$ whose projection to $W$ restricts to a homeomorphism $\widetilde\Lambda\to\Lambda$.
\end{enumerate}
Fix a stable Lagrangian-normal datum $\nu$ for $\Lambda$ in the sense of \cref{def:mush-weinstein}.
Choose an auxiliary contact embedding
\[
  \iota:W\times \RR_z\hookrightarrow S^*Z
\]
for some real analytic manifold $Z$.
Let $\Lambda^\infty\subset S^*Z$ be the image of $\widetilde\Lambda$, let
\[
  \Lambda^\infty_\nu\subset S^*Z
\]
be the Legendrian thickening determined by $\nu$, and let
\[
  i_{\Lambda,\nu}:\Lambda\hookrightarrow \Lambda^\infty_\nu
\]
be the inclusion of the original skeleton as the zero section of the thickening.
Write
\[
  q:\dot T^*Z\to S^*Z,
  \qquad
  \widehat\Lambda_\nu:=q^{-1}(\Lambda^\infty_\nu)\subset \dot T^*Z.
\]
Recall from Kashiwara--Schapira \cite[Def.~9.3.1]{KS90} that $\mathcal L_Z$ denotes the sheaf of Lagrangian cycles on $T^*Z$.
We define a sheaf of abelian groups on $\Lambda$ by
\[
  \mathcal Z_{\Lambda,\nu}:=i_{\Lambda,\nu}^{-1}q_*\underline{\Gamma}_{\widehat\Lambda_\nu}(\mathcal L_Z).
\]
Thus for an open subset $V\subset \Lambda$,
\[
  \Gamma(V,\mathcal Z_{\Lambda,\nu})
  =
  \varinjlim_{\substack{\Omega\subset \dot T^*Z\text{ conic open}\\ \Omega\supset q^{-1}(i_{\Lambda,\nu}(V))}}
  \Gamma_{\widehat\Lambda_\nu\cap \Omega}(\Omega,\mathcal L_Z),
\]
where the colimit is taken with respect to restriction under shrinking neighborhoods.

Let $\Lambda\subset W$ satisfy the above assumptions, with chosen stable Lagrangian-normal datum $\nu$.
Then the assignment
\[
  V\longmapsto K_0\bigl(\mush^c_\Lambda(V)\bigr)
\]
defines a presheaf of abelian groups on $\Lambda$.

\begin{defi}\label{thm:cc-global-ambient}
We define a morphism of presheaves
\[
  \CC_{\Lambda,\nu}:K_0^{\mathrm{pre}}(\mush^c_\Lambda)\longrightarrow \mathcal Z_{\Lambda,\nu}
\]
as follows.
Let $V\subset \Lambda$ be open and let $\cF\in \mush^c_\Lambda(V)$.
Since $\mush^c_\Lambda$ is obtained from the Nadler--Shende ambient cotangent model by sheafification, there is an open cover $V=\bigcup_i V_i$ such that each restriction $\cF|_{V_i}$ is represented in the ambient model by a constructible sheaf $G_i\in \Sh^c(Z)$ on a conic open neighborhood
\[
  \Omega_i\subset \dot T^*Z,
  \qquad
  \Omega_i\supset q^{-1}(i_{\Lambda,\nu}(V_i)),
\]
with
\[
  \ss(G_i)\cap \Omega_i\subset \widehat\Lambda_\nu.
\]
On $V_i$ we define $\CC_{\Lambda,\nu}([\cF])$ to be the germ of the Kashiwara--Schapira characteristic cycle
\[
  \CC(G_i)\in
  \Gamma_{\widehat\Lambda_\nu\cap \Omega_i}(\Omega_i,\mathcal L_Z),
\]
see \cite[Def.~9.4.1]{KS90}.
These local germs glue, and the resulting glued section is denoted
\[
  \CC_{\Lambda,\nu}([\cF])\in \Gamma(V,\mathcal Z_{\Lambda,\nu}).
\]
\end{defi}

We verify that the construction is well-defined.
Let $G_1$ and $G_2$ be two representatives of the same microlocal object on a conic neighborhood of $q^{-1}(i_{\Lambda,\nu}(V'))$ for some open set $V'\subset V$.
In the Verdier-quotient description of microlocalization, after shrinking to a smaller conic neighborhood $\Omega'$ of $q^{-1}(i_{\Lambda,\nu}(V'))$, the two representatives are connected by a zigzag whose cones have singular support disjoint from $\Omega'$.
Additivity of the classical characteristic cycle and the inclusion $\operatorname{supp}(\CC(-))\subset \ss(-)$ therefore imply
\[
  \CC(G_1)|_{\Omega'}=\CC(G_2)|_{\Omega'}
\]
in $\Gamma_{\widehat\Lambda_\nu\cap \Omega'}(\Omega',\mathcal L_Z)$.
Hence the local germs agree on overlaps and are independent of the chosen representatives.
The additivity of classical characteristic cycles in distinguished triangles implies that the assignment factors through $K_0$ and is compatible with restriction in $V$.

We next identify the coefficient local system on the smooth top-dimensional branches.
For a real vector bundle $E\to X$, the notation $\ori_E$ means the local system on $X$ of the orientations of fibers of $E$. It does not mean the absolute orientation sheaf of the total space of $E$.
Thus, for instance, $\ori_{D_\nu}$ below is a local system on $\Lambda_\alpha$ after identifying $\widetilde\Lambda_\alpha$ with $\Lambda_\alpha$.

Let
\[
  \Lambda^{\mathrm{top}}=\bigsqcup_{\alpha\in A_{\mathrm{top}}}\Lambda_\alpha
\]
be the union of the connected top-dimensional strata of $\Lambda$.
For $\alpha\in A_{\mathrm{top}}$, set
\[
  \mathfrak o_{\Lambda_\alpha,\nu}:=\mathcal Z_{\Lambda,\nu}|_{\Lambda_\alpha}.
\]
Let $D_\nu\subset \nu|_{\widetilde\Lambda_\alpha}$ be the auxiliary Lagrangian subbundle used in the $\nu$-thickening, and let
\[
  B_\alpha:=q^{-1}\bigl(D_\nu|_{\widetilde\Lambda_\alpha}\bigr)\subset \dot T^*Z,
\]
with projection $p_\alpha:B_\alpha\to \Lambda_\alpha$ and inclusion $i_{B_\alpha}:B_\alpha\hookrightarrow \dot T^*Z$.
The fiber of $p_\alpha$ is diffeomorphic to $\RR_{>0}\times \RR^k$, where $k=\rank D_\nu$.
The factor $\RR_{>0}$ is the positive conic radial direction and has its standard orientation.
Since the vertical tangent bundle of $D_\nu\to \Lambda_\alpha$ is the pullback of $D_\nu$, one obtains
\[
  \ori_{B_\alpha}\simeq
  p_\alpha^{-1}\bigl(\ori_{\Lambda_\alpha}\otimes \ori_{D_\nu}\bigr).
\]

Let $\pi_Z:T^*Z\to Z$ be the projection and put
\[
  T(T^*Z/Z):=\ker(d\pi_Z).
\]
Kashiwara--Schapira's description of $\mathcal L_Z$ on a smooth conic Lagrangian branch gives
\[
  p_\alpha^{-1}\mathfrak o_{\Lambda_\alpha,\nu}
  \simeq
  i_{B_\alpha}^{-1}\underline{\Gamma}_{B_\alpha}(\mathcal L_Z)
  \simeq
  \ori_{B_\alpha}\otimes \ori_{T^*Z/Z}|_{B_\alpha}.
\]
The polarized $\nu$-realization includes the following compatibility along $B_\alpha$.
Let $\RR_\rho\subset T(T^*Z/Z)|_{B_\alpha}$ be the radial line, oriented by the positive radial coordinate.
If $P$ denotes the Lagrangian distribution representing the chosen stable polarization of $W$, then there is a stable identification
\[
  T(T^*Z/Z)|_{B_\alpha}/\RR_\rho
  \simeq_{\mathrm{st}}
  p_\alpha^{-1}\bigl(P|_{\Lambda_\alpha}\oplus D_\nu^\vee\bigr).
\]
Taking determinant orientation local systems and using the canonical orientation of $\RR_\rho$ yields
\[
  \ori_{T^*Z/Z}|_{B_\alpha}
  \simeq
  p_\alpha^{-1}\bigl(\ori_{P|_{\Lambda_\alpha}}\otimes \ori_{D_\nu^\vee}\bigr).
\]
Combining the preceding displays gives
\[
  \mathfrak o_{\Lambda_\alpha,\nu}
  \simeq
  \ori_{\Lambda_\alpha}\otimes \ori_{D_\nu}
  \otimes \ori_{P|_{\Lambda_\alpha}}\otimes \ori_{D_\nu^\vee}\simeq\ori_{\Lambda_\alpha}\otimes \ori_{P|_{\Lambda_\alpha}}.
\]
Restriction to top-dimensional strata defines a morphism of sheaves
\[
  \rho_\Lambda:\mathcal Z_{\Lambda,\nu}\longrightarrow
  \bigoplus_{\alpha\in A_{\mathrm{top}}}(j_\alpha)_*\mathfrak o_{\Lambda_\alpha,\nu}.
\]
This morphism is injective.
Indeed, a Lagrangian cycle in $\dot T^*Z$ has dimension $\dim Z$; if its restriction to every smooth top-dimensional branch $B_\alpha$ vanishes, then its support is contained in the preimage of the union of lower-dimensional strata of $\Lambda$, which has no local $\dim Z$-dimensional Lagrangian cycle component.
Hence the cycle is zero.
The image of $\rho_\Lambda$ consists of those tuples $(s_\alpha)_\alpha$ satisfying the local cycle condition: in every local top-dimensional chart, the corresponding local Borel--Moore chain has vanishing boundary along the lower-dimensional strata.

A local generator is only available after a local brane trivialization.
Let $U\subset \Lambda_\alpha$ be an open set on which the polarized brane data is trivialized.
Such a trivialization determines a generator
\[
  \epsilon_U:\underline{\ZZ}_U\xrightarrow{\sim}\mathfrak o_{\Lambda_\alpha,\nu}|_U
\]
and a compatible microstalk functor, denoted $\mu_U$.
Changing the local trivialization gives (difference of) Maslov data $\gamma\in\pi_1(\mathrm{LGr}(\RR^{2n}))$. As oriented Lagrangian subspaces are parametrized by $U(n)/SO(n)$, the double cover $U(n)/SO(n)\to U(n)/O(n)$ induces the action of $\gamma$ on the orientation $\mathfrak{or}_{\Lambda}$ as $$\pi_1(\mathrm{LGr}(\RR^{2n}))=\pi_1(U(n)/O(n))\to\ZZ/2.$$
Therefore, the shift of microstalk as in \cite[Prop.~7.5.3]{KS90} is of same parity as the change of orientation, so the product $\chi(\mu_U(-))\epsilon_U$ is an invariant object.

\begin{prop}\label{prop:cc-generic-multiplicity}
For any open subset $V\subset \Lambda$ and any $\cF\in \mush^c_\Lambda(V)$, the image of $\CC_{\Lambda,\nu}([\cF])|_V$ under $\rho_\Lambda$ is a family
\[
  \bigl(c_{\alpha,V}(\cF)\bigr)_{\alpha\in A_{\mathrm{top}}},
  \qquad
  c_{\alpha,V}(\cF)\in \Gamma(V\cap \Lambda_\alpha,\mathfrak o_{\Lambda_\alpha,\nu}).
\]
Moreover, for every open set $U\subset V\cap \Lambda_\alpha$ with a local brane trivialization as above, one has
\[
  c_{\alpha,V}(\cF)|_U
  =
  \chi\bigl(\mu_U(\cF)\bigr)\,\epsilon_U.
\]
\end{prop}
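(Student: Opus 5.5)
The first assertion is formal. Since $\CC_{\Lambda,\nu}([\cF])\in\Gamma(V,\mathcal Z_{\Lambda,\nu})$ by \cref{thm:cc-global-ambient}, applying the sheaf morphism $\rho_\Lambda$ gives components $c_{\alpha,V}(\cF)\in\Gamma(V\cap\Lambda_\alpha,\mathfrak o_{\Lambda_\alpha,\nu})$. The content is the multiplicity formula. Both sides are sections of a locally constant sheaf of rank one on $U$, so the formula can be checked stalkwise. Moreover, both sides are compatible with restriction to smaller opens $U'\subset U$, using the restricted trivialization. So I will fix a point $x\in U$ and prove equality in the stalk at $x$.

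\textbf{Step 1: reduce to one ambient representative.} Choose, as in \cref{thm:cc-global-ambient}, a neighbourhood $V_i\ni x$ inside $U$ and a representative $G\in\Sh^c(Z)$ on a conic open $\Omega\supset q^{-1}(i_{\Lambda,\nu}(V_i))$, with $\ss(G)\cap\Omega\subset\widehat\Lambda_\nu$. Over $V_i\cap\Lambda_\alpha$, the set $\widehat\Lambda_\nu$ near $B_\alpha$ is the smooth conic Lagrangian $B_\alpha$. After shrinking $\Omega$, $\ss(G)\cap\Omega$ is contained in this smooth branch. The germ of $\CC(G)$ there is then a section of $i_{B_\alpha}^{-1}\underline\Gamma_{B_\alpha}(\mathcal L_Z)\simeq p_\alpha^{-1}\mathfrak o_{\Lambda_\alpha,\nu}$. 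Since the fibers of $p_\alpha$ are connected, this section descends to $c_{\alpha,V}(\cF)$ near $x$.

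\textbf{Step 2: apply Kashiwara--Schapira's local multiplicity formula.} For a constructible sheaf whose microsupport near a point $p$ is a smooth conic Lagrangian $B$, \cite[Ch.~IX]{KS90} (Thm.~9.4.2 and Prop.~9.4.3, via \cite[Prop.~7.5.3]{KS90}) gives the coefficient of $\CC(G)$ at $p$. It equals $\chi$ of the microstalk of $G$ at $p$, computed through a transverse test function $\psi$ with $d\psi(z)=p$ and $\Lambda_{d\psi}$ transverse to $B$. Equivalently, it is $\chi\bigl(R\Gamma_{\{\psi\ge0\}}(G)_z\bigr)$, multiplied by the orientation generator of $\ori_B\otimes\ori_{T^*Z/Z}$ determined by the chosen graph of $d\psi$. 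Changing $\psi$ shifts the microstalk by the Maslov index $\tfrac12\tau$, and it changes the orientation generator by the sign $(-1)^{\tau/2}$, so the product does not depend on $\psi$.

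\textbf{Step 3: match with the brane trivialization.} This is where I expect the main difficulty. A local brane trivialization on $U$ gives two things: the microstalk functor $\mu_U$ on $\mush^c_\Lambda$, which by the Nadler--Shende construction is the ambient microstalk along $B_\alpha$ for a transverse Lagrangian built from $P|_{\Lambda_\alpha}\oplus D_\nu^\vee$ and the radial line, and the generator $\epsilon_U$ obtained from the identifications $\ori_{B_\alpha}\simeq p_\alpha^{-1}(\ori_{\Lambda_\alpha}\otimes\ori_{D_\nu})$ and $\ori_{T^*Z/Z}\simeq p_\alpha^{-1}(\ori_P\otimes\ori_{D_\nu^\vee})$. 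I would choose the test function $\psi$ so that its graph realizes this transverse Lagrangian. Then $\mu_U(\cF)$ is the Kashiwara--Schapira microstalk up to a fixed shift, and the local generator of Step 2 is exactly $\epsilon_U$. The delicate part is tracking signs through the stable identification and the $D_\nu\otimes D_\nu^\vee$ cancellation. The invariance argument stated just before the proposition (Maslov data changes shift parity and orientation by the same element of $\ZZ/2$) handles any leftover discrepancy. It is therefore enough to check a single model case, $G=k_{\{\psi'\ge0\}}$ for a half-space in a chart, where both sides equal $\pm1$ and the conventions can be matched directly. Additivity on $K_0$ and the fact that every object is locally a successive extension of such models then give $c_{\alpha,V}(\cF)|_U=\chi(\mu_U(\cF))\epsilon_U$.
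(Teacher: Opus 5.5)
Your proposal is correct and is essentially the paper's argument. Both take an ambient constructible representative of $\cF|_U$ whose microsupport near $B_U$ lies on the smooth branch, apply the Kashiwara--Schapira generic multiplicity formula there (with the branch generator corresponding to $p_\alpha^{-1}\epsilon_U$ and the microstalk normalized by the same brane trivialization), and descend along the contractible fibers of $p_\alpha$. Your Step 3 model-case check goes slightly beyond the paper, which simply takes the compatibility of $\epsilon_U$ with $\mu_U$ as part of the trivialization. One caveat there: a half-space sheaf models $B_\alpha$ only where $B_\alpha$ is locally a conormal to a hypersurface. In general you should test on an arbitrary simple sheaf along $B_\alpha$, which is legitimate because the local category is $\Perf(k)$.
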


\begin{proof}

Fix $\alpha\in A_{\mathrm{top}}$.
The assertion is local on $V\cap \Lambda_\alpha$, so choose an open subset $U\subset V\cap \Lambda_\alpha$ on which the polarized brane data is trivialized.
Set
\[
  B_U:=p_\alpha^{-1}(U)\subset B_\alpha.
\]
After shrinking $U$ if necessary, choose a conic open neighborhood $\Omega\subset \dot T^*Z$ of $B_U$ and a constructible representative $G\in \Sh^c(Z)$ of $\cF|_U$ in the Nadler--Shende ambient model, with
\[
  \ss(G)\cap \Omega\subset \widehat\Lambda_\nu.
\]
The chosen local brane trivialization gives a local branch generator
\[
  e_{B_U}\in
  \Gamma\bigl(B_U,i_{B_U}^{-1}\underline{\Gamma}_{B_U}(\mathcal L_Z)\bigr)
\]
corresponding to $p_\alpha^{-1}\epsilon_U$ under the identification
\[
  i_{B_U}^{-1}\underline{\Gamma}_{B_U}(\mathcal L_Z)
  \simeq p_\alpha^{-1}\mathfrak o_{\Lambda_\alpha,\nu}|_U.
\]
The generic multiplicity formula for the Kashiwara--Schapira characteristic cycle on the smooth conic Lagrangian branch $B_U$ gives
\[
  \CC(G)|_{B_U}
  =
  \chi\bigl(\mu_{B_U}(G)\bigr)\,e_{B_U};
\]
see \cite[Ch.~IX]{KS90}.
Here $\mu_{B_U}(G)$ is the microstalk local system of $G$ along $B_U$, normalized by the same local brane trivialization.
By the definition of $\mush^c_\Lambda$ through the $\nu$-thickened ambient model, $\mu_{B_U}(G)$ is the pullback of the microstalk local system $\mu_U(\cF)$ on $U$.
Thus
\[
  p_\alpha^{-1}c_{\alpha,V}(\cF)|_U
  =
  p_\alpha^{-1}\bigl(\chi(\mu_U(\cF))\,\epsilon_U\bigr).
\]
Since $p_\alpha:B_U\to U$ has contractible fibers, pullback by $p_\alpha$ is faithful on local systems.
Hence
\[
  c_{\alpha,V}(\cF)|_U
  =
  \chi\bigl(\mu_U(\cF)\bigr)\,\epsilon_U.
\]
\end{proof}

We immediately have the following corollary.

\begin{cor}\label{cor:cc-independent-embedding}
For a fixed stable Lagrangian-normal datum $\nu$, the local systems $\mathfrak o_{\Lambda_\alpha,\nu}$, the subsheaf
\[
  \mathcal Z_{\Lambda,\nu}\subset
  \bigoplus_{\alpha\in A_{\mathrm{top}}}(j_\alpha)_*\mathfrak o_{\Lambda_\alpha,\nu},
\]
and the morphism
\[
  \CC_{\Lambda,\nu}:K_0^{\mathrm{pre}}(\mush^c_\Lambda)\longrightarrow \mathcal Z_{\Lambda,\nu}
\]
are independent of the auxiliary contact embedding used to realize $\nu$, under the canonical identifications supplied by the Nadler--Shende construction.
\end{cor}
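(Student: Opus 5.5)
The plan is to exploit the fact that all three objects in \cref{cor:cc-independent-embedding} are determined by data intrinsic to $(\Lambda,\nu)$, and that \cref{prop:cc-generic-multiplicity} pins down $\CC_{\Lambda,\nu}$ completely by intrinsic microstalk data. Concretely, take two auxiliary contact embeddings $\iota:W\times\RR_z\hookrightarrow S^*Z$ and $\iota':W\times\RR_z\hookrightarrow S^*Z'$ realizing the same $\nu$. The Nadler--Shende construction provides a canonical equivalence of sheaves of categories $\mush^c_\Lambda\simeq\mush^{c\,\prime}_\Lambda$ on $\Lambda$. It does this by stabilizing: both embeddings become isotopic after passing to $Z\times\RR^a$ and $Z'\times\RR^b$, and the stabilization $G\mapsto G\boxtimes k_{\RR^a}$-type kernels intertwine the microlocal categories. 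I will first check that these stabilization equivalences carry characteristic cycles to characteristic cycles. On the cotangent side, $\CC(G\boxtimes k_{\{0\}\times\dots})=\CC(G)\times\CC(\text{standard factor})$ by the product formula for Kashiwara--Schapira cycles. Moreover, a contact isotopy acts on $\CC$ by transporting cycles, via the GKS kernel and the functoriality of $\CC$ under quantized contact transformations, \cite[Ch.~IX]{KS90}. This produces a canonical isomorphism $\mathcal Z_{\Lambda,\nu}\simeq\mathcal Z'_{\Lambda,\nu}$.

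Rather than tracking this isomorphism in full, I will reduce the question to the top strata, using the injection $\rho_\Lambda$. First, on each $\Lambda_\alpha$, $\alpha\in A_{\mathrm{top}}$, the computation preceding \cref{prop:cc-generic-multiplicity} gives $\mathfrak o_{\Lambda_\alpha,\nu}\simeq\ori_{\Lambda_\alpha}\otimes\ori_{P|_{\Lambda_\alpha}}$. The right side involves only $\Lambda$ and the polarization $P$, and not $Z$. I must verify that the identification induced by a change of embedding agrees with the composite of these two intrinsic isomorphisms. It suffices to check this on local generators $\epsilon_U$ attached to a brane trivialization over $U$, since both sides are local systems. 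Second, the image of $\rho_\Lambda$ is cut out by the local cycle condition along lower strata. That condition is a condition on Borel--Moore chains on $\Lambda$ with coefficients in these local systems, so it is also intrinsic. This identifies the subsheaves.

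Third, for the morphism $\CC_{\Lambda,\nu}$ itself, injectivity of $\rho_\Lambda$ and \cref{prop:cc-generic-multiplicity} give $\rho_\Lambda\CC_{\Lambda,\nu}([\cF])|_U=\chi(\mu_U(\cF))\,\epsilon_U$. The microstalk functor $\mu_U$ is defined through the brane trivialization and is preserved by the canonical equivalence $\mush^c_\Lambda\simeq\mush^{c\,\prime}_\Lambda$ (up to the same shift/orientation parity cancelled in the product). So the two characteristic cycles agree under the identifications.

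I expect the main obstacle to be the first step: showing that the generator $\epsilon_U$ defined via one embedding is sent to the one defined via the other. This amounts to checking that the stable identification $T(T^*Z/Z)|_{B_\alpha}/\RR_\rho\simeq_{\mathrm{st}}p_\alpha^{-1}(P\oplus D_\nu^\vee)$ is compatible with stabilization and isotopy of embeddings. My first approach is a homotopy argument: the space of polarized contact embeddings realizing $\nu$ is connected after stabilization, so the induced isomorphism of rank-one $\ZZ$-local systems is locally constant along a path. It is therefore determined at one point, where it is the identity by the product formula.
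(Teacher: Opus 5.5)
Your proposal is correct and follows essentially the paper's route. The paper states the corollary as immediate from \cref{prop:cc-generic-multiplicity}, using three facts: the intrinsic description $\mathfrak o_{\Lambda_\alpha,\nu}\simeq\ori_{\Lambda_\alpha}\otimes\ori_{P|_{\Lambda_\alpha}}$, the injectivity of $\rho_\Lambda$ with image cut out by the local cycle condition, and the top-stratum formula $\chi(\mu_U(\cF))\,\epsilon_U$. This is exactly your reduction to the top strata. Your stabilization/GKS discussion and the connectedness argument for matching the generators $\epsilon_U$ are extra justification, which the paper leaves implicit in the phrase ``under the canonical identifications supplied by the Nadler--Shende construction''.
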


\subsection{The characteristic-cycle construction for $\fL$}

Recall from \cref{sec:setup} that $U$ is a Weinstein neighborhood of the RSTZ skeleton
\[
  \fL=(\Lambda\times S^1)\cup_{\partial\Lambda\times S^1\times I}(\partial\Lambda\times D_\disk).
\]
We fix the stable Lagrangian-normal datum $\nu_\fL$ coming from the chosen stably polarized Weinstein structure on $U$. The skeleton $\fL\subset U$ satisfies the assumptions of \cref{subsec:cc-global-ambient}, hence we have the morphism of presheaves on $\fL$
\[
  \CC_{\fL,\nu_\fL}:K_0^{\mathrm{pre}}(\mush^c_\fL)\longrightarrow \mathcal Z_{\fL,\nu_\fL}.
\]
We now fix the coefficient conventions on each top-dimensional capped branch. Recall that $P:=T_{\mathrm{pol}}U$ is the Lagrangian distribution chosen in \cref{subsec:polarization}. The construction of FLTZ skeleton induces a stratification with conic strata. For a top-dimensional stratum $\Lambda_\alpha$, we define a reference coefficient system $\mathfrak r_\alpha$ on $\fL_\alpha$ by
\[
  \mathfrak r_\alpha|_{\Lambda_\alpha\times S^1}
  =
  \mathfrak o_{\Lambda_\alpha}\boxtimes \mathfrak o_{0_{S^1}},
  \qquad
  \mathfrak r_\alpha|_{\Lambda_\alpha^\infty\times D_{\mathrm{disk}}}
  =
  \mathfrak o_{\Lambda_\alpha^\infty}\boxtimes \mathfrak o_{0_{D_\disk}}.
\]
Here $\mathfrak o_{\Lambda_\alpha}=\mathcal L_Q|_{\Lambda_\alpha}$ is the Kashiwara--Schapira coefficient local system of the FLTZ branch, $\mathfrak o_{\Lambda_\alpha^\infty}$ is its radial descent to the Legendrian boundary, and $\mathfrak o_{0_{S^1}}$, $\mathfrak o_{0_{D_\disk}}$ are the coefficient systems of the zero sections in $T^*S^1$ and $T^*D_\disk$. The two product systems are glued on the collar by the zero section collar identification
\[
  0_{D_\disk}|_{S^1\times I}=0_{S^1}\times 0_I.
\]
We now define a comparison
\[
  \theta_{\alpha,P}:
  \mathfrak r_\alpha
  \congto
  \ori_{\sL_\alpha}\otimes\ori_{P|_{\sL_\alpha}}
  \simeq
  \mathfrak{o}_{\sL_\alpha,\nu_\sL}  
\] by declaring isomorphisms on an open cover $\sL_{\alpha,i},i\in{1,2,12}$, where
\[
  \fL_{\alpha,1}\subset\Lambda_\alpha\times S^1\text{: away from conic end or }\varepsilon_2<t<\varepsilon;
\]\[
  \fL_{\alpha,2}\subset\Lambda_\alpha^\infty\times D_\disk\text{: away from conic end or }t<\varepsilon_1;
\]\[
  \fL_{\alpha,12}=
  \Lambda_\alpha^\infty\times S^1\times (\varepsilon_1-\delta,\varepsilon_2+\delta)_t,
\]
as in the construction of $P$, for a small enough $\delta$.
The gluing of such isomorphisms is guarenteed by the construction on $\sL_{\alpha,12}$.
Let
\[
  V_\theta:=T_{\mathrm{vert}}(T^*S^1)=\RR\partial_{p_\theta},
  \qquad
  V_t:=T_{\mathrm{vert}}(T^*(\varepsilon_1,\varepsilon_2))=\RR\partial_{p_t},
  \qquad
  H_t:=\RR\partial_t.
\]
On $\fL_{\alpha,1}$ we have $P|_{\fL_{\alpha,1}}
  =T_{\mathrm{pol}}U_\Lambda\oplus V_\theta$ by construction. Then \[
  \mathfrak o_{\Lambda_\alpha}\boxtimes \mathfrak o_{0_{S^1}}
  \simeq
  \ori_{\Lambda_\alpha\times S^1}
  \otimes
  (\ori_{T_{\mathrm{pol}}U_\Lambda}\boxtimes\ori_{V_\theta})
  \simeq
  \ori_{\sL_{\alpha,1}}\otimes\ori_{P|_{\sL_{\alpha,1}}}.
\]
On $\fL_{\alpha,2}$ we have similarly
\[
  \mathfrak o_{\Lambda_\alpha^\infty}\boxtimes \mathfrak o_{0_{D_\disk}}
  \simeq
  \ori_{\Lambda_\alpha^\infty\times D_\disk}
  \otimes
  \ori_{T_{\mathrm{vert}}F\oplus T_{\mathrm{vert}}(T^*D_\disk)}
  \simeq
  \ori_{\sL_{\alpha,2}}\otimes\ori_{P|_{\sL_{\alpha,2}}}.
\]
On $\sL_{\alpha,12}$ the chosen polarization is
\[
  P|_{\fL_{\alpha,12}}
  =T_{\mathrm{vert}}F\oplus \ell\oplus V_\theta,
\]
where
\[
  \ell|_{t\in[\varepsilon_1,\varepsilon_2]}=\RR v(t),
  \qquad
  v(t)=
  \frac{t-\varepsilon_1}{\varepsilon_2-\varepsilon_1}\partial_t
  +
  \frac{\varepsilon_2-t}{\varepsilon_2-\varepsilon_1}\partial_{p_t}.
\]
We orient $\ell$ by declaring $v(t)$ to be positive. This orientation gives isomorphisms
\[
  R_\ell^H:\ori_{H_t}\xrightarrow{\sim}\ori_\ell,
  \qquad [\partial_t]\longmapsto [v(t)];
\]
\[
  R_\ell^V:\ori_{V_t}\xrightarrow{\sim}\ori_\ell,
  \qquad [\partial_{p_t}]\longmapsto [v(t)].
\]
Near the conic end, radial descent gives
\[
  \mathfrak o_{\Lambda_\alpha}|_{\Lambda_\alpha^\infty\times I}
  \simeq
  \mathfrak o_{\Lambda_\alpha^\infty}\boxtimes \mathfrak o_{I,H},
  \qquad
  \mathfrak o_{I,H}:=\ori_I\otimes\ori_{H_t},
\]
where the factor \(\mathfrak o_{I,H}\) is canonically trivialized by the
positive radial coordinate.  On the disk side,
\[
  \mathfrak o_{0_{D_\disk}}|_{S^1\times I}
  \simeq
  \mathfrak o_{0_{S^1}}\boxtimes\mathfrak o_{I,V},
  \qquad
  \mathfrak o_{I,V}:=\ori_I\otimes\ori_{V_t},
\]
and \(\mathfrak o_{I,V}\) is trivialized by the standard zero-section
orientation of \(0_I\subset T^*I\). The gluing identifies these
two trivialized interval factors. The chosen positive vector \(v(t)\in \ell\) induces
\[
  \id\otimes R_\ell^H:
  \mathfrak o_{I,H}\to \ori_I\otimes\ori_\ell;
\]\[
  \id\otimes R_\ell^V:
  \mathfrak o_{I,V}\to \ori_I\otimes\ori_\ell.
\]
Both send the chosen positive interval generator to
\([\partial_t]\otimes[v(t)]\). Hence the comparisons on $\sL_{\alpha,1}$ and $\sL_{\alpha,2}$ are compatible with $\sL_{\alpha,12}$ respectively.

\subsection{A cap formula for zero section objects}
\label{sec:cap}
For the mirror Lagrangian skeleton
\[
  i_\Lambda:\Lambda\hookrightarrow T^*Q,
\]
recall that the global section of the characteristic cycle map
\[
  \CC_\Lambda:K_0^{\mathrm{pre}}(\mush^c_\Lambda)\longrightarrow \mathcal Z_\Lambda:=i_\Lambda^{-1}\underline{\Gamma}_{\Lambda}(\mathcal L_Q)
\]
is the classical characteristic cycle under the equivalence $\mush^c_\Lambda(\Lambda)\simeq \Sh^c_\Lambda(Q)$.

Restriction to top-dimensional strata gives an injective morphism
\[
  \rho_\Lambda:\mathcal Z_\Lambda\hookrightarrow
  \bigoplus_{\alpha\in A_{\mathrm{top}}}(j_\alpha)_*\mathfrak o_{\Lambda_\alpha},
  \qquad
  \mathfrak o_{\Lambda_\alpha}:=\mathcal Z_\Lambda|_{\Lambda_\alpha}.
\]
Here $A_{\mathrm{top}}$ indexes the connected top-dimensional branches. For every $\alpha\in A_{\mathrm{top}}$, we have a morphism $$s_\alpha\mapsto s_\alpha^\infty:\qquad \Gamma(\Lambda_\alpha,\mathfrak o_{\Lambda_\alpha})\to \Gamma(\Lambda_\alpha^\infty,\mathfrak o_{\Lambda_\alpha^\infty})$$ induced by
\[
  \mathfrak o_{\Lambda_\alpha}|_{\RR_{>0}\cdot\Lambda_\alpha^\infty}
  \simeq
  \pi_{\alpha,\infty}^{-1}\mathfrak o_{\Lambda_\alpha^\infty},
  \qquad
  \pi_{\alpha,\infty}:\RR_{>0}\cdot\Lambda_\alpha^\infty\to\Lambda_\alpha^\infty.
\]

We first define the reference cap on a single top-dimensional branch. For
$
  s_\alpha\in\Gamma(\Lambda_\alpha,\mathfrak o_{\Lambda_\alpha}),
$
let
\[
  \operatorname{cap}_\alpha(s_\alpha)
  \in
  \Gamma(\fL_\alpha,\mathfrak r_\alpha)
\]
be the section characterized by
\[
  \operatorname{cap}_\alpha(s_\alpha)|_{\Lambda_\alpha\times S^1}
  =
  s_\alpha\boxtimes [0_{S^1}],
\]
and
\[
  \operatorname{cap}_\alpha(s_\alpha)|_{\Lambda_\alpha^\infty\times D_\disk}
  =
  s_\alpha^\infty\boxtimes [0_{D_\disk}].
\]
These two restrictions agree on the collar by the definition of $\mathfrak r_\alpha$. Define the actual capped coefficient by
\[
  -\theta_{\alpha,P}
  \bigl(\operatorname{cap}_\alpha(s_\alpha)\bigr)
  \in
  \Gamma(\fL_\alpha,\mathfrak o_{\fL_\alpha,\nu_\fL}).
\]
Now let $c\in\mathcal Z_\Lambda(\Lambda)$ and write
\[
  \rho_\Lambda(c)=(c_\alpha)_{\alpha\in A_{\mathrm{top}}}.
\]
The family $-\theta_{\alpha,P}\bigl(\operatorname{cap}_\alpha(s_\alpha)\bigr)$ satisfies the local cycle condition on $\fL$. Indeed, it suffices to ckeck on $\Lambda\times S^1$ and $\Lambda^\infty\times D_{\mathrm{disk}}$ since the condition is local. In the sheaf $\mathfrak{r}_\alpha$ it is obtained from the top-dimensional chain representing $c$ by exterior product with the zero section cycle on $S^1$ over the cylindrical piece and with the zero section cycle on $D_\disk$ over the cap piece. The boundary operator satisfies Leibniz rule under exterior product, and the zero section cycles on $S^1$ and $D_\disk$ are closed. On the collar the two product descriptions agree by construction of $\mathfrak r_\alpha$. Hence the local boundary of the capped family is the capped image of the local boundary of $c$, which vanishes by assumptions. Since
\[
  \rho_\fL:\mathcal Z_{\fL,\nu_\fL}\hookrightarrow
  \bigoplus_{\alpha\in A_{\mathrm{top}}}[\fL_\alpha\hookrightarrow \fL]_*\mathfrak o_{\fL_\alpha,\nu_\fL}
\]
is injective with image cut out by the local cycle condition, there is a unique section
\[
  \operatorname{Cap}(c)\in\mathcal Z_{\fL,\nu_\fL}(\fL)
\]
such that
\[
  \rho_\fL\bigl(\operatorname{Cap}(c)\bigr)
  =
  -\theta_{\alpha,P}
  \bigl(\operatorname{cap}_\alpha(s_\alpha)\bigr).
\]
This defines a homomorphism
\[
  \operatorname{Cap}:\mathcal Z_\Lambda(\Lambda)\longrightarrow\mathcal Z_{\fL,\nu_\fL}(\fL).
\]

\begin{theorem}\label{thm:cap-formula}
Let $\iota:S\hookrightarrow \cX^\circ$ be the zero section inclusion and $\kappa_S,\kappa_{\cX^\circ}$ be the equivalences in \cref{sec:cat_eq}. For any $G\in\Coh(S)$%
, we have:
\[
  \CC_{\fL,\nu_\fL}\bigl(\kappa_{\cX^\circ}(\iota_*G)\bigr)=
  \operatorname{Cap}\Bigl(\CC_\Lambda\bigl(\kappa_S(G)\bigr)\Bigr)\in\mathcal{Z}_{\fL,\nu_\fL}(\fL).
\]
\end{theorem}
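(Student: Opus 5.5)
Since both sides are sections of $\mathcal Z_{\fL,\nu_\fL}$ and $\rho_\fL$ is injective, it suffices to compare them after restriction to each top-dimensional branch $\fL_\alpha$. By \cref{prop:cc-generic-multiplicity}, the left side restricted to $\fL_\alpha$ is $\chi(\mu_U(\cF))\epsilon_U$ on small trivialized opens $U$, where $\cF=\kappa_{\cX^\circ}(\iota_*G)$. The right side is $-\theta_{\alpha,P}(\operatorname{cap}_\alpha(c_\alpha))$, where $c_\alpha=\chi(\mu(\kappa_S(G)))\epsilon$ on $\Lambda_\alpha$, again by the classical generic multiplicity formula. The plan is therefore to reduce the theorem to two statements. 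First, the microstalks of $\cF$ on $\Lambda_\alpha\times S^1$ and on $\Lambda^\infty_\alpha\times D_\disk$ equal, up to a shift of odd parity, the microstalks of $\kappa_S(G)$ on $\Lambda_\alpha$ and on $\Lambda_\alpha^\infty$ respectively. Second, the local generators $\epsilon_U$ correspond under $\theta_{\alpha,P}$ to $\epsilon\boxtimes[0_{S^1}]$ and $\epsilon^\infty\boxtimes[0_{D_\disk}]$. The overall sign $-1$ should come from the odd shift.

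\textbf{Step 1: restrict $\cF$ to the two pieces of the cover.} Write $\iota=j\circ\iota_1$, where $\iota_1:S\hookrightarrow X=S\times\GG_m$ is $S\times\{1\}$ and $j:X\setminus D\cong\cX^\circ\setminus E$ is the open embedding. Since $S\times\{1\}$ meets $H$ only along $P$, the strict transform of $S\times\{1\}$ is the zero section. Under the pushout \cref{B-side-colim}, restriction of $\cF$ to $\fL_1$ corresponds to the right adjoint of $p^*[1]$ (or of the functor $\Coh(X)\to\Coh(\cX^\circ)$), so this restriction is $\iota_{1*}G$ up to shift. By \cref{prop:identify-corners} and the K\"unneth formula, $\kappa_X(\iota_{1*}G)=\kappa_S(G)\boxtimes k_{\{1\}}$, where $k_{\{1\}}$ is the skyscraper in $\Coh(\GG_m)$. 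The latter is mirror to the rank-one local system on $S^1$, whose microstalk along the zero section is $k$, so its characteristic cycle is $[0_{S^1}]$.

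On $\fL_2$, the restriction corresponds to $\Phi^\R(\iota_*G)=q_*r^*(\iota_*G)[-1]=G|_P[-1]$. This uses $S\cap E\cong P$, a transverse intersection, so no higher Tor terms appear. This object is mirror to $\kappa_P(G|_P)[-1]$. By Gammage--Shende (restriction is mirror to microlocalization, as in \cref{prop:compare-arrows}), $\kappa_P(G|_P)$ is the microlocalization of $\kappa_S(G)$ along $\partial\Lambda$, so its microstalks on $\Lambda^\infty_\alpha$ are those of $\kappa_S(G)$. On the $D_\disk$ factor it is the constant sheaf, with characteristic cycle $[0_{D_\disk}]$. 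The shift $[-1]$ produces the sign $-1$.

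\textbf{Step 2: consistency of the two pieces.} The multiplicities computed on the two pieces must agree on the collar. Because they are defined via $\mathfrak r_\alpha$ and $\theta_{\alpha,P}$, this is the orientation bookkeeping already done when gluing $\theta_{\alpha,P}$. What remains to check is that the brane trivialization used for $\mu_U$ on each piece matches the factorization $\mathfrak o_{\Lambda_\alpha}\boxtimes\mathfrak o_{0}$, i.e.\ that the Künneth equivalence is compatible with microstalks and generators.

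\textbf{Main obstacle.} I expect the hard part to be the sign and shift bookkeeping. One has to show that the same relative shift appears uniformly on both pieces and is compatible with the collar identification. The difficulty is that Step 1 naively gives a shift on $\fL_2$ but none on $\fL_1$, which would contradict the uniform sign in $\operatorname{Cap}$. To resolve this, I would carefully track the $[1]$ in $p^*[1]$ and the $[-1]$ in $k_\times$. The approach is to compute both restrictions via the cosheaf right adjoints consistently, and then check the case $S=\PP^1$, $G=\cO_S$ directly on the torus-plus-sphere picture of Figure~\ref{fig:P^1} to fix the global sign.
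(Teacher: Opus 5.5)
Your route is the paper's. You take right adjoints of the two legs of the pushout \eqref{eqn:B-side-colim} to compute $\cF|_{\fL_1}$ and $\cF|_{\fL_2}$. You identify these through the CCC: exterior product with the constant sheaf on $S^1$, and microlocal restriction to $\partial\Lambda$ tensored with the constant sheaf on $D_\disk$. You read the sign off an odd shift and glue via $\mathfrak r_\alpha$ and $\theta_{\alpha,P}$. Your appeal to \cref{prop:cc-generic-multiplicity} only makes explicit what the paper asserts directly.

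The step you leave open, and call the main obstacle, closes immediately. There is no inconsistency between the two pieces. The leg $\Coh(X)\to\Coh(\cX^\circ)$ is $p^*[1]$ followed by restriction to $\Bl_H X\setminus\tD$, so its right adjoint is $p_*[-1]$ applied after pushforward from the open set. The zero section of $\cX^\circ$ is the strict transform of $S\times\{1\}$, which is closed in $\Bl_H X$ and disjoint from $\tD$ (it is $\{U=0\}$ while $\tD=\{V=0\}$). Moreover $p\circ\iota=s$. Hence $\cF|_{\fL_1}\simeq\kappa_1(s_*G[-1])$.

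So $\fL_1$ carries the same shift $[-1]$ as $\fL_2$, where it comes from $\Phi^{\R}=q_*r^*[-1]$ as you computed. The uniform sign in $\operatorname{Cap}$ follows, and no $\PP^1$ sanity check is needed.

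Also, your factorization $\iota=j\circ\iota_1$ through $X\setminus D$ is wrong as written, since $S\times\{1\}$ meets $D$ along $H$. Replace it by the relation $p\circ\iota=s$ via the strict transform, which you state in the next sentence anyway.
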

\begin{proof}
Let
\[
  s:S\simeq S\times\{1\}\hookrightarrow X=S\times\GG_m
\]
be the closed embedding into the cylinder, and let $i_P:P\hookrightarrow S$ be the toric boundary divisor. Post-composing the B-side pushout square \eqref{eqn:B-side-colim} with $\iota^*$ and taking right adjoints gives the square
\[
\begin{tikzcd}[column sep=huge]
\Coh(D) & \Coh(H) \arrow[l, "i_*"] \\
\Coh(X) \arrow[u, "k^*"] & \Coh(S). \arrow[u, "{i_P^*[-1]}"] \arrow[l, "{s_*[-1]}"]
\end{tikzcd}
\]
Here the displayed adjoints are the closed-embedding duality functors already used in the construction of \eqref{eqn:B-side-colim}.

Set $\mathcal F:=\kappa_{\cX^\circ}(\iota_*G)\in\mush^w_\fL(\fL)$ and let $\kappa_1$ and $\kappa_2$ denote the equivalences on the two pieces $\fL_1=\Lambda\times S^1$ and $\fL_2=\partial\Lambda\times D_\disk$. The preceding square gives
\[
  \mathcal F|_{\fL_1}
  \simeq
  \kappa_1(s_*G[-1]),
  \qquad
  \mathcal F|_{\fL_2}
  \simeq
  \kappa_2(i_P^*G[-1]).
\]
Under the coherent-constructible correspondence, $s_*$ is mirrored by exterior product with the constant sheaf on $S^1$, and $i_P^*$ is mirrored by microlocal restriction from the FLTZ skeleton to its Legendrian boundary; see \cite[\S7.2]{shende-gammage}.

Put
\[
  c:=\CC_\Lambda(\kappa_S(G)),
  \qquad
  \rho_\Lambda(c)=(c_\alpha)_\alpha.
\]
The characteristic cycle is additive in $K_0$, hence the shift $[-1]$ multiplies every top-dimensional coefficient by $-1$. This proves $\CC_{\fL,\nu_\fL}\bigl(\kappa_{\cX^\circ}(\iota_*G)\bigr)$ restricts to the components
\[
  -c_\alpha\boxtimes [0_{S^1}],\qquad
  -c_\alpha^\infty\boxtimes [0_{D_\disk}].
\]
As discussed above, the components glue to a section
\[
  -\operatorname{cap}_\alpha(c_\alpha)
  \in
  \Gamma(\fL_\alpha,\mathfrak r_\alpha).
\]
Under the equivalence $\theta_{\alpha,P}:
  \mathfrak r_\alpha
  \xrightarrow{\sim}
  \mathfrak{o}_{\sL_\alpha,\nu_\sL},$ this matches our definition of $\operatorname{Cap}$.
\end{proof}

\subsection{Examples}
\label{subsec:cc-examples}

We spell out a few basic examples of the cap formula.  Let
\[
  d:=\dim_\C S,\qquad N:=\dim_\C K_S=d+1.
\]
For an object $G\in\Coh(S)$%
, we call
$\CC_{\fL,\nu_\fL}\bigl(\kappa_X(\iota_*G)\bigr)$
the mirror cycle of $G$ in $\fL$.

\begin{exam}
  For $G=\cO_S$, the characteristic cycle
  $\CC_\Lambda(\kappa_S(\cO_S))$ is the standard vector space $\bR^d$. Capping gives
  \[
    (\mathbb R^d\times S^1)\cup_{S^{d-1}\times S^1}(S^{d-1}\times D_\disk)
    \cong S^{d+1}=S^N.
  \]
  Thus the mirror cycle of $\cO_S$ is homeomorphic to $S^N$.
\end{exam}

\begin{exam}
  Let $p\in S$ be a point and take $G=\cO_p$.  Under the
  coherent-constructible correspondence, $\kappa_S(\cO_p)$ has characteristic
  cycle supported on the zero section torus $Q\simeq (S^1)^d\subset\Lambda$.
  This branch has no boundary at infinity, so the cap operation is simply
  product with the extra circle:
  \[
    Q\times S^1\simeq (S^1)^{d+1}=(S^1)^N.
  \]
  Hence the mirror cycle of a point sheaf is homeomorphic to $(S^1)^N$.
\end{exam}

\begin{exam}
  Suppose $\dim_\C S=2$, so that $K_S$ is a toric Calabi--Yau threefold.
  For any line bundle $G$ on $S$, the capped FLTZ cycle above is the same cycle
  attached to the corresponding $K$-theoretic framing $\mathrm{mir}^c(G)$ in \cite{FLYZ-remodeling-descendants}, i.e. $\CC_{\fL,\nu_\fL}\bigl(\kappa_X(\iota_*G)\bigr)=\mathrm{mir}^c(G)$. We omit the details of the verification here. Therefore, with the correct choice of an actual orientation, we have (\cite[Thm.~6.8]{FLYZ-remodeling-descendants}, Hosono's conjecture \cite{Hos06})
  \[
  \int_{\CC_{\fL,\nu_\fL}\bigl(\kappa_X(\iota_*G)\bigr)} \Omega = Z^c(G),
  \]
  where $\Omega$ is the Calabi--Yau form on $\cY$, and $Z^c(G)$ is the \emph{Gromov--Witten A-model central charge} defined using Gamma classes. Thus our mirror cycle matches the correspondence from Hosono's conjecture for line bundles supported on $S$. We expect $\CC_{\fL,\nu_\fL}\bigl(\kappa_X(\iota_*G)\bigr)=\mathrm{mir}^c(G)$ for all coherent sheaves $G$ supported on $S$ but did not carry out the computation for the remaining generating cases (line bundles supported on a toric line).
\end{exam}

\bibliographystyle{alpha}
\bibliography{HMS}

\end{document}